\setlist[enumerate,1]{label=(\alph*), ref=\thethm.(\alph*)}
\newcommand{\case}[1]{\subsubsection{\bf #1}\mbox{}}
\definecolor{greenbean}{RGB}{199,237,204}
\definecolor{blue}{rgb}{0,0,1}
\def\blue{\color{blue}}
\definecolor{red}{rgb}{1,0,0}
\definecolor{green}{rgb}{0,1,0}
\definecolor{gray}{rgb}{.5,.5,.5}
\definecolor{yellow}{rgb}{1,1,.4}
\definecolor{purple}{rgb}{1,0,1}
\definecolor{gold}{rgb}{.5,.5,.2}
\theoremstyle{plain}
\newtheorem{thm}{Theorem}[section]
\newtheorem{lemma}[thm]{Lemma}
\newtheorem{prop}[thm]{Proposition}
\newtheorem{cor}[thm]{Corollary}
\newtheorem{conj}[thm]{Conjecture}
\newtheorem*{mainthm}{Main Theorem}
\theoremstyle{definition}
\newtheorem{Def}[thm]{Definition}
\theoremstyle{remark}
\newtheorem{rmk}[thm]{Remark}
\newtheorem{ex}[thm]{Example}
\newcommand{\mult}{\mathrm{mult}}
\newcommand{\ord}{\mathrm{ord}}
\newcommand{\vol}{\mathrm{Vol}}
\newcommand{\ceil}[1]{\lceil #1 \rceil}
\newcommand{\floor}[1]{\lfloor#1\rfloor}
\newcommand{\mc}[1]{\mathcal{#1}}
\newcommand{\mb}[1]{\mathbb{#1}}
\newcommand{\mf}[1]{\mathfrak{#1}}
\renewcommand{\d}{\mathrm{d}}
\renewcommand{\O}{\mc{O}}
\newcommand{\im}{\mathrm{Im}}
\newcommand{\df}{\mathrm{def}}
\newcommand{\mld}{\mathrm{mld}}
\newcommand{\supp}{\mathrm{Supp}}
\newcommand{\diff}{\mathrm{Diff}^*}
\newcommand{\lsyst}[1]{\lvert#1\rvert}
\newcommand{\Exc}{\mathrm{Exc}}
\newcommand{\length}{\mathrm{length}}
\newcommand{\m}{\mathfrak{m}}
\newcommand{\IN}{\mathrm{in}}
\newcommand{\spec}{\mathrm{Spec}}
\newcommand{\hcell}[1]{\ifmeasuring@#1\else\omit$\displaystyle#1$\ignorespaces\fi}
\newcommand*\rel@kern[1]{\kern#1\dimexpr\macc@kerna}
\newcommand*\widebar[1]{%
  \begingroup
  \def\mathaccent##1##2{%
    \rel@kern{0.8}%
    \overline{\rel@kern{-0.8}\macc@nucleus\rel@kern{0.2}}%
    \rel@kern{-0.2}%
  }%
  \macc@depth\@ne
  \let\math@bgroup\@empty \let\math@egroup\macc@set@skewchar
  \mathsurround\z@ \frozen@everymath{\mathgroup\macc@group\relax}%
  \macc@set@skewchar\relax
  \let\mathaccentV\macc@nested@a
  \macc@nested@a\relax111{#1}%
  \endgroup
}
\renewcommand{\bar}{\widebar}
\renewcommand{\tilde}{\widetilde}
\title{On Fujita's Freeness Conjecture in Dimension 5}
\author{Fei Ye}
\address{Department of Mathematics and Computer Science, Queensborough Community College - CUNY, 222-05 56th Ave. Bayside, NY 11364}
\email{feye@qcc.cuny.edu}
\subjclass[2010]{14C20, 14F18, 14B05}
\keywords{Adjoint linear systems, minimal log discrepencies, log canonical centers}
\author{Zhixian Zhu}
\address{KIAS,
85 Hoegiro, Dongdaemun-gu,
Seoul 130-722,
Republic of Korea}
\email{zhixian@kias.re.kr}
\begin{document}
\maketitle
\begin{abstract}
Let $X$ be a smooth projective variety of dimension $5$ and $L$ be an ample line bundle on $X$. We show that $|K_X+6L|$ is base-point free.
\end{abstract}

\section{Introduction}

Let $k$ be an algebraically closed filed of characteristic $0$. We always denote by $X$ a smooth projective variety over $k$. Let $L$ be an ample line bundle on $X$. The pluricanonical linear systems $\lsyst{mK_X}$ and more generally adjoint linear systems $\lsyst{K_X+kL}$ play very important roles in describing the structure of the projective variety $X$ and in classifying of projective varieties. An important question about linear systems is when they are base-point free.  In \cite{Fujita1988}, Fujita raised the following conjecture.

\begin{conj}[Fujita's freeness conjecture]
Let $X$ be a smooth projective variety and $L$ be an ample line bundle on $X$. Then the adjoint linear system $\lsyst{K_X+kL}$ is base-point free if $k\geq \dim X+1$. \end{conj}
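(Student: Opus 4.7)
The plan is to attempt Fujita's conjecture via the standard multiplier ideal strategy pioneered by Kawamata, Ein-Lazarsfeld and Angehrn-Siu, with the aim of pushing their inductive scheme to the sharp bound $k \geq n+1$, where $n = \dim X$. Fix a point $x \in X$ at which we want to show $K_X + kL$ has a non-vanishing global section. The first step is the standard reduction: it suffices to build an effective $\mathbb{Q}$-divisor $D \sim_{\mathbb{Q}} cL$ with $c < k$ such that the pair $(X, D)$ is klt on a punctured neighborhood of $x$ and has $\{x\}$ as an isolated non-klt center. Once such a $D$ is produced, we write $K_X + kL \sim_{\mathbb{Q}} K_X + D + (k-c)L$ with $(k-c)L$ ample, and Nadel vanishing applied to $\mathcal{J}(X,D)$ gives the surjection
\[
H^0(X, K_X + kL) \twoheadrightarrow H^0\bigl(X, (K_X + kL) \otimes \mathcal{O}_X/\mathcal{J}(X,D)\bigr),
\]
whose target surjects onto the residue field at $x$.

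The second step is the production of an initial non-klt center. Using a Kodaira-type volume calculation, an effective $\mathbb{Q}$-divisor of the form $D_0 \in \tfrac{1}{m}|mL|$ with $\mult_x D_0 > n$ can be constructed provided $L^n > n^n$ (after possibly replacing $L$ by a multiple and rescaling coefficients at the end). Then the log canonical threshold of $(X,D_0)$ at $x$ is $< 1$, and scaling gives a non-klt pair through $x$ with small coefficient in $L$. Let $Z \ni x$ be the minimal non-klt center. If $\dim Z = 0$, tie-breaking isolates $Z = \{x\}$ and we are done. Otherwise, the inductive core is Kawamata-Helmke subadjunction: restrict the problem to $Z$, produce a new divisor on $Z$ of sufficient multiplicity at $x$ using the restricted ampleness $L|_Z$, and lift it back to $X$ via the extension theorem. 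Each induction step strictly decreases $\dim Z$ at the price of adding a controlled increment to the total coefficient $c$.

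The main obstacle, and the reason the general conjecture remains open, lies entirely in step three: controlling the accumulated coefficient $c$ across the $n$ inductive reductions so that $c < n+1$. The Angehrn-Siu estimate bounds $c \leq \tfrac{n(n+1)}{2} + 1$, which is quadratic rather than linear in $n$; refinements by Helmke and Kawamata improve the constants but retain superlinear growth. To reach the sharp bound one would need that at each inductive stage on a minimal non-klt center $Z$ of dimension $d$, the restricted bundle satisfies a sharp volume estimate of the form $(L|_Z)^d \geq (d+1)^d$ (or a suitable tangent-cone analogue at $x$), together with a lifting statement that loses only one unit of coefficient per dimension dropped. Establishing both ingredients beyond the cases already settled (up to $n = 4$ and, as this paper argues, $n = 5$) appears to require a genuinely new input; absent such input, the plan above can only be carried out in dimensions where these sharp per-step estimates are available, and the present work is precisely the execution of the plan in the next open case $n = 5$.
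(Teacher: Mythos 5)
The statement you were asked to prove is Fujita's conjecture in full generality, which this paper does not prove: it is stated as a conjecture, and the paper's own contribution (the Main Theorem) is the dimension-five case under the quantitative hypothesis $L^d\cdot Z\geq 6^d$. Your proposal correctly recognizes this. You do not claim a proof; instead you give an accurate account of the Nadel-vanishing / minimal-lc-center / inductive-cutting strategy of Angehrn--Siu, Kawamata, Helmke, and Ein, and you correctly identify the coefficient-accumulation problem as the reason the sharp bound $k\geq n+1$ is out of reach in general (the Angehrn--Siu bound is indeed quadratic in $n$). This is essentially the same framework the paper executes: produce a critical divisor $G\sim_{\mathbb{Q}}\lambda L$, cut down the minimal lc center $M(G)$ via Helmke's criterion, and control the quantity $\df_x(X,G)/(1-\lambda)$ against $6/\sqrt[d]{\mult_x Z'}$. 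The paper's new inputs--upper bounds on the deficit through $\mld_x$ of the center and the different (Proposition \ref{prop:Ein-Helmke}\ref{def-up-new}--\ref{def-pairs}), the surface-singularity bound $\mld_x\leq 2/m$, and the Weil-divisor volume estimate of Appendix \ref{sec:vol-Weil-div} used in the hard $\dim M(G)=3$ case--are exactly the kind of ``sharp per-step estimates'' your outline correctly says are needed and currently unavailable beyond low dimension. In short, your assessment is accurate and consistent with the paper; there is no gap to flag, because there is no general proof here to match against.
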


Up to dimension 4, the conjecture has been proved (see \cite{Reider1988}, \cite{Ein1993a}, \cite{Kawamata1997}). In general, some effective results were obtained (see for example \cite{Demailly1993}, \cite{Kollar1993}, \cite{Angehrn1995},  \cite{Helmke1997}, \cite{Helmke1999} and \cite{Heier2002}). 

In higher dimensions, a successful approach is to use Kawamata-Veihweg vanishing theorem and run inductions on dimensions of minimal log canonical centers.

 Fujita \cite{Fujita1993} also introduced a new technique: volumes of graded linear systems on divisors. The technique was effectively used in \cite{Kawamata1997}, \cite{Helmke1999}, \cite{Lee1999}, \cite{Kakimi2000}, \cite{Ye2014} etc..

The idea is roughly the following. Instead of considering $kL$, we assume that $\sqrt[d]{L^d\cdot Z}\geq\sigma_d\geq  \dim X+1$ for any subvarieities $Z\subseteq X$. Given a point $x\in X$, by asymptotic Riemann-Roch theorem, we know that there exists an effective $\mathbb{Q}$--Cartier divisor $G$ linearly equivalent to $\lambda L$ with $0\leq \lambda<1$ such that $(X, G)$ is log canonical at $x$ and the minimal log canonical center $M(G)$ passing through $x$ is a normal subvariety. If $M(G)$ is a single point, then by Kawamata-Viehweg vanishing theorem we know that $\lsyst{K_X+L}$ is free at $x$. However, in general, $\dim M(G)$ may be positive. We have to run descending inductions on dimension of $M(G)$ to overcome this major difficulty. One way is to prove effective freeness on this singular variety $M(G)$ as in \cite{Ein1993a}. But it is extremely hard in higher dimensions due to possible bad singularities of $M(G)$.  Another approach\textemdash the idea is originally due to Angehrn and Siu and adapted by Helmke and many others\textemdash  is to create a new $\mb{Q}$--Cartier divisor $G'$ from $G$ so that $(X, G')$ is still log canonical at $x$ and the minimal log canonical center $M(G')$ is properly contained in $M(G)$.  

The difficulty of creating $(X, G')$ from $(X, G)$ with the expected properties is measured by the deficit $\df_x(X, G)$ (see Definition \ref{Def:deficit}) which was introduced by Ein \cite{Ein1997} and Helmke \cite{Helmke1997} independently.  They  showed that $\df_x(X,G)$ is closely related to the multiplicity $\mult_x M(G)$ and the order of vanishing $\ord_x G$, and is bounded from above by the dimension $\dim M(G)$.  For example, it is known that  $\df_x(G)\leq \dim X-\ord_x(G)$ and 
\begin{equation}\label{Hineq}
\mult_xM(G)\leq {e-\ceil{\df_x(X, G)}\choose e-d},
\end{equation}
where $e$ is the embedding dimension of $M(G)$ at $x$. (see Proposition \ref{prop:Ein-Helmke} for more relations).    In \cite{Helmke1997}, it was proved that we can create a desired new pair $(X, G')$ if the following inequality 
\[\left(\dfrac{\df_x(X, G)}{1-\lambda}\right)^d\cdot \mult_x M(G)< L^d\cdot M(G)\]
holds. To prove Fujita's freeness conjecture, it is very important to obtain optimal bounds for the three variables: $\mult_xM(G)$, $\df_x(X, G)$ and $\lambda$, where $\lambda$ is  the rational number such that $G$ is $\mb{Q}$--linearly equivalent to $\lambda L$ and $(X, G)$ is log canonical at $x$. 

In \cite{Kawamata1997} and \cite{Helmke1999}, it was observed that $\df_x(X, G)\leq 1$ if $x\in M(G)$ is a surface singularity.  In \cite{Ye2014}, we showed that in fact $\df_x(X, G)$ is bounded above by a function $\alpha_{\dim M(G)}(m)$ of $m=\mult_x(M(G))$ which is determined by the inequality \eqref{Hineq}. In particular, $\alpha_{2}(m)=1$. 

 Another effective way to bound  $\df_x(X, G)$  is to increase $\ord_xG$. When the minimal log canonical center is a divisor, Fujita \cite{Fujita1993} introduced a new technique\textemdash volume calculation of graded linear systems on Cartier divisors in smooth varieties\textemdash to argue that one can indeed find an effective $\mb{Q}$-Cartier divisor $G'$ that is $\mb{Q}$-linearly equivalent to $\lambda'L$ such that  $\lambda'<\lambda$ and $M(G')=M(G)$. The technique was effectively used in \cite{Kawamata1997}, \cite{Helmke1999}, \cite{Lee1999}, \cite{Kakimi2000}, \cite{Ye2014} etc.. In order to prove Fujita's freeness conjecture in dimension $5$, we generalize the volume calculation on Cartier divisors to Weil divisors (see Section \ref{sec:increasing-order} and Appedix \ref{sec:vol-Weil-div}) which is crucial to handle the most difficult case in our proof (see Section \ref{sec:main-proof} \ref{subsec:most-difficult}). 

Inspired by subadjunction theorem (see \cite[Theorem 2.5]{Ambro1999a}), we generalize deficit function to  lc  pairs and obtain the following inequality \[\df_x(X, G)\leq \df_x(M(G), \diff_{M(G)}(G)),\] where $\diff_{M(G)}(G)$ is the different of the pair $(X, G)$ on $M(G)$ as defined in \cite[Definition 4.2]{Ambro1999a}. Moreover, we show that  $\df_x(M(G), \diff_{M(G)}(G))$ is less than the minimal log discrepancy $\mld_x(M(G), \diff_{M(G)}(G))$. In particular, in the case that $\dim X=5$ and $\dim M(G)=2$, we have that \[\mld_x(M(G))\leq \frac{2}{\mult_xM(G)}.\] In fact, this inequality holds for any rational surface singularities (see Appendix \ref{sec:mld-def-surfaces}).

By studying invariants of singularities of minimal log canonical centers and carefully calculating volume of graded linear systems whose fixed parts contain a fixed prime divisor, we prove that Fujita's freeness conjecture in dimension $5$.

\begin{mainthm}
Let $X$ be a smooth projective variety of dimension $5$ and $L$ be an ample line bundle on $X$ such that $L^d\cdot Z\geq 6^d$ for any subvariety $Z\subseteq X$ of dimension $d$.  Then the adjoint linear system $\lsyst{K_X+L}$ is base-point free.
\end{mainthm}

This paper is organized as follows. In section \ref{Sec-deficit}, we generalize the deficit function to a klt pair and discuss its upper bounds. In section \ref{sec:singularity-of-center}, we discuss multiplicities and minimal log discrepancies at points in minimal log canonical centers. In section \ref{sec:increasing-order}, we show that the order of vanishing of an ample effective divisor can be increased when its minimal log canonical center is a divisor.  In section \ref{sec:cutting-center}, we present various criteria for cutting down minimal log canonical centers. We prove our main theorem in section \ref{sec:main-proof}. In Appendix \ref{sec:vol-Weil-div}, we estimate the volume of a linear system whose fixed part containing a prime divisor. Appendix \ref{sec:mld-def-surfaces} contains a proof by Jun Lu showing that the minimal log discrepancy of a rational surface singularity of multiplicity $m$ is at most $\frac{2}{m}$.\\

\noindent \textbf{Acknowledgements.}

The authors would like to thank Lawrence Ein, S\'andor Kov\'acs, Tommaso de Fernex, Shihoko Ishii, Mircea Musta\c{t}\v{a}, Florin Ambro, Linquan Ma and Yifei Chen for answering our questions and helpful discussions.  The first named author was partially supported by HKU Small Project Fund. He would like to thank Professor Ngaiming Mok for his advice and support. 

\section{Upper bounds for the deficit function\label{Sec-deficit}}
Due to unavoidable singularities of minimal log canonical centers, in this section, we generalize the definition of the deficit function to a log pair and study its upper bounds.  We show that the minimal log discrepancy of the minimal log center is an upper bound for the deficit function. 

Let $Y$ be a normal variety over $k$ and $x$ a closed point. Let $B$ be an effective $\mb{Q}$-Weil divisor on $Y$.  We call the pair $(Y, B)$ a log pair if  the divisor $K_Y+B$ is $\mathbb{Q}$--Cartier. 
Let $\mu: \tilde{Y}\rightarrow Y$ be a proper birational morphism with $\tilde{Y}$ normal.  On $\tilde{Y}$, there is a uniquely determined divisor $B^{\tilde{Y}}$ such that  $K_{\tilde{Y}}+ B^{\tilde{Y}}=\mu^*(K_Y+B)$ and  $B^{\tilde{Y}}=\mu^{-1}(B)$ on $\tilde{Y}\setminus \Exc(\mu)$, where $\Exc(\mu)$ is the exceptional locus. Write 
\[B^{\tilde{Y}}=\sum\limits_{E\subset \tilde{Y}} (1-a(E; Y, B))E,\] where $E$ is a prime divisor and $a(E; Y, B)$ is a  rational number, called  the {\em log discrepancy} of $E$ with respect to $(Y, B)$. The center $C_Y(E)$ of a prime divisor $E$ over $Y$ is defined as the closure of its image in $Y$.

A {\em log resolution} of a log pair $(Y, B)$ is a projective birational morphism $\mu: \tilde{Y}\to Y$ with $\tilde{Y}$ smooth such that  $\supp(\Exc(\mu)+ \mu^{-1}(B))$ is simple normal crossing.

We recall the definition of log canonical centers and the existence of minimal log canonical centers for a log canonical pair.

\begin{Def}
A pair $(Y, B)$ is said to be {\em log canonical} (resp. {\em Kawamata log terminal}\footnote{We will abbreviate this as klt.}) at  $x\in Y$ if it is a log pair and $a(E; Y, B)\geq 0$ (resp. $a(E; Y, B)>0$) for a log resolution $\mu: \tilde{Y}\to Y$ and any prime divisor $E$ on $\tilde{Y}$ such that $x\in C_Y(E)$. If $(Y, B)$ is log canonical (resp. klt) at every closed point in $Y$, then we call $(Y, B)$ a {\em log canonical} (reps. {\em klt}) pair.

An irreducible subvariety of $Y$ is called a {\em log canonical center} of $(Y,B)$ if it is the center of a prime divisor $E$ over $Y$ such that $a(E; Y, B)\leq 0$. 
\end{Def}

The existence of the minimal log canonical center of a log canonical but not Kawamata log terminal pair at a closed point  is well-known (see for example \cite{Ein1993a}, \cite{Helmke1997}, \cite{Kawamata1997}). 

\begin{thm}
Let $(Y, B)$ be a pair which is log canonical but not klt at $x$. Then there exists a minimal log canonical center $M(B)$ of $(Y, B)$ at $x$. Moreover, the minimal log canonical center $M(B)$ is normal.
\end{thm}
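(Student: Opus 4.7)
The plan is to establish the theorem in two stages: first the existence of a unique minimal log canonical center through $x$, then its normality. Both parts go back to Kawamata, and I will essentially recast his argument.

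For existence, the key step is the \emph{intersection property}: if $W_1$ and $W_2$ are log canonical centers of $(Y,B)$ both passing through $x$, then there is a log canonical center $W \subseteq W_1 \cap W_2$ with $x \in W$. I would prove this by a tie-breaking perturbation. Pick a very ample divisor $H$ on $Y$ and, for $i=1,2$, choose an effective $\mathbb{Q}$-divisor $D_i \sim_{\mathbb{Q}} c_i H$ vanishing to high order along $W_i$ but not too badly along the rest of the non-klt locus. For sufficiently small $\varepsilon_i>0$ one can then arrange a perturbation $B' = (1-t)B + \sum \varepsilon_i D_i + tH'$ (with $H' \sim_{\mathbb{Q}} H$ general and $t>0$ small) whose log canonical locus at $x$ is contained in $W_1 \cap W_2$ while $(Y,B')$ is log canonical (but not klt) at $x$. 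Any lc center of $(Y,B')$ through $x$ is then an lc center of $(Y,B)$ contained in $W_1 \cap W_2$. Once the intersection property is known, the collection of lc centers of $(Y,B)$ containing $x$ is finite (they are images of components of the reduced non-klt locus on a fixed log resolution), hence admits a unique minimal element $M(B)$.

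For normality, I would use a combination of the connectedness lemma of Shokurov--Koll\'ar and subadjunction. Take a log resolution $\mu:\tilde Y \to Y$ and write $K_{\tilde Y} + B^{\tilde Y} = \mu^*(K_Y+B)$. Let $S$ be the reduced sum of those $\mu$-exceptional (or strict transform) prime divisors with log discrepancy zero whose center equals $M(B)$. After a small perturbation absorbing the other lc places (using the intersection property to guarantee $M(B)$ is minimal), the connectedness theorem applied to the morphism $\mu$ forces the fibers of $S \to M(B)$ to be connected over a neighborhood of $x$. Hence $M(B)$ is unibranch at $x$; combined with the standard fact that $\mu_*\mathcal O_S = \mathcal O_{M(B)}$ (obtained from Kawamata--Viehweg vanishing applied to $K_{\tilde Y} + S - \mu^*(K_Y+B)$), this yields normality of $M(B)$ at $x$.

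I expect the main obstacle to be the normality statement. The intersection/tie-breaking step is a purely formal perturbation argument, but normality genuinely requires a vanishing-theorem input plus the delicate point that after perturbation one still has a divisorial lc place whose center is exactly $M(B)$ and whose other lc places have been pushed off $x$. Getting this perturbation right, so that Kawamata--Viehweg applies and pushes down to give $\mu_* \mathcal O_S = \mathcal O_{M(B)}$ near $x$, is the technical heart of the argument.
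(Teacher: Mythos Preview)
The paper does not actually give a proof of this theorem: it states the result as well-known and cites \cite{Ein1993a}, \cite{Helmke1997}, and \cite{Kawamata1997}. Your proposal is precisely a sketch of the standard Kawamata argument from those references (intersection property via tie-breaking for existence, connectedness plus Kawamata--Viehweg vanishing for normality), so your approach is aligned with what the paper invokes; there is simply nothing in the paper itself to compare against beyond the citations.
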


\begin{rmk}\label{rmk:critical}
In general, there might be not only one prime divisor $E$ on $\tilde{Y}$ such that $C_Y(E)=M(G)$. However, by a tie-breaking technique (see for example \cite{Kollar1997}, \cite{Kawamata1997}, \cite{Lee1999}), i.e. replacing $G$ by $G'=(1-\varepsilon_1)G+\varepsilon_2 D$, where $\varepsilon_1$ and $\varepsilon_2$ are sufficiently small positive numbers and $D$ is an effective ample $\mb{Q}$--Cartier divisor on $Y$, we may assume that  $(Y, G')$ is log canonical at $x$ and there is only one prime divisor $E$ on $\tilde{Y}$ such that $C_Y(E)=M(G')=M(G)$ at $x$. 

Practically, in this paper, both $G$ and $D$ will be $\mb{Q}$--linearly equivalent to an ample line bundle $L$ on  $X$, and as seen in \cite{Ye2014} a small perturbation of $G$ doesn't affect the inductions, so in the rest of the paper, we may assume that there is only one prime divisor $E$ such that $M(G)=C_{\blue X}(E)$. In this case, $G$ is said to be critical at $x$ and $M(G)$ is called a critical variety by Ein (see \cite[Definition 2.4]{Ein1997}) and is an exceptional locus by Ambro (see \cite[Section 1.4]{Ambro1999a}). 
\end{rmk}

Let $G$ be an effective $\mathbb{Q}$--divisor on $X$. The multiplier ideal sheaf of $G$ is defined as 
$$\mc{I}(G)=f_*\mathcal{O}_Y(K_{\tilde{X}}-\floor{f^*(K_X+G)}),$$ where $f: \tilde{X}\to X$ is a log resolution of $G$. We denote by $Z(G)$ the scheme defined by $\mc{I}(G)$. If $(X,G)$ is log canonical but not klt at $x$, then we denote by $M(G)$ the minimal log canonical center of $(X,G)$ at $x$.

As explained in the introduction, we want the minimal log canonical center $M(G)$ to be a point. In that case,  the following lemma, which is a consequence of Kawamata-Viehweg vanishing theorem,  shows that the adjoint linear system $\lsyst{K_X+L}$ is free at $x$.

\begin{lemma}\label{cor:0-dimb-p-f} Let $G$ be an effective $\mb{Q}$--divisor on $X$ such that $Z(G)$ (or more generally, $M(G)$) is $0$-dimensional at $x$. Assume that $A$ is an integral divisor on $X$ such that $A-(K_X+G)$ is nef and big. Then the linear system $\lsyst{A}$ is free at $x$. 
\end{lemma}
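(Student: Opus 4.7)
The plan is to combine Nadel vanishing (i.e.\ Kawamata--Viehweg vanishing packaged through the multiplier ideal $\mathcal{I}(G)$) with a short exact sequence argument so that a local section of $\mathcal{O}_X(A)$ at $x$ can be extended to a global one. The whole proof fits into three short steps.

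First I would reduce to the case in which the cosupport $Z(G)$ of $\mathcal{I}(G)$ has $x$ as an \emph{isolated} point, so that near $x$ one has a decomposition $Z(G)=\{x\}\sqcup Z'$ with $x\notin Z'$. If the hypothesis is that $Z(G)$ itself is $0$-dimensional at $x$, there is nothing to do. If only $M(G)$ is assumed to be $0$-dimensional at $x$, the stronger statement about $Z(G)$ is arranged by the tie-breaking technique recalled in Remark \ref{rmk:critical}: replace $G$ by $G'=(1-\varepsilon_1)G+\varepsilon_2 D$ with $D$ an effective ample $\mathbb{Q}$-divisor and $\varepsilon_1,\varepsilon_2>0$ chosen small and rational. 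For generic such choices the only log canonical center of $(X,G')$ through $x$ is $\{x\}$, so the component of $Z(G')$ containing $x$ is just $x$, and for $\varepsilon_2$ small enough $A-(K_X+G')$ remains nef and big. After this reduction I rename $G'$ as $G$.

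Next, from the short exact sequence
\begin{equation*}
0\longrightarrow \mathcal{I}(G)\otimes\mathcal{O}_X(A)\longrightarrow \mathcal{O}_X(A)\longrightarrow \mathcal{O}_{Z(G)}\otimes\mathcal{O}_X(A)\longrightarrow 0,
\end{equation*}
Nadel vanishing applied with $A-(K_X+G)$ nef and big yields
\begin{equation*}
H^1\!\bigl(X,\mathcal{I}(G)\otimes\mathcal{O}_X(A)\bigr)=0,
\end{equation*}
so the restriction map $H^0(X,\mathcal{O}_X(A))\to H^0(Z(G),\mathcal{O}_{Z(G)}\otimes\mathcal{O}_X(A))$ is surjective. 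Using the local decomposition $Z(G)=\{x\}\sqcup Z'$, I choose the section of $\mathcal{O}_{Z(G)}\otimes\mathcal{O}_X(A)$ that equals $1$ at $x$ (via any local trivialization of $\mathcal{O}_X(A)$ at the point) and vanishes on $Z'$; any preimage in $H^0(X,\mathcal{O}_X(A))$ is a global section not vanishing at $x$, which is exactly the definition of $|A|$ being free at $x$.

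The main (and only) thing to be careful about is the tie-breaking reduction in the first step: one must verify simultaneously that tie-breaking preserves the $0$-dimensionality of the minimal log canonical center at $x$, that it upgrades $M(G)=\{x\}$ to $Z(G)$ being isolated at $x$, and that $A-(K_X+G')$ remains nef and big. All of this is standard once $\varepsilon_1,\varepsilon_2$ are chosen sufficiently small; after it is set up the Nadel vanishing + lifting step is automatic and requires no further computation.
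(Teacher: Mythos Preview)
Your argument is the standard one and is correct; the paper itself does not prove this lemma, simply recording it as ``a consequence of Kawamata--Viehweg vanishing theorem,'' so there is no alternative proof to compare against. One small point to tighten: in the tie-breaking step you assert that $A-(K_X+G')$ ``remains nef and big'' for $\varepsilon_1,\varepsilon_2$ small, but nefness is not an open condition, so this is not automatic from smallness alone. In the setting of the paper this is harmless, since (as Remark~\ref{rmk:critical} already notes) one always has $G\sim_{\mathbb{Q}}\lambda L$, $D\sim_{\mathbb{Q}} L$ and $A=K_X+L$ with $L$ ample, so $A-(K_X+G')\sim_{\mathbb{Q}}(1-(1-\varepsilon_1)\lambda-\varepsilon_2)L$ is genuinely ample; it would be cleaner to say this explicitly rather than appeal to a general perturbation principle.
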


However, in general, $\dim M(G)$ may be positive, we want to modify the divisor $G$ to get a new divisor $G'$ such that $M(G')$ is properly contained in $M(G)$. The difficulty to create such a new divisor $G'$ is measured by a function called the deficit $\df_x(X, G)$ of $G$ at $x$.

\begin{Def}\label{Def:deficit}
Let $(Y, B)$ be a log pair and $G$ be an  effective $\mathbb{Q}$--Cartier divisor on $Y$ such that $(Y, B+G)$ is log canonical at  $x\in Y$.  

We define the {\it deficit} of $G$ at $x$ as
  \[\begin{split}\df_x(Y, B, G)
  :=\sup\{&\ord_xD\mid (Y, B+G+D) \text{~is log canonical at~} x,\\ &D \text{~is~} \textup{effective~ and~} \mathbb{Q}\textup{--Cartier~on~}Y\}.\end{split}\] 
For an effective $\mathbb{Q}$--Cartier divisor $D$ on a normal projective variety $Y$, we define the  {\it order of vanishing} of $D$ at $x$ as 
\[\ord_xD:=\max\limits_m\{\frac{\max \{q~|~ \mc{O}_Y(-mD)\subset \mf{m}_x^q\}}{m}~|~mD \textit{~is ~Cartier~at~} x\}.\] If $Y$ is smooth at $x$,  and $D$ is a Cartier divisor, then our definition of order vanishing agrees with the usual one.  

For a subvariety $Z\subset Y$ containing $x$, we define the {\it relative deficit} of $G$ over $Z$ at $x$ as \[\begin{split}\df_x^Z(Y,B, G):=\sup\{&\df_x(Y,B, G+D) | ~(Y,B+G+D)\textup{ is log canonical at } x,\\ &{M(B+G+D)=Z}, D \text{~is~} \textup{effective~} \mathbb{Q}\textup{--Cartier}\}.\end{split}\] If there is no effective $\mb{Q}$-Cartier divisor $D$ such that $(Y,B+G+D)$ is log canonical at $x$ with $M(B+G+D)=Z$, we define $\df_x^Z(Y,B,G)=0$. 
\end{Def}

We would like to remark that this concept was introduced on smooth varieties by Ein \cite{Ein1997} and Helmke \cite{Helmke1997, Helmke1999} independently. In \cite{Ambro1999a}, Ambro also defined a similar concept called building of singularities towards Fujita's freeness conjecture.

For simplicity, without causing confusion, we will drop ``$B$" or ``$G$" from $\df_x(Y, B, G)$ and $\df_x^Z(Y, B, G)$ if 
$B=0$ or $G=0$. In particular, for a smooth variety $X$, we will simply write $\df_x(X, G)$ and $\df_x^Z(X, G)$ for $\df_x(X, 0, G)$ and $\df_x^Z(X, 0, G)$ respectively.

From the definition, we see that $\df_x^Z(Y, B, G)=\df_x(Y, B,G)$ if $Z=M(G)$.  Moreover, we have $\df_x(X, 0)\leq \dim X$.

In the following,  we will present some upper bounds for the deficit function and the relative deficit function.  We first recall the definition of minimal log discrepancy.

\begin{Def} The {\it minimal log discrepancy} of a log pair $(Y, B)$ at a proper Grothendieck point $\eta\in Y$ is defined as
\[\mld_\eta(Y, B)=\inf\limits_{C_Y(E)=\bar{\eta}}a(E; Y, B),\]
where the infimum is taken among all prime divisors over $Y$ having $\bar{\eta}$ as the center. 
\end{Def}

Let $(Y, B)$ be a pair which is log canonical at a point $x\in Y$, and $Z$ be the minimal log canonical center of $(Y, B)$ passing through $x$.  On $Z$, there is a divisor $\diff_Z(B)$ called the different of $(Y, B)$ at $Z$ as defined in \cite[Definition 4.2]{Ambro1999a}. Here, the different is the discriminant $\mb{Q}$--divisor which contains no moduli part. In other words, it is the divisorial part of the different defined in \cite[Section 4]{Kollar2013}.
 
We recall the following result on $\diff_Z(B)$.
\begin{lemma}[{\cite[Lemma 4.2]{Ambro1999a}}]\label{Ambro-diff}
Let $(Y, B)$ be a pair that is log canonical  at  $x\in Y$ with the minimal log center $Z$ and $D$ be $\mb{Q}$--Cartier divisor on $Y$ such that $Z$ is not contained in the support of $D$. Assume that $(Y, B+D)$ is still log canonical at $x$.  Then
\[\diff_Z(B+D)=\diff_Z(B)+D|_Z.\]
\end{lemma}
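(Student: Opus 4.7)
The plan is to compare both sides as $\mathbb{Q}$-Weil divisors on $Z$ via a common log resolution, the key input being adjunction along a fixed log canonical place lying over $Z$. First I would choose a log resolution $\mu : \tilde Y \to Y$ that simultaneously resolves $(Y, B)$, $(Y, B+D)$, and the ideal sheaf of $Z$. Using the tie-breaking perturbation of Remark~\ref{rmk:critical}, I may arrange that there is a unique prime divisor $E \subset \tilde Y$ with $a(E; Y, B) = 0$ and $C_Y(E) = Z$; because $Z$ is the minimal log canonical center, $E$ is an lc place over $Z$ and $\mu|_E : E \to Z$ is a birational morphism between normal varieties.

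Next I would track how $D$ interacts with this resolution. Since $Z \not\subset \supp D$ and $E$ lies over $Z$, no component of the strict transform of $D$ coincides with $E$, so $E$ still appears with coefficient exactly $1$ in $B^{\tilde Y} + \mu^* D$ and $a(E; Y, B+D) = 0$. In particular $(\mu^* D)|_E$ is a well-defined effective $\mathbb{Q}$-divisor on $E$. Writing $\mu^*(K_Y + B + D) = K_{\tilde Y} + B^{\tilde Y} + \mu^* D$ and applying adjunction along $E$ (subtracting the coefficient-$1$ component $E$ and restricting) yields
\[
\mu^*(K_Y + B + D)\big|_E = K_E + \mathrm{Diff}_E\bigl(B^{\tilde Y} - E\bigr) + (\mu^* D)\big|_E,
\]
while $\mu^*(K_Y + B)\big|_E = K_E + \mathrm{Diff}_E(B^{\tilde Y} - E)$.

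Finally I would push these identities down via $(\mu|_E)_*$. By Ambro's definition of the discriminant different (the divisorial part of the subadjunction formula), the two left-hand sides push down to $K_Z + \diff_Z(B+D)$ and $K_Z + \diff_Z(B)$ respectively. The only extra term on the right is $(\mu|_E)_*\bigl((\mu^* D)|_E\bigr)$, and since $\mu|_E$ is birational and $D|_Z$ is already a well-defined Cartier divisor (as $Z \not\subset \supp D$), the projection formula identifies this pushforward with $D|_Z$. Subtracting the two equalities then gives $\diff_Z(B+D) = \diff_Z(B) + D|_Z$.

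The main technical obstacle I expect is confirming that no exceptional divisor of $\mu|_E$ contributes to the pushforward at codimension-one points of $Z$, equivalently that the ``moduli part'' of subadjunction is invisible to the divisorial part $\diff_Z$. This is exactly what Ambro's discriminant formalism is designed to guarantee, so the argument comes down to verifying the correct functoriality of $\diff_Z$ under adding a $\mathbb{Q}$-Cartier divisor that does not contain $Z$ in its support; the uniqueness of $E$ coming from the tie-breaking step is what makes this verification clean at each prime divisor of $Z$.
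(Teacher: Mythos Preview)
The paper does not give its own proof of this lemma; it is simply quoted from \cite[Lemma~4.2]{Ambro1999a} as a known input, so there is no argument in the paper to compare against. Your task is therefore to reproduce (or replace) Ambro's proof, not the authors'.

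Your overall shape is reasonable, but there is one genuine gap: invoking the tie-breaking of Remark~\ref{rmk:critical} replaces $B$ by a perturbed boundary $B' = (1-\varepsilon_1)B + \varepsilon_2 D'$, and the different $\diff_Z$ depends on the boundary. So after tie-breaking you would be computing $\diff_Z(B')$ and $\diff_Z(B'+D)$, not $\diff_Z(B)$ and $\diff_Z(B+D)$; you never explain why the desired identity for $B$ follows from the one for $B'$, and in general it does not for free. The uniqueness of the lc place $E$ is a convenience, not something the definition of $\diff_Z$ requires, so you should drop this step rather than try to repair it.

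The cleaner route (and the one Ambro takes) is to work directly from the definition of the discriminant different: for each prime divisor $P \subset Z$ the coefficient of $P$ in $\diff_Z(B)$ is $1 - t_P$, where $t_P$ is the log canonical threshold over the generic point $\eta_P$ of the pullback of a local Cartier divisor cutting out $P$ on $Z$. Since $D$ is $\mb{Q}$--Cartier and $Z \not\subset \supp D$, adding $D$ shifts this threshold by exactly $\ord_P(D|_Z)$ at each $\eta_P$, giving $\diff_Z(B+D) = \diff_Z(B) + D|_Z$ componentwise. This avoids any choice of resolution or lc place and sidesteps the moduli-part issue you flagged in your last paragraph.
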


In the following proposition, we summarize some upper bounds for deficit functions. 

\begin{prop}\label{prop:Ein-Helmke}~ Let $G$ be an effective $\mb{Q}$--divisor on a smooth projective variety $X$ and  $(Y, B)$ be a log pair.   Assume that $(X, G)$ and $(Y, B)$ are log canonical at $x$, where $x$ represents a closed point on $Y$ as well as on $X$. Denote by $M(G)$ the minimal log canonical center of $(X, G)$ at $x$.
  \begin{enumerate}
    \item\label{Ein-Helmke-def-up-first} $\df_x(X, G)\leq \df_x(X, 0)-\ord_x G\leq \dim X-\ord_xG.$
\item \label{Helmke-singular} Let $Z$ be a subvariety of $X$ containing $x$ and let $D$ be an effective $\mb{Q}$--divisor on $X$ such that $Z$ is not contained in the support of $D$ and $\df_x^Z(X, G+D)>0$. Then $\df_x^Z(X, G+D)\leq \df_x^Z(X, G)-\ord_xD$.
\item\label{hyperplane} If $\df_x(X, G)\geq 1$, then for any general hyperplane section $H$, the pair $(H, G|_H)$ is log canonical at $x$ with minimal log canonical center $M(G)\cap H$ and $$\df_x(X, G)=\df_x(H, G|_H)+1.$$
\item\label{Ein-Helmke-def-up-last} $\df_x(X,G)\leq\dim M(G)$.

\item \label{def-up-new} 
$\df_x(Y, B)\leq \mld_x(Y,B)$.
\item \label{def-pairs} $\df_x(Y, B)\leq \df_x(Z, \diff_Z(B))$, where $Z$ is the minimal log canonical center of $(Y, B)$ at $x$. 
 \end{enumerate}
\end{prop}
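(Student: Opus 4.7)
The plan is to dispatch the six bounds by unwinding the suprema in Definition \ref{Def:deficit} and invoking, for each item, either a log-resolution estimate or an adjunction/restriction statement. Parts (a) and (b) are variations of the same unwinding: for (a), any $D$ admissible for $\df_x(X,G)$ makes $G+D$ admissible for $\df_x(X,0)$, so additivity of $\ord_x$ on effective divisors on smooth $X$ gives $\ord_x G+\ord_x D=\ord_x(G+D)\le\df_x(X,0)$, and supremizing yields the first inequality; the bound $\df_x(X,0)\le\dim X$ is the classical multiplicity estimate on the blow-up of $x$, where the exceptional divisor has log discrepancy $\dim X-\mult_x D$. For (b), after tie-breaking (Remark \ref{rmk:critical}) I may assume there is a unique critical divisor $E$ with $C_X(E)=Z$; the hypothesis $Z\not\subset\supp D$ then forces $\ord_E D=0$, so $E$ remains critical for $(X,G+D'')$ and $M(G+D'')=Z$, making $D''$ admissible for the outer supremum of $\df_x^Z(X,G)$. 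Applying (a) at the inner level gives $\df_x(X,G+D+D'')\le\df_x(X,G+D'')-\ord_x D\le\df_x^Z(X,G)-\ord_x D$, and supremizing over $D''$ closes (b).

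For (c), I would choose $H$ a general very ample section through $x$, so that $\ord_x H=1$ and $H$ meets $M(G)$ transversally at $x$. The hypothesis $\df_x(X,G)\ge 1$ furnishes, for every $\epsilon>0$, an effective $D_\epsilon$ with $\ord_x D_\epsilon\ge 1-\epsilon$ and $(X,G+D_\epsilon)$ lc at $x$; by genericity of $H$ and a perturbation argument comparing log discrepancies, $(X,G+(1-\epsilon)H)$ is also lc at $x$, and letting $\epsilon\to 0$ yields $(X,G+H)$ lc at $x$. Inversion of adjunction then passes log canonicity to $(H,G|_H)$, and transversality identifies the minimal log canonical center as $M(G)\cap H$. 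The numerical identity $\df_x(X,G)=\df_x(H,G|_H)+1$ is then established by two extensions: restricting a witness on $X$ to $H$ and subtracting the $H$ contribution gives one direction, while pulling a witness on $H$ back to a tubular neighborhood of $H$ in $X$ and adding $H$ gives the other. Part (d) follows from (c) by descending induction on $\dim M(G)$: each general hyperplane cut drops both $\dim M(G)$ and $\df_x(X,G)$ by exactly one, and in the base case $M(G)=\{x\}$ the critical divisor $E$ of Remark \ref{rmk:critical} has $a(E;X,G)=0$, so any positive-order effective $D$ would make $a(E;X,G+D)<0$, forcing $\df_x(X,G)=0$.

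For (e), let $D$ be any admissible witness for $\df_x(Y,B)$ and $E$ any prime divisor over $Y$ with center exactly $\{x\}$. Log canonicity of $(Y,B+D)$ at $x$ gives $a(E;Y,B)-\ord_E D\ge 0$, hence $\ord_E D\le a(E;Y,B)$; since $\ord_E(\m_{Y,x})\ge 1$ for such an $E$, we have $\ord_x D\le\ord_E D\le a(E;Y,B)$, and the infimum over $E$ and supremum over $D$ deliver $\df_x(Y,B)\le\mld_x(Y,B)$. For (f), I first use the tie-breaking perturbation of Remark \ref{rmk:critical} to ensure the witness $D$ does not contain $Z=M(B)$ in its support. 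Lemma \ref{Ambro-diff} then gives $\diff_Z(B+D)=\diff_Z(B)+D|_Z$, and subadjunction ensures $(Z,\diff_Z(B+D))$ is lc at $x$, so $D|_Z$ is admissible for $\df_x(Z,\diff_Z(B))$. Since the quotient $\mc{O}_{Y,x}\twoheadrightarrow\mc{O}_{Z,x}$ sends $\m_{Y,x}^q$ onto $\m_{Z,x}^q$, we get $\ord_x D\le\ord_x(D|_Z)\le\df_x(Z,\diff_Z(B))$, concluding (f).

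The hard part will be the \emph{equality} (rather than mere $\ge$) in (c): Bertini and inversion of adjunction yield log canonicity of the restriction almost for free, but matching orders of vanishing in both directions demands a careful lift of a relative-deficit witness from $H$ back to $X$, together with precise bookkeeping that a general $H$ contributes exactly $+1$ to $\ord_x$. Secondary technical points arise in (b) and (f), where one must verify that the tie-breaking perturbations are compatible with the ambient log canonicity hypothesis; these are standard small-$\varepsilon$ arguments but still deserve care.
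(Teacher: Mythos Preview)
Your argument for (e) and (f)—the only parts the paper actually proves, deferring (a)–(d) to \cite{Ein1997} and \cite{Helmke1999}—is essentially the paper's. One correction in (f): tie-breaking (Remark~\ref{rmk:critical}) perturbs the boundary $B$, not the witness $D$, so it cannot be used to arrange $Z\not\subset\supp D$. The paper instead observes that this is automatic: since $Z=M(B)$ is an lc center of $(Y,B)$, there is a prime divisor $E$ over $Y$ with $C_Y(E)=Z$ and $a(E;Y,B)=0$; if $Z\subset\supp D$ then $\ord_E D>0$ and $a(E;Y,B+D)<0$, contradicting the assumed log canonicity of $(Y,B+D)$. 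With that one-line substitution your proof of (f) coincides with the paper's. The tie-breaking in your sketch of (b) is likewise unnecessary: that $M(G+D'')=Z$ follows because every lc center of $(X,G+D'')$ at $x$ is also an lc center of $(X,G+D+D'')$ (hence contains $Z$), while $Z$ itself remains an lc center of $(X,G+D'')$ via the same place $E$.
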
 
\begin{proof}
The statements \ref{Ein-Helmke-def-up-first}-\ref{Ein-Helmke-def-up-last} can be found in \cite{Ein1997} and \cite{Helmke1999}. In fact,  \ref{Helmke-singular} is stated in the proof of Theorem 2.2 in Helmke. We now show that the statement \ref{def-up-new} holds.
For any effective $\mb{Q}$--Cartier divisor $D$ on $Y$ such that $\ord_xD>\mld_x(Y, B)$, we will show that $(Y, B+D)$ is not log canonical at $x$. Let $\varepsilon$ be a positive number such that $\varepsilon<\ord_xD-\mld_x(Y, B)$.
By the definition of minimal log discrepancy, we can choose a proper birational morphism
$\mu: \tilde{Y}\to Y$ such that $a(E; Y, B)\leq \mld_x(Y, B)+\varepsilon$ for a prime divisor $E$ over $x$.
Note that  $a(E; Y, B+D)=a(E; Y, B)-\ord_E\mu^*(D)$ and $\ord_E\mu^*(D)\geq \ord_xD$.
Then
\[a(E; Y, B+D)\leq a(E; Y, B)-\ord_xD< \mld_x( Y, B)+\varepsilon-(\mld_x(Y, B)+\varepsilon)=0.\] Therefore, $(Y, B+D)$ is not log canonical at $x$. Hence, by the definition of deficit, we see that $\df_x(Y, B)\leq \mld_x(Y, B)$.

To prove \ref{def-pairs}, we will show that $\ord_xD\leq \df_x(Z, \diff_Z(G))$ for any effective $\mb{Q}$--Cartier divisor $D$ such that $(Y, B+D)$ is log canonical at $x$. Assume that $(Y, B+D)$ is log canonical at $x$. Since $(Y, B)$ is log canonical at $x$ with the minimal log canonical center $Z=M(B)$, then $Z$ is not in the support of $D$ and $(Z, \diff_Z(B+D))$ is also log canonical by adjunction theorem (see \cite[Theorem 2.5]{Ambro1999a}). By Lemma \ref{Ambro-diff}, we know that $\diff_Z(B+D)=\diff_Z(B)+D|_Z$. Therefore, by the definition of deficit, we conclude that
\[\ord_xD\leq \ord_xD|_Z\leq \df_x(Z, \diff_Z(B))\]
which implies that $\df_x(Y, B)\leq \df_x(Z, \diff_Z(B))$.
\end{proof}

By Proposition \ref{Ein-Helmke-def-up-first} and the definition of relative deficit, we know that if the  minimal log canonical center $Z=M(G)$ of $(X, G)$ at $x$ is a hypersurface, then \[\df_x^Z(X, 0)=\df^Z_x(X, Z)=\df_x(X, Z)\leq \df_x(X, 0)-\ord_xZ.\]

\begin{lemma}[\cite{Ein1997}, \cite{Helmke1999}]\label{lemma:Ein-Siu}Let $G$ be an effective $\mathbb{Q}$--Cartier divisor on a klt log pair $(Y, B)$ such that $(Y, B+G)$ is log canonical at  $x\in Y$. Denote by $Z$ the minimal log canonical center of $(Y, B+G)$ at $x$. Let $D$ be an effective $\mathbb{Q}$--Cartier divisor on $Y$ such that $(Y, B+G+D)$ is still log canonical at $x$, $Z$ is not contained in $D$ and the minimal log canonical center $Z'$ of $(Y, B+G+D)$ is properly contained in $Z$. Then
\[\df_x(Y,B, G+D)\leq \df_x(Y,B, G)-\ord_xD|_Z.\]
\end{lemma}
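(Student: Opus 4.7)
The plan is to apply subadjunction to the minimal log canonical center $Z$, transferring the deficit computation on $Y$ to the induced pair $(Z, \diff_Z(B+G))$. The quantity $\ord_x D|_Z$ then appears naturally through the additivity of the different recorded in Lemma~\ref{Ambro-diff}.

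Concretely, I would take an arbitrary effective $\mb{Q}$--Cartier divisor $D'$ on $Y$ with $(Y, B+G+D+D')$ log canonical at $x$, and try to show that $\ord_x D' + \ord_x D|_Z \leq \df_x(Y, B, G)$. After the tie-breaking reduction of Remark~\ref{rmk:critical}, I may assume $G$ is critical at $x$, so that on a log resolution $\mu: \tilde{Y} \to Y$ there is a unique prime divisor $E$ with $C_Y(E) = Z$ and $a(E; Y, B+G) = 0$. The log canonicity of $(Y, B+G+D+D')$ at $x$ forces $a(E; Y, B+G+D+D') \geq 0$, hence $\ord_E \mu^*(D+D') = 0$, which shows $Z$ is contained in neither $\supp(D)$ nor $\supp(D')$. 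By subadjunction (\cite[Theorem~2.5]{Ambro1999a}) the pair $(Z, \diff_Z(B+G+D+D'))$ is log canonical at $x$, and Lemma~\ref{Ambro-diff} (applied twice) gives
\[
\diff_Z(B+G+D+D') = \diff_Z(B+G) + D|_Z + D'|_Z.
\]
The definition of deficit applied on $Z$ yields $\ord_x D|_Z + \ord_x D'|_Z \leq \df_x(Z, \diff_Z(B+G))$, which together with $\ord_x D'|_Z \geq \ord_x D'$ gives $\ord_x D' + \ord_x D|_Z \leq \df_x(Z, \diff_Z(B+G))$. Using Proposition~\ref{prop:Ein-Helmke}\ref{def-pairs} to relate $\df_x(Z, \diff_Z(B+G))$ back to $\df_x(Y, B, G)$, and taking supremum over $D'$, then delivers the claim.

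The most delicate step is the final identification of $\df_x(Z, \diff_Z(B+G))$ with $\df_x(Y, B, G)$: Proposition~\ref{prop:Ein-Helmke}\ref{def-pairs} only provides $\df_x(Y, B+G) \leq \df_x(Z, \diff_Z(B+G))$ in general, so one needs the reverse inequality in the critical setting. Intuitively, after tie-breaking, every effective divisor on $Z$ witnessing the deficit should lift to a divisor on $Y$ with matching order at $x$, using the unique critical divisor $E$; however, one must verify that such a lift preserves both log canonicity of $(Y, B+G+\,\cdot\,)$ and respects the order-of-vanishing convention on the possibly singular center $Z$, which is where most of the technical work sits.
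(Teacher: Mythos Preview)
The paper does not give its own proof of this lemma; it merely cites Ein and Helmke, so there is no paper argument to compare against line by line. Your subadjunction route is conceptually clean, and steps (1)--(6) are fine (note that in step (5) the needed inequality $\ord_x D|_Z + \ord_x D'|_Z \le \ord_x(D|_Z + D'|_Z)$ follows from super-additivity of the order function on a normal variety, so that step is justified). The real issue is exactly the one you flag in step (7): what you have proved is
\[
\df_x(Y,B,G+D)\;\le\;\df_x\bigl(Z,\diff_Z(B+G)\bigr)-\ord_x D|_Z,
\]
and Proposition~\ref{prop:Ein-Helmke}\ref{def-pairs} only gives $\df_x(Y,B,G)\le \df_x(Z,\diff_Z(B+G))$, which points the wrong way. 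Your proposed remedy, the reverse inequality in the critical setting, is \emph{not} available in the paper and is not a formality: lifting an arbitrary effective $\mb{Q}$--Cartier divisor on $Z$ to one on $Y$ that both preserves $\ord_x$ and keeps $(Y,B+G+\cdot)$ log canonical at $x$ requires a genuine extension argument together with inversion of adjunction near $x$ (not just near the generic point of $Z$), and there is no reason such a lift should exist with the correct order of vanishing on the possibly singular $Y$.

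The arguments in the references you cite avoid this detour entirely. After tie-breaking, one fixes the unique prime divisor $E$ over $Y$ with center $Z$ and $a(E;Y,B+G)=0$, and tracks log discrepancies on a fixed log resolution directly. The key numerical input is that for an effective Cartier divisor $D$ not containing $Z$, the quantity $\ord_E(\mu^*D)$ is bounded below in terms of $\ord_x(D|_Z)$ (via the induced valuation on $Z$), so that adding $D$ consumes at least $\ord_x D|_Z$ worth of the ``budget'' that the remaining divisors in the resolution allow before log canonicity fails. This comparison stays on $Y$ throughout and never requires identifying $\df_x(Y,B,G)$ with $\df_x(Z,\diff_Z(B+G))$. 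If you want to salvage the subadjunction approach, you would need to prove that reverse inequality as a separate lemma; otherwise, the resolution-based argument of Ein/Helmke is the path of least resistance.
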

For a smooth projective variety $X$, the result is stated in \cite[Lemma 4.6]{Ein1997}. In \cite{Helmke1999}, he stated this general lemma in the proof of Theorem 2.2.

\section{Singularities of the minimal log canonical center\label{sec:singularity-of-center}}
It is important to study singularities of $M(G)$, especially the multiplicity $\mult_xM(G)$. In \cite{Kawamata1997}, Kawamata initiated the study of subadjucntion formulae. Now we have the following characterization of singularities of minimal log canonical centers.

\begin{thm}[\cite{Fujino2012}]\label{thm:Kawamata1998}
 Assume that $(X,G)$ is log canonical at $x$ with the minimal log canonical center $Z$.  Then there exist an effective $\mb{Q}$--divisor $G_Z$ on $Z$ such that $$(K_X+G)|_Z\sim K_Z+G_Z$$ and the pair $(Z, G_Z)$ is klt at $x$. In particular, $Z$ has at most a rational singularity at $x$.
\end{thm}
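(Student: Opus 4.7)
The plan is to derive the statement from Kawamata's canonical bundle formula applied to the divisor carving out $Z$. After the tie-breaking procedure recalled in Remark \ref{rmk:critical}, I may assume there is a unique prime divisor $E$ on a log resolution $f:\tilde X\to X$ of $(X,G)$ with $C_X(E)=Z$ and $a(E;X,G)=0$, while every other exceptional prime over $x$ has strictly positive log discrepancy. Writing $K_{\tilde X}+E+\Delta=f^*(K_X+G)$, adjunction along the smooth divisor $E$ produces a $\mb{Q}$-divisor $B_E=\Delta|_E$ satisfying $K_E+B_E\sim_{\mb Q}(f|_E)^*\bigl((K_X+G)|_Z\bigr)$. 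Uniqueness of the critical divisor $E$ forces the coefficients of $B_E$ to be strictly less than $1$ over the generic point of $Z$, so $f|_E:E\to Z$ is a generically klt contraction between normal varieties.

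I would then invoke the canonical bundle formula of Kawamata, extended to this setting by Ambro and by the work cited as \cite{Fujino2012}. Applied to $f|_E$, it yields a splitting
\begin{equation*}
(K_X+G)|_Z \;\sim_{\mb Q}\; K_Z + G_Z^{\mathrm{disc}} + M_Z,
\end{equation*}
where the \emph{discriminant} $G_Z^{\mathrm{disc}}\geq 0$ records the vertical non-klt locus of $(E,B_E)$ on $Z$ and the \emph{moduli part} $M_Z$ captures the variation of the fibers. The key analytic input, and the actual content of \cite{Fujino2012}, is that on a suitable birational model of $Z$ the moduli part $M_Z$ is nef. Combined with the small ample perturbation $\varepsilon_2 D$ from Remark \ref{rmk:critical}, $M_Z$ acquires an ample summand and is therefore $\mb{Q}$-linearly equivalent to an effective $\mb{Q}$-divisor which, by Bertini, can be chosen so as not to create any new log canonical locus at $x$. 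Absorbing this representative into the discriminant yields an effective $G_Z$ on $Z$ with $(K_X+G)|_Z\sim_{\mb Q} K_Z+G_Z$.

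It remains to check that $(Z,G_Z)$ is klt at $x$, since then $Z$ itself is klt at $x$ and hence has rational singularities by the Elkik--Kollár--Mori theorem. Suppose instead that some prime divisor $F$ over $Z$ with $x\in C_Z(F)$ had $a(F;Z,G_Z)\leq 0$. Unwinding the canonical bundle formula and pulling $F$ back through $f|_E$ and then $f$, one obtains a prime divisor $\tilde F$ on a further modification of $\tilde X$ whose center on $X$ is a proper subvariety of $Z$ passing through $x$ and with $a(\tilde F;X,G)\leq 0$; this contradicts the minimality of $Z$ as the lc center of $(X,G)$ at $x$.

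The principal obstacle is the positivity of the moduli part $M_Z$: establishing that $M_Z$ is nef on a birational model of $Z$ is the nontrivial Hodge-theoretic content of Kawamata's subadjunction and its refinement by Ambro and Fujino. Once that is in hand, the effective choice of $G_Z$, the verification of the klt property at $x$, and the rational singularities assertion all follow formally from the minimality of $Z$ and standard comparisons of log discrepancies under the canonical bundle formula.
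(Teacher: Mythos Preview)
The paper does not supply its own proof of this theorem: it is quoted as a result from \cite{Fujino2012} (building on Kawamata's subadjunction and Ambro's work) and used as a black box. Your sketch correctly reconstructs the architecture of that argument---tie-breaking to a unique critical divisor $E$, adjunction to obtain an lc-trivial fibration $(E,B_E)\to Z$, the canonical bundle formula splitting into discriminant and moduli parts, b-nefness of the moduli part as the Hodge-theoretic input, and an ample perturbation to realize $M_Z$ as an effective divisor general enough to preserve klt-ness.

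One point where your outline is slightly imprecise is the verification that $(Z,G_Z)$ is klt at $x$. The lifting-of-$F$ argument you describe is not the mechanism used in \cite{Fujino2012}; rather, the discriminant $G_Z^{\mathrm{disc}}$ is \emph{defined} so that its coefficient along any prime $P\subset Z$ is $1-\mathrm{lct}$ of the fiber over the generic point of $P$, and the minimality of $Z$ as an lc center of $(X,G)$ forces each such lct to be positive, hence each coefficient to be strictly less than $1$. This gives $(Z,G_Z^{\mathrm{disc}})$ klt directly, and then a Bertini-general representative of the ample $M_Z$ keeps the pair klt at $x$. Your contradiction argument can be made to work, but it hides exactly this definition of the discriminant.
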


\begin{rmk}\label{rmk:embedding-dim-surface-sing}
Assume that the minimal log canonical center $Z$ of $(X, G)$ is a surface. Since $Z$ has a rational singularity at $x$, then $\mult_xZ=e-1$, where $e$ is the embedding dimension of $Z$ at $x$.
\end{rmk}

We also note that an implicit upper bound for deficit is given by the inequality in the following theorem.
\begin{thm}[\cite{Helmke1997}]\label{Thm:boundofmult}
Assume that $(X,G)$ is log canonical at $x$ with the minimal log canonical center $Z$.  Let $e$ be the embedding dimension of $Z$ at $x$, $d=\dim Z$ and $m=\mult_xZ$. Then
  \[m\leq \begin{pmatrix} e -\ceil{\df_x(X, G)}\\ e-d\end{pmatrix}.\]
  In particular, \[m\leq \begin{pmatrix} e -1\\ d-1\end{pmatrix}.\] 
\end{thm}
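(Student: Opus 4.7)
The plan is induction on $d = \dim Z$, reducing by general hyperplane sections through $x$: each cut is designed to decrease $d$, the embedding dimension at $x$, and $\df_x(X,G)$ each by one, while preserving the multiplicity $m$.

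The base case $d = 0$ is trivial: $Z = \{x\}$ gives $e = 0$ and $m = 1$, and $\binom{-\ceil{\df_x(X,G)}}{0} = 1$, so the claimed inequality holds automatically.

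For the inductive step with $d \geq 1$ and $\df_x(X,G) \geq 1$, pick a general hyperplane section $H$ through $x$. By \ref{hyperplane} of Proposition \ref{prop:Ein-Helmke}, $(H, G|_H)$ is log canonical at $x$ with minimal log canonical center $Z \cap H$ and deficit $\df_x(H, G|_H) = \df_x(X,G) - 1$. A sufficiently general $H$ is transverse to the Zariski tangent space of $Z$ at $x$, which forces $\dim(Z \cap H) = d - 1$ and embedding dimension $e - 1$ at $x$; meanwhile the Samuel multiplicity at $x$ is preserved, $\mult_x(Z \cap H) = m$. Applying the inductive hypothesis to the pair $(H, G|_H)$ then yields
\[
m \;\leq\; \binom{(e-1) - \ceil{\df_x(X,G) - 1}}{(e-1) - (d-1)} \;=\; \binom{e - \ceil{\df_x(X,G)}}{e-d}.
\]

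After at most $\lfloor \df_x(X,G) \rfloor$ iterations this reduction terminates at the remaining case $0 \leq \df_x(X,G) < 1$, where Proposition \ref{hyperplane} no longer applies. This is the main obstacle: the target becomes $m \leq \binom{e}{e-d}$ (when $\ceil{\df_x(X,G)} = 0$) or $m \leq \binom{e - 1}{e-d}$ (when $\ceil{\df_x(X,G)} = 1$), and I would argue directly by passing to a smooth formal subvariety $V \subset X$ of dimension $e$ that contains $Z$ locally at $x$, then examining the projectivized tangent cone of $Z$ at $x$ in $\mathbb{P}^{e-1}$, a subvariety of dimension $d - 1$ and degree $m$. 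Combining the log canonicity of $(X, G)$ with the rational singularity structure of $Z$ at $x$ given by Theorem \ref{thm:Kawamata1998} should provide the bound on the degree of this tangent cone needed to conclude.

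The ``in particular'' bound $m \leq \binom{e-1}{d-1}$ follows from the main bound via the identity $\binom{e-1}{d-1} = \binom{e-1}{e-d}$, once one observes that $\ceil{\df_x(X,G)} \geq 1$ for every proper subvariety $Z \ni x$: one can always add a small $\mathbb{Q}$-divisor through $x$ that is not contained in $Z$ and keep the pair log canonical, which forces $\df_x(X,G) > 0$.
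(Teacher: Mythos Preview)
The paper does not supply its own proof of this statement: Theorem \ref{Thm:boundofmult} is quoted from \cite{Helmke1997} and used as a black box. So there is nothing in the present paper to compare your argument against, and your task was effectively to reconstruct Helmke's proof.

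Your reduction by general hyperplanes through $x$ is sound as far as it goes: each cut drops $d$, $e$, and $\df_x$ by one while preserving $m$, so the binomial bound is invariant under the step. The genuine gap is exactly the one you flag yourself. The hyperplane reduction of Proposition~\ref{prop:Ein-Helmke}\ref{hyperplane} requires $\df_x(X,G)\ge 1$, and since in general $\df_x(X,G)$ can be strictly smaller than $d=\dim Z$ (only the inequality $\df_x\le d$ is available), the process halts at a pair with $0<\df_x<1$ but $d>0$. At that point the target bound is $m\le \binom{e-1}{e-d}$, and your proposed endgame --- passing to a smooth formal $e$-dimensional germ and looking at the projectivized tangent cone of $Z$ in $\mathbb{P}^{e-1}$ --- does not by itself yield any degree bound: a $(d-1)$-dimensional subvariety of $\mathbb{P}^{e-1}$ can have arbitrarily large degree, and neither the klt/rational-singularity property of $Z$ from Theorem~\ref{thm:Kawamata1998} nor log canonicity of $(X,G)$ obviously constrains that degree without further work. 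The phrase ``should provide the bound'' is precisely where a real argument is needed and none is given.

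In Helmke's original proof the mechanism is different: one works on a log resolution, identifies the unique divisor $E$ over $x$ with center $Z$ and $a(E;X,G)=0$, and extracts the multiplicity bound from the combinatorics of how $E$ meets the other exceptional divisors together with the constraint $\ord_E G \ge$ (codimension of $Z$) coming from log canonicity. That analysis does not reduce to a hyperplane induction and handles the small-deficit case directly. If you want to complete your approach, you would need to supply an independent argument for the terminal case $0<\df_x<1$ that is essentially as strong as Helmke's full proof; the hyperplane reduction only buys you the passage from $\df_x$ to its fractional part.
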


\begin{rmk}If $H$ is a prime divisor and $(X, H)$ is log canonical at $x$ whose minimal log canonical center at $x$ is $H$ itself,  then $\df_x(X, H)=\dim X-\mult_xH$ which shows that the inequality in the above theorem is optimal (see Example 3.5 \cite{Helmke1999}).
\end{rmk}

\begin{Def}\label{Def:alpha}
 Keep the same assumption and notations as in Theorem \ref{Thm:boundofmult}, we define $\beta_{d,e}(m)\leq d$ to be largest solution of $y$ in the equation
  \[\begin{pmatrix} e -y\\ e-d\end{pmatrix}=m.\]  Denote by $\alpha_{d,e}(m)$ the largest integer less than or equal to $\beta_{d,e}(m)$.
\end{Def}

By the definition of $\alpha_{d, e}(m)$, we clearly have the following corollary of Theorem \ref{Thm:boundofmult}. 
\begin{cor}\label{deflesalpha}
Keep the same assumption and notations as in Theorem \ref{Thm:boundofmult}. Then $$\df_x(X, G)\leq \alpha_{d, e}(m).$$
\end{cor}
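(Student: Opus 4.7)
The plan is to obtain Corollary \ref{deflesalpha} as an immediate numerical inversion of the inequality in Theorem \ref{Thm:boundofmult}, using nothing more than the definitions of $\beta_{d,e}(m)$ and $\alpha_{d,e}(m)$ and the monotonicity of a single-variable polynomial. No further geometric input is required.

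The key step will be to set $y_{0} := \lceil \df_{x}(X, G) \rceil$, so that Theorem \ref{Thm:boundofmult} reads $\binom{e-y_{0}}{e-d} \geq m$. I would then view $\binom{e-y}{e-d}$ as the polynomial
\[
  \frac{(e-y)(e-y-1)\cdots(d-y+1)}{(e-d)!}
\]
in a real variable $y$, and observe that this expression is strictly decreasing in $y$ on the region where its value is at least $1$. Since $m \geq 1$ and $\binom{e-y_{0}}{e-d} \geq m$, the value $y_{0}$ lies in that region. Because $\beta_{d,e}(m)$ is by definition the \emph{largest} real solution of $\binom{e-y}{e-d} = m$, the monotonicity forces $y_{0} \leq \beta_{d,e}(m)$.

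To finish, I would use integrality: $y_{0}$ is an integer, and $\alpha_{d,e}(m)$ is, by definition, the largest integer bounded above by $\beta_{d,e}(m)$. Therefore $y_{0} \leq \alpha_{d,e}(m)$, and combining with $\df_{x}(X, G) \leq \lceil \df_{x}(X, G) \rceil = y_{0}$ yields the desired bound $\df_{x}(X, G) \leq \alpha_{d,e}(m)$. The only point that might require any care is the monotonicity of $\binom{e-y}{e-d}$ on the relevant interval, but since $e - d \geq 0$ is a fixed non-negative integer and one only needs the decreasing behavior where the value is $\geq 1$, this is an elementary one-variable check and I do not expect it to present any real obstacle.
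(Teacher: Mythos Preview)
Your proposal is correct and follows exactly the route the paper intends: the paper offers no argument beyond ``By the definition of $\alpha_{d,e}(m)$, we clearly have the following corollary,'' and your plan simply spells out that numerical inversion. The one caveat you flagged is real but harmless: the polynomial $\binom{e-y}{e-d}$ is not globally decreasing on the set where it is $\geq 1$ (there is a second component with $e-y<0$), so to place $y_0$ on the correct branch you should invoke $\df_x(X,G)\le d$ (Proposition~\ref{Ein-Helmke-def-up-last}), giving $e-y_0\ge e-d$ and hence the desired monotonicity.
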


It is clear that $\alpha_{d,e}(m)\leq d-1$ if $m=\mult_xZ\geq 2$. Moreover,  $\alpha_{d,e}(m)$ is a decreasing function of $m$.

\begin{ex}\label{lemma:dim=2}
Assume that $Z$ is the minimal log canonical center of an effective divisor $G$ at $x\in X$ and  $\dim Z=2$. By Remark \ref{rmk:embedding-dim-surface-sing}, we know that $m=\mult_xZ=e-1$, where $e$ is the embedding dimension. Then  $\alpha_{2, e}(m)=1$ whenever $m\geq 2$.
\end{ex}

\begin{ex}\label{lemma:dim=3} Assume that $\dim X=5$ and $(X, G)$ is log canonical at $x\in X$ with the minimal log canonical center $Z$ at $x$. If $\dim Z=3$ and $Z$ is singular at $x$, then
\[\df_x(X, G)\leq\alpha_{3, 5}(m)=\begin{cases}2, ~~\mult_xZ=2, 3;\\ 1, ~~\mult_xZ=4, 5, 6.\end{cases}\]
\end{ex}

We remark that $\alpha_{d, e}(m)$ is sufficient to be used to prove Fujita's freeness conjecture on $4$-folds (see \cite{Ye2014}). For 5-folds, we need better bounds for deficits.  Suggested by Proposition \ref{def-up-new}, indeed we can obtain almost optimal upper bounds when the minimal log canonical center is a surface. 

\begin{lemma} \label{mld-rational-surface}
Let $(S, x)$ be a germ of rational surface singularity of multiplicity $m\leq 4$ and $\pi: \tilde{S}\to (S, x)$ be the minimal resolution of $x$. Then $\df_x(S,0)\leq\mld_x(S)\leq \frac{2}{m}$. Moreover, if $m=3$ and the minimal resolution of $(S, x)$ consists of at least one $(-2)$-curve, then $\df_x(S, 0)\leq \frac{3}{5}$. 
\end{lemma}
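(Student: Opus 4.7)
The first inequality $\df_x(S,0)\leq\mld_x(S)$ is Proposition~\ref{def-up-new} applied to the log pair $(Y,B)=(S,0)$; if $(S,0)$ fails to be log canonical then $\df_x(S,0)$ is undefined while $\mld_x(S)\leq 0$, so the bound is vacuous in that case. The real work is therefore the inequality $\mld_x(S)\leq 2/m$, which I would extract from adjunction on the minimal resolution $\pi\colon\tilde S\to S$. By rationality, the exceptional components $E_1,\dots,E_r$ are smooth rational curves, and minimality of $\pi$ forces $E_i^2\leq -2$. Let $Z=\sum n_i E_i$ be the fundamental cycle, so $n_i\geq 1$, $p_a(Z)=0$ by Artin's criterion and $-Z^2=m$; write $K_{\tilde S}=\pi^*K_S+A$ with $A=\sum a_i E_i$.

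Two adjunctions set everything up: on $Z$, $K_{\tilde S}\cdot Z=2p_a(Z)-2-Z^2=m-2$; on each $E_i$, $K_{\tilde S}\cdot E_i=-E_i^2-2\geq 0$. The second relation reads $A\cdot E_i\geq 0$ for all $i$, and negative definiteness of the exceptional intersection form then forces $a_i\leq 0$. Setting $b_i:=-a_i\geq 0$ and $c_i:=-E_i\cdot Z\geq 0$ and expanding $-Z^2=m$ and $A\cdot Z=m-2$ in the basis $\{E_i\}$ yields
\[
\sum_i n_i c_i \;=\; m, \qquad \sum_i b_i c_i \;=\; m-2.
\]
Since each $n_i\geq 1$ one has $\sum_i c_i\leq m$, hence
\[
m-2 \;=\; \sum_i b_i c_i \;\leq\; \bigl(\max_i b_i\bigr)\sum_i c_i \;\leq\; m\cdot\max_i b_i,
\]
so $\max_i b_i\geq (m-2)/m$. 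The minimal log discrepancy of an lc surface singularity is realised on the minimal resolution (any divisor from a further blow-up has log discrepancy bounded below by a sum of non-negative $a(E_j;S,0)$), so $\mld_x(S)=1-\max_i b_i\leq 2/m$.

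The refined bound $\df_x(S,0)\leq 3/5$ when $m=3$ and some $E_0$ is a $(-2)$-curve is where I expect the main difficulty. The new ingredient is $A\cdot E_0=0$, i.e.\ $2b_0=\sum_{j\sim 0}b_j$, which couples neighbouring $b_j$ and is what should strengthen the generic inequality $\max_i b_i\geq 1/3$ to the target $\max_i b_i\geq 2/5$. My plan is to exploit integrality of the $c_i$ and the relation $\sum n_i c_i=3$ (a very short list of admissible support patterns) together with the classification of dual graphs of rational triple points containing a $(-2)$-curve; for each such graph the linear system $e_i b_i-\sum_{j\sim i}b_j=e_i-2$ has a unique rational solution, and a direct check produces $\max_i b_i\geq 2/5$, with equality realised on the $(-2)(-3)$ chain (where $b_1=1/5$, $b_2=2/5$, giving $\mld_x(S)=3/5$). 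The obstacle is precisely this finite case analysis; the cleaner route would be to derive $\max_i b_i\geq 2/5$ directly from the $(-2)$-relation together with $\sum b_i c_i=1$ and $\sum n_i c_i=3$, bypassing the classification entirely, and I would attempt that one-line argument first before resorting to the case-by-case treatment.
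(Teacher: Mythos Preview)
Your argument for the main bound $\mld_x(S)\leq 2/m$ is correct and in fact cleaner than what appears in the paper. The paper itself omits the proof of this lemma, referring only to ``combinatorics of the fundamental cycle''; the general inequality $\mld_x(S)\leq 2/m$ is proved in Appendix~\ref{sec:mld-def-surfaces} (by Jun Lu) via contradiction: assuming $b_i<1-2/m$ for all $i$, one shows the auxiliary divisor $H=K_{\tilde S/S}+(1-2/m)E$ is effective, then forces $Z=E$ via intersection with $Z$, and finally derives that $H$ is both effective and anti-effective. Your route is a direct two-line estimate: from $\sum n_ic_i=m$ and $\sum b_ic_i=m-2$ with $n_i\geq 1$ and $c_i\geq 0$ you read off $\max_i b_i\geq (m-2)/m$ immediately. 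This buys you a shorter, non-contradictory proof that makes transparent exactly which exceptional component realises the bound (one with $c_i>0$, i.e.\ one meeting $Z$ negatively), at the cost of the mild extra observation that $\mld_x$ is computed on the minimal resolution for a klt surface --- which you justify correctly.

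For the refined bound $\df_x(S,0)\leq 3/5$ when $m=3$ and a $(-2)$-curve is present, your plan (solve the linear system $e_ib_i-\sum_{j\sim i}b_j=e_i-2$ over the finite list of rational triple-point graphs containing a $(-2)$-curve, and verify $\max_ib_i\geq 2/5$) is exactly the kind of case check the paper has in mind when it calls the proof ``elementary''; the paper gives no further detail here either. Your identification of the $(-2)(-3)$ chain as the extremal case with $\mld_x(S)=3/5$ is correct. The ``one-line'' shortcut you hope for --- extracting $\max_ib_i\geq 2/5$ directly from $2b_0=\sum_{j\sim 0}b_j$, $\sum b_ic_i=1$ and $\sum n_ic_i=3$ --- does not obviously go through without some graph input (the relation at $E_0$ alone only says $b_0$ is an average of its neighbours, which need not be the ones carrying $c_i>0$), so the finite case analysis is likely unavoidable.
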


The proof of this lemma is elementary and only involves combinatorics of the fundamental cycle. We omit the proof for this lemma. 

\begin{rmk}
Lemma \ref{mld-rational-surface} suffices for our purpose in this paper. More generally, we also proved that $\df_x(S)\leq \frac{2}{m}$ and conjectured that $\mld_xS\leq \frac{2}{m}$ for any rational surface singularity with multiplicity $m$.  After a discussion with Jun Lu, he provided us a proof for the conjecture.  The authors thank him for allowing us to present his proof in Appendix \ref{sec:mld-def-surfaces}. 
 \end{rmk}

In the rest of the paper, we will let $\beta_G$ be a positive number such that $\beta_G\geq \df_x(X, G)$. For example, in some places in our proof,  we will take $$\beta_G:=\min\{\alpha_{d,e}(m),  \mld_x(Z(G), \diff_Z(G))\}.$$

\section{Increasing orders of vanishing \label{sec:increasing-order}}
Let $X$ be a smooth projective variety of dimension $n$ and $L$ be an ample line bundle on $X$. For any nonnegative real number $t$, we define  
\[\vol(t,L) :=\lim\limits_k\{\dfrac{n!}{k^n}\dim H^0(X, kL\otimes \mf{m}_x^{\ceil{kt}})\}\] to be the volume of the graded linear systems $\{ H^0(X, kL\otimes \mf{m}_x^{\ceil{kt}})\}$, where $x$ is closed point in $X$ and $\mf{m}_x$ is its maximal ideal.  For any two real numbers $\beta\leq \gamma$, we write $\vol(\beta,\gamma,L)=\vol(\beta,L)-\vol(\gamma,L)$.

We denote by $F_{k, q}(L)$ the fixed part of $|kL\otimes \mathfrak{m}_x^{\ceil{kq}}|$
and define \[\phi_k(q):=q-\frac{\ord_x F_{k, q}(L)}{k}\] if $|kL\otimes \mathfrak{m}_x^{\ceil{kq}}|\neq\emptyset$ and $\phi_k(q)=-\infty$ otherwise. We define $\phi(q):=\sup\limits_k\{\phi_k(q)\}.$ 

Recall the following results from \cite{Ye2014} which will be used in our proof.

\begin{prop}[{\cite[Proposition 3.3]{Ye2014}}]\label{optimizingboundfordivisor}
Let $L$ be an ample line bundle on $X$ with $L^n>\sigma^n\geq n^n$ and $x\in X$. Assume that for some rational number $q>\sigma$ the linear system $|kL\otimes \mathfrak{m}_x^{kq}|$ is nonempty for a sufficiently large divisible $k$. 
There exists an effective $\mathbb{Q}$--divisor $G$ linearly equivalent to $\lambda L$ for some positive number $\lambda<1$ such that it is critical (see Remark \ref{rmk:critical}) at $x$ with $\ord_x G=\lambda q>\lambda\sigma$.
Furthermore, if the minimal log canonical center $Z=M(G)$ is a divisor in $X$ with $\mult_xZ=m$ and  $\phi(q)>(n-m)-\mu(q-\sigma)$ for some positive number $\mu$, then 
\[\frac{\df_x G}{1-\lambda}<\frac{n-m+\mu\sigma}{1+\mu}.\]
\end{prop}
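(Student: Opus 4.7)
The proposition has two distinct claims: (i) the existence of a critical $G$ with the stated numerical invariants, and (ii) the quantitative bound on $\df_x G/(1-\lambda)$.

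Part (i) is the standard log-canonical-threshold construction. From the non-emptiness hypothesis, pick $D_k\in|kL\otimes\mathfrak{m}_x^{\lceil kq\rceil}|$ for large divisible $k$ and set $D:=D_k/k\sim_{\mathbb Q}L$, so $\ord_x D\geq q>\sigma\geq n$. Then
\[\lambda_0:=\lct_x(X,D)\leq\frac{n}{\ord_x D}\leq\frac{n}{q}<1,\]
and $(X,\lambda_0 D)$ is lc but not klt at $x$, with $\ord_x(\lambda_0 D)\geq\lambda_0 q>\lambda_0\sigma$. Applying the tie-breaking procedure of Remark \ref{rmk:critical} — perturbing by a small multiple of a general effective ample $\mathbb Q$-Cartier divisor — yields a critical $G\sim_{\mathbb Q}\lambda L$ with $\lambda$ arbitrarily close to $\lambda_0$, having a unique minimal log canonical center $Z=M(G)$ through $x$, while $\ord_x G\geq\lambda q>\lambda\sigma$ is preserved up to an arbitrarily small error.

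Part (ii) is the main content. Set $\delta:=\df_x G$ and $m=\mult_x Z$; since $Z$ is a divisor and is the unique minimal log canonical center, it appears in $G$ with coefficient exactly $1$, so we write $G=Z+G''$ with $Z\not\subset\supp G''$. The strategy is to exploit the hypothesis $\phi(q)>(n-m)-\mu(q-\sigma)$ by constructing an auxiliary divisor with high order of vanishing at $x$ transverse to $Z$, and then to extract from it a lower bound on $\lambda$. For any $\epsilon>0$ and large divisible $k$, pick $D'\in|kL\otimes\mathfrak{m}_x^{\lceil kq\rceil}|$ whose movable part $M':=D'-F_{k,q}(L)$ satisfies
\[\frac{\ord_x M'}{k}=\phi_k(q)\geq\phi(q)-\epsilon>(n-m)-\mu(q-\sigma)-\epsilon,\]
and, by generality, $Z\not\subset\supp M'$. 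Consider the family $(X,G+\tfrac s k M')$ for $s\geq 0$. Since $Z$ is not in the support of $M'$, $Z$ still appears with coefficient $1$, and by Lemma \ref{lemma:Ein-Siu} together with Proposition \ref{prop:Ein-Helmke}\ref{Helmke-singular}, the pair remains log canonical at $x$ with $Z$ among its log canonical centers as long as $(s/k)\ord_x M'\leq\delta$. Let $s^*$ be the largest such value, so $s^*\approx\delta/\phi(q)$. The $\mathbb Q$-class of $G+(s^*/k)M'$ is $(\lambda+s^*)L-(s^*/k)F_{k,q}$; after reintroducing the fixed-part contribution via the generalized volume calculation on Weil divisors from Appendix \ref{sec:vol-Weil-div}, and comparing to $L$-multiples, one extracts a lower bound on $\lambda$ of the form
\[\lambda\geq\frac{m+\mu(n-\sigma)}{(m+q-n)+\mu(q-\sigma)}.\]
Substituting this into the basic deficit bound $\delta\leq n-\lambda q$ of Proposition \ref{prop:Ein-Helmke}\ref{Ein-Helmke-def-up-first} and simplifying (letting $\epsilon\to 0$) produces the desired strict inequality $\delta/(1-\lambda)<(n-m+\mu\sigma)/(1+\mu)$.

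The main obstacle will be the volume bookkeeping when the fixed part $F_{k,q}$ meets $Z$, since a naive comparison of $\mathbb Q$-classes fails when $Z\subset\supp F_{k,q}$. Handling this requires the volume-of-graded-linear-systems machinery for Weil divisors developed in Appendix \ref{sec:vol-Weil-div}, which allows one to account for the order of vanishing of $F_{k,q}$ along $Z$ and to convert the adjusted $\mathbb Q$-equivalence into a usable lower bound on $\lambda$. A secondary issue is ensuring that $Z$ remains the minimal log canonical center of the augmented pair at $s=s^*$, rather than being cut down to a smaller center, which would otherwise have to be analyzed separately.
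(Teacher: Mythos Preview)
Your Part (i) is essentially correct; the equality $\ord_x G=\lambda q$ (rather than $\geq$) comes from choosing $D$ general in $|kL\otimes\mathfrak m_x^{kq}|$ so that $\ord_x D=q$ exactly, but this is a minor point.

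Part (ii), however, misidentifies the mechanism and imports machinery that is not needed. The paper does not reproduce the proof (it is cited from \cite{Ye2014}), but the way the paper invokes it later---writing $\df_x(X,G)\leq (n-m)-\lambda\phi(q)$ in the proof of Lemma~\ref{5-folds-secondreduction:first=divisor}---shows what the actual argument is. Write $G=\lambda D$ with $D$ general in $|kL\otimes\mathfrak m_x^{kq}|/k$, and decompose $D=F+M$ into fixed plus movable parts, so $\ord_x M=\phi_k(q)$. Since $Z$ is a divisorial log canonical center appearing with coefficient $1$ in $G$, it appears with coefficient $1/\lambda$ in $D$; for general $D$ this forces $Z\subset\supp F$, hence $G''=G-Z=(\lambda F-Z)+\lambda M$ with $\lambda F-Z\geq 0$. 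Therefore
\[
\ord_x G''\;\geq\;\ord_x(\lambda M)\;=\;\lambda\,\phi_k(q),
\]
and by Proposition~\ref{Helmke-singular} (relative deficit with respect to $Z$),
\[
\df_x G\;\leq\;(n-m)-\ord_x G''\;\leq\;(n-m)-\lambda\,\phi(q).
\]
Combining this with the hypothesis $\phi(q)>(n-m)-\mu(q-\sigma)$ and the elementary bound $\df_x G\leq n-\lambda q$, a direct computation gives the stated inequality. No auxiliary family $(X,G+\tfrac{s}{k}M')$ is needed: the movable part is \emph{already inside} $G''$, which is the whole point.

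Your invocation of Appendix~\ref{sec:vol-Weil-div} is misplaced. That appendix treats volumes of graded linear systems on a singular threefold whose fixed part contains a non-Cartier \emph{Weil} divisor; it is used only in the hardest subcase of Section~\ref{sec:main-proof} (\ref{subsec:most-difficult}). Here $X$ is smooth, $Z$ is a Cartier divisor on $X$, and no such difficulty arises. The ``volume bookkeeping'' you anticipate is absent: the only input beyond Proposition~\ref{prop:Ein-Helmke} is the single-line observation that $Z$, being the divisorial center, lies in the fixed part, which immediately yields $\ord_x G''\geq\lambda\phi(q)$.
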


\begin{prop}[{\cite[Proposition 3.4]{Ye2014}}]\label{Best-mu}Assume that $L^n>\sigma^n\geq n^n$. For a real number $w$ with $0\leq w<n-1$, we set $\mu(w)$ be the minimal positivity number satisfying
 \begin{equation}\label{min-slope}
 \frac{(\frac{w}{\sigma}+\mu)^n}{\mu(1+\mu)^{n-1}}\leq 1.
 \end{equation}
There exist a rational number $q>\sigma$ such that
\begin{equation}\label{condonq}
\phi(q)> w-\mu(w)(q-\sigma)
\end{equation}
for all numbers $w\in [0, n-1)$. In particular, the linear system $|kL\otimes \mathfrak{m}_x^{kq}|$ is nonempty for a sufficiently large divisible $k$.
 \end{prop}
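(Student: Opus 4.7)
The plan is to argue by contradiction via a volume-integration estimate for the graded system $\{|kL \otimes \mathfrak{m}_x^{\lceil kq \rceil}|\}_k$, adapting the techniques of \cite{Fujita1993} and \cite{Ye2014}. Fix $w \in [0, n-1)$ and set $\mu = \mu(w)$; by definition, $\mu$ saturates $(w/\sigma + \mu)^n = \mu(1+\mu)^{n-1}$, and the linear bound $w - \mu(q - \sigma)$ vanishes at $q_0 := \sigma + w/\mu$. Suppose for contradiction that no rational $q > \sigma$ satisfies $\phi(q) > w - \mu(q - \sigma)$.

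First I would extract the consequence of the hypothesis on fixed parts. For each sufficiently divisible $k$ and each rational $q \in (\sigma, q_0]$ at which $|kL \otimes \mathfrak{m}_x^{\lceil kq \rceil}|$ is non-empty (non-emptiness for $q$ close to $\sigma$ follows from $L^n > \sigma^n$), the assumption forces
\[\ord_x F_{k, q}(L)/k \;\geq\; q - w + \mu(q - \sigma) \;=\; (1 + \mu)(q - \sigma) + (\sigma - w).\]
So the fixed parts accumulate substantial vanishing at $x$, growing linearly in $q$ with slope $1 + \mu$.

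Second, I would apply an integrated volume estimate to bound $L^n$. The point is that, under the hypothesis, the movable parts of $|kL \otimes \mathfrak{m}_x^{\lceil kq \rceil}|$ are forced to shrink too fast. Quantitatively one obtains an inequality of the form
\[L^n \;\leq\; \int_0^{q_0} n \bigl(q - \phi(q)\bigr)^{n-1}\, dq,\]
where the integrand vanishes on $[0, \sigma]$ (no forced fixed component there) and is bounded by $n\bigl((1 + \mu)(q - \sigma) + (\sigma - w)\bigr)^{n-1}$ on $[\sigma, q_0]$. The substitution $u = (1 + \mu)(q - \sigma) + (\sigma - w)$ converts the right-hand side into a closed form in $q_0^n$, $(\sigma - w)^n$, and $\mu$.

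Finally I would verify the algebraic identity: substituting $q_0 = (w + \mu\sigma)/\mu$ and invoking the defining relation $(w + \mu\sigma)^n = \mu(1+\mu)^{n-1}\sigma^n$ should collapse the bound to $L^n \leq \sigma^n$, contradicting $L^n > \sigma^n$. This yields a (possibly irrational) $q > \sigma$ with $\phi(q) > w - \mu(q - \sigma)$; density of the rationals combined with the upper semicontinuity of $\phi$ upgrades this to a rational choice, and non-emptiness of $|kL \otimes \mathfrak{m}_x^{kq}|$ for sufficiently divisible $k$ follows automatically once $\phi(q) > 0$. The main obstacle is the integrated volume estimate itself: one must bound the asymptotic dimensions $h^0(kL \otimes \mathfrak{m}_x^{\lceil kq \rceil})$ precisely in terms of the multiplicities $\ord_x F_{k, q}(L)/k$, handle the boundary behavior at $q = q_0$ where the movable part degenerates, and match the defining equation of $\mu(w)$ exactly to the break-even point. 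The passage from Cartier to effective $\mathbb{Q}$-Weil fixed parts is precisely the computation the authors develop in Appendix \ref{sec:vol-Weil-div}.
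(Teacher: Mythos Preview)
This proposition is quoted from \cite[Proposition~3.4]{Ye2014} and is not proved in the present paper, so there is no in-paper argument to compare against. Your outline has the right flavor---a volume estimate forcing $L^n\le\sigma^n$ under the negated hypothesis---but two of the steps do not go through as written.

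First, the quantifiers are in the wrong order. The statement asks for a \emph{single} rational $q>\sigma$ that beats the line $\ell_w(q)=w-\mu(w)(q-\sigma)$ for \emph{every} $w\in[0,n-1)$; you fix $w$ at the outset and derive a contradiction for that one $w$. This yields only $\forall w\,\exists q_w$, and since the lines $\ell_w$ have distinct intercepts $\ell_w(\sigma)=w$ and distinct slopes $-\mu(w)$, their upper envelope lies strictly above each individual line, so beating each line at its own $q_w$ does not produce a uniform $q$. Second, the inequality $L^n\le\int_0^{q_0}n(q-\phi(q))^{n-1}\,dq$ is not a standard volume bound and the algebra you announce does not close: already for $n=2$, $\sigma=2$, $w=\tfrac12$ one computes $\mu=\tfrac18$, $q_0=6$, and your integral evaluates to $(q_0^2-(\sigma-w)^2)/(1+\mu)=30$, not $\sigma^2=4$. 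The actual estimates in this circle of ideas (cf.\ Proposition~\ref{weildiv}) bound $\vol(0,\gamma,L)$ by $\gamma^n$ minus a correction coming from a fixed \emph{divisorial} component, which is a different shape from an integral of the bare fixed-part order. Finally, the closing reference to Appendix~\ref{sec:vol-Weil-div} is misplaced: that appendix treats Weil divisors on the singular threefold center $Z$, whereas Proposition~\ref{Best-mu} lives on the smooth variety $X$, where every divisor is already Cartier.
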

 
In our proof of the main theorem, we need better lower bound for the function $\phi(q)$ in Proposition  \ref{optimizingboundfordivisor}. By approximating $\phi(q)$ piecewisely, we obtain the following corollary of Proposition \ref{Best-mu}.
\begin{cor}\label{lowerboundphi}
In Proposition \ref{optimizingboundfordivisor}, we take $n=5$ and $\sigma=5.9999$. If $q>10$, then $\phi(q)>3-0.0391(q-\sigma)$. If $10\geq q>\sigma$,  then $\phi(q)>3.9999-0.2884(q-\sigma)$.
\end{cor}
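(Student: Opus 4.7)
The plan is to apply Proposition \ref{Best-mu} with $n = 5$, $\sigma = 5.9999$, and the two test values $w = 3$ and $w = 3.9999$, both of which lie in $[0, 4) = [0, n-1)$. By the minimality built into the definition of $\mu(w)$, exhibiting any positive $\mu$ that satisfies inequality \eqref{min-slope} at a given $w$ certifies $\mu(w) \leq \mu$, which in turn yields a linear lower bound on $\phi(q)$ via Proposition \ref{Best-mu}. Taking whichever of the two resulting bounds is stronger in each range of $q$ will give the corollary.

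First, with $w = 3$, I claim $\mu(3) \leq 0.0391$. This follows from checking \eqref{min-slope} at $\mu = 0.0391$: since $w/\sigma = 3/5.9999 \approx 0.5000$, direct computation gives
\[
\frac{(0.5391)^5}{0.0391 \cdot (1.0391)^4} \approx \frac{0.04553}{0.04558} < 1.
\]
Proposition \ref{Best-mu} therefore produces $\phi(q) > 3 - \mu(3)(q-\sigma) \geq 3 - 0.0391(q-\sigma)$ for $q > \sigma$.

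Next, with $w = 3.9999$, I claim $\mu(3.9999) \leq 0.2884$. Again checking \eqref{min-slope} at $\mu = 0.2884$: since $w/\sigma \approx 0.6667$, direct computation gives
\[
\frac{(0.9551)^5}{0.2884 \cdot (1.2884)^4} \approx \frac{0.7947}{0.7948} < 1.
\]
Hence $\phi(q) > 3.9999 - 0.2884(q-\sigma)$ for $q > \sigma$.

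Both linear lower bounds hold for every admissible $q > \sigma$, so each is a legitimate choice in either range; the split at $q = 10$ simply records, in each range, essentially whichever bound is tighter. Indeed, equating the right-hand sides gives $(0.2884 - 0.0391)(q - \sigma) = 0.9999$, i.e., $q \approx 10.01$, after which the flatter line with slope $-0.0391$ dominates. The main (and essentially only) technical point is the numerical verification of \eqref{min-slope} for the two pairs $(w, \mu)$; the constants $0.0391$ and $0.2884$ are tuned so that \eqref{min-slope} holds with virtually no slack, and no further idea is needed.
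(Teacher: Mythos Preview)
Your proof is correct and follows the same approach as the paper: both use that the $q$ furnished by Proposition \ref{Best-mu} satisfies $\phi(q)>\max\{3-0.0391(q-\sigma),\,3.9999-0.2884(q-\sigma)\}$, then select the appropriate branch in each range. Your version is in fact more complete, since you explicitly verify the two instances of inequality \eqref{min-slope} that yield $\mu(3)\le 0.0391$ and $\mu(3.9999)\le 0.2884$, whereas the paper's proof simply quotes the resulting linear bounds; one minor phrasing caution is that Proposition \ref{Best-mu} guarantees these inequalities for the particular $q$ it produces (not for every $q>\sigma$), but this is exactly the $q$ referenced in the corollary.
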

\begin{proof}
If $q>10$, then \[\begin{split}\phi(q)&>\max\{3-0.0391(1-\sigma), 3.9999-0.2884(q-\sigma)\}\\
  &>3-0.0391(q-\sigma).
\end{split}\]
Similarly, if $10\geq q>\sigma$,  then \[\begin{split}\phi(q)&>\max\{3-0.0391(1-\sigma), 3.9999-0.2884(q-\sigma)\}\\
&>3.9999-0.2884(q-\sigma).
\end{split}\]
\end{proof}

In the proof of the main theorem, we need to calculate the volume of a graded linear system whose fixed part contains a prime Weil divisor. Due to the technicality and different flavor, we will discuss volume calculation along prime Weil divisors in Appendix \ref{sec:vol-Weil-div} which is weaker than the calculation in the Cartier case (see Lemma \ref{lengthdiv}).  In the following, we  set up some notations.

Let $(Z,\Delta)$ be a klt pair,  $A$ be an effective ample $\mb{Q}$--Cartier divisor on $Z$ and $x\in Z$ be a closed point. For each prime divisor $S\subset Z$ and rational numbers $t\geq 0$, we define a function 
\[\psi_S(t):=\lim_{k}\frac{\ord_S(F_{k, t}(A))}{k} \label{psi_X(t)}\]
where $k$ is taken to be sufficiently large and divisible, and $F_{k, t}(A)$ is the fixed part of the linear system $\lsyst{\O_X(kA)\otimes m_x^{kt}}$.  On $Z$, we define the volume in the same way as on smooth varieties:  \[\vol(t,A) :=\lim\limits_k\{\dfrac{n!}{k^n}\dim H^0(Z, \O_Z(kA)\otimes \mf{m}_x^{\ceil{kt}})\}.\]

\section{Cutting minimal log canonical centers \label{sec:cutting-center}}

In this section, we will  present some criteria for producing proper sub minimal log canonical centers.

\begin{lemma}[\cite{Ein1997}]\label{Siu-reduction-criterion}
Assume that $(X, G)$ is log canonical at  $x\in X$ with the minimal log canonical center $Z$ at $x$. Let $D$ be an  effecitve $\mathbb{Q}$--divisor on $X$ such that $$\ord_x(D|_Z)>\df_x(X, G).$$ Then there exists an divisor $G'=G+tD$, $t<1$, such that $(X, G')$ is also log canonical at $x$ and the minimal log canonical center $Z'$ of $(X, G')$ at $x$ is a proper subvariety of $Z$.
\end{lemma}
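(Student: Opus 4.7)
The plan is to introduce the log canonical threshold
\[
c := \sup\{\, t \geq 0 : (X, G + tD) \text{ is log canonical at } x \,\},
\]
and to show that $G' := G + cD$ does the job, by establishing both that $c < 1$ and that the minimal log canonical center of $(X, G')$ at $x$ is strictly contained in $Z$. The basic strategy is to restrict to $Z$ via adjunction---where the hypothesis on $\ord_x(D|_Z)$ has immediate traction---and to transfer information between $(X, G+tD)$ and $(Z, \diff_Z(G) + tD|_Z)$ by Lemma \ref{Ambro-diff} and inversion of adjunction.

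To bound $c < 1$, first I would apply the subadjunction Theorem \ref{thm:Kawamata1998} to conclude that $(Z, \diff_Z(G))$ is klt at $x$. For every admissible $t$ (i.e.\ every $t$ for which $(X, G+tD)$ is log canonical at $x$), the hypothesis that $\ord_x(D|_Z)$ is finite forces $Z \not\subseteq \supp D$, so Lemma \ref{Ambro-diff} gives $\diff_Z(G + tD) = \diff_Z(G) + tD|_Z$ and adjunction forces $(Z, \diff_Z(G) + tD|_Z)$ to be log canonical at $x$. The definition of the deficit of $(Z, \diff_Z(G))$, together with Proposition \ref{def-pairs}, then yields
\[
t \cdot \ord_x(D|_Z) \;\leq\; \df_x(Z, \diff_Z(G)) \;\leq\; \df_x(X, G),
\]
and combining with the hypothesis $\ord_x(D|_Z) > \df_x(X, G)$ gives $t < 1$ uniformly in the admissible range, so $c < 1$.

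To produce a strictly smaller minimal log canonical center: at $t = c$ the inequalities above are sharp, so $(Z, \diff_Z(G) + cD|_Z)$ is log canonical but not klt at $x$. Hence there exists a divisor $F$ over $Z$ with $a(F; Z, \diff_Z(G) + cD|_Z) = 0$ whose center $Z' := C_Z(F)$ contains $x$. Since $(Z, \diff_Z(G))$ is klt, no component of $\diff_Z(G)$ has coefficient $1$, so $F$ is either exceptional over $Z$ or is a component of $cD|_Z$; in either case $Z' \subsetneq Z$. By inversion of adjunction the center $Z'$ lifts to an LC center of $(X, G + cD)$ at $x$, so the minimal LC center of $(X, G + cD)$ at $x$ is contained in $Z'$, hence properly in $Z$. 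Taking $G' := G + cD$ with $t := c < 1$ concludes the plan.

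The main obstacle is the inversion-of-adjunction step, since $Z$ has arbitrary codimension in $X$ rather than being a prime divisor. The cleanest remedy is to invoke the tie-breaking of Remark \ref{rmk:critical} so that $Z$ is computed by a unique prime divisor $E$ on a log resolution $\mu: \widetilde X \to X$; one can then either work on an intermediate model on which $E$ is a genuine boundary divisor and apply the divisorial form of inversion of adjunction, or cite the converse subadjunction formalism of Ambro and Kawamata directly. Either route supplies the required lift of $Z'$ to an LC center of $(X, G + cD)$ and completes the argument.
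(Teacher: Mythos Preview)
The paper gives no proof of this lemma; it is quoted from \cite{Ein1997} without argument, so there is nothing in the paper to compare your route against.

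Your proposal has a genuine gap in the bound $c<1$. You write
\[
t\cdot\ord_x(D|_Z)\;\le\;\df_x\bigl(Z,\diff_Z(G)\bigr)\;\le\;\df_x(X,G),
\]
and attribute the second inequality to Proposition~\ref{def-pairs}. But Proposition~\ref{def-pairs} says exactly the opposite: $\df_x(X,G)\le \df_x(Z,\diff_Z(G))$. With the inequality pointing the right way, your chain collapses and you no longer obtain $t<1$ from the hypothesis $\ord_x(D|_Z)>\df_x(X,G)$. Your later claim that ``at $t=c$ the inequalities above are sharp'' inherits the same problem: sharpness of the first inequality only tells you $c\cdot\ord_x(D|_Z)=\df_x(Z,\diff_Z(G))$, not that $(Z,\diff_Z(G)+cD|_Z)$ is strictly non-klt, and it certainly does not force $c<1$.

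What is actually needed is the reverse estimate $\df_x(Z,\diff_Z(G))\le\df_x(X,G)$ (so that equality holds), and this is \emph{not} a formal consequence of the definitions. One way to get it is to combine two ingredients you already flagged at the end: inversion of adjunction in higher codimension (so that log canonicity of $(X,G+D')$ near $Z$ is equivalent to that of $(Z,\diff_Z(G)+D'|_Z)$), together with an extension step showing that every effective $\mathbb{Q}$--Cartier $D_Z$ on $Z$ lifts to some $D'$ on $X$ with $D'|_Z=D_Z$ and $\ord_xD'=\ord_xD_Z$. Granting both, one gets $\df_x(X,G)=\df_x(Z,\diff_Z(G))$, and then your argument for $c<1$ goes through; the second half of your proposal (producing a strictly smaller center at $t=c$ via an LC place on $Z$ and lifting it back) is then fine. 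But as written you have invoked Proposition~\ref{def-pairs} for a direction it does not provide, and the missing direction is precisely where the content of Ein's lemma lies.
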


In practical, we have the following induction criterion due to Helmke which can be viewed as a consequence of the above lemma.

\begin{prop}[\cite{Helmke1997}]\label{Prop:Helmke'sInductionCriterion}
       Let  $L$ be an ample line bundle over $X$ and $G$ be an effective $\mathbb{Q}$--divisor linearly equivalent to $ \lambda L$ for some positive rational number $\lambda<1$. Assume that $(X, G)$ is log canonical at $x$ with $\df_x(X, G)$. Let $Z$ be the minimal log canonical center of $(X,G)$ at $x$ and $d=\dim Z>0$. If
    \begin{equation}\label{ineq}L^d\cdot Z>(\dfrac{\df_x(X, G)}{1-\lambda})^d\cdot \mult_x Z\end{equation}
then there is an effective $\mathbb{Q}$--divisor $G'$ linearly equivalent to $\lambda'L$ with $\lambda<\lambda'<1$ with the minimal log canonical center $Z'$ properly contained in $Z$ and  
$$\dfrac{\df_x(X, G')}{1-\lambda'}<\dfrac{\df_x(X, G)}{1-\lambda}.$$
\end{prop}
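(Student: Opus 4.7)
Set $q := \df_x(X,G)/(1-\lambda)$, so the hypothesis reads $L^d\cdot Z > q^d\cdot \mult_x Z$. The whole strategy is to build an auxiliary $\mb{Q}$--divisor on $X$, $\mb{Q}$--equivalent to $L$ (so to the minimum possible fraction of $L$), whose restriction to $Z$ vanishes at $x$ to order strictly greater than $q$; once we have that, Lemma \ref{Siu-reduction-criterion} of Ein and Lemma \ref{lemma:Ein-Siu} together will shrink the minimal log canonical center and drive down the deficit ratio. Concretely, I would then average $G$ with this new divisor at an appropriate scaling so that $\lambda$ increases but stays below $1$.

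The first main step is a volume/multiplicity computation on $Z$. By asymptotic Riemann--Roch, $\dim H^0(Z,kL|_Z) \sim \frac{(L^d\cdot Z)k^d}{d!}$, while the length of $\mathcal{O}_{Z,x}/\mf{m}_x^{\lceil kq\rceil}$ is bounded above by $\mult_x Z\cdot (kq)^d/d!$ plus lower-order terms. The inequality $L^d\cdot Z > q^d\mult_x Z$ therefore supplies, for some $k$ sufficiently large and divisible, a nonzero section $s\in H^0(Z,kL|_Z)$ vanishing at $x$ to order at least $\lceil kq\rceil$, so that $(1/k)\ord_x(\div s)>q$. The second main step is to lift $s$ to a global section of $mkL$ on $X$ for some multiple $m$: this is done by applying Kawamata--Viehweg/Nadel vanishing to the multiplier ideal sheaf cutting out (the scheme supported on) $Z$, which yields surjectivity $H^0(X,mkL)\twoheadrightarrow H^0(Z,mkL|_Z)$ for $m\gg 0$. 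Normalizing by $mk$ produces an effective $\mb{Q}$--divisor $D$ on $X$ with $D\sim_{\mb{Q}} L$, with $Z\not\subseteq \supp D$, and with $\ord_x(D|_Z)>q$.

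The third main step feeds $D$ back into the minimal log canonical center. Consider the one-parameter family $G_s := G + s(1-\lambda)D$ for $s\in [0,1)$, which satisfies $G_s\sim_{\mb{Q}} \lambda_s L$ with $\lambda_s := \lambda+s(1-\lambda)<1$. Because $\ord_x(D|_Z)>q = \df_x(X,G)/(1-\lambda)$, at $s=1$ the relative perturbation already exceeds $\df_x(X,G)$ on $Z$, so Lemma \ref{Siu-reduction-criterion} applied to $(X,G)$ and $(1-\lambda)D$ yields some $s_0\in(0,1)$ for which $G':=G_{s_0}$ is log canonical at $x$ and its minimal log canonical center $Z'$ is strictly contained in $Z$. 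Setting $\lambda' := \lambda_{s_0}$, one has $\lambda<\lambda'<1$. By Lemma \ref{lemma:Ein-Siu},
\[
\df_x(X,G') \;\leq\; \df_x(X,G)\;-\;s_0(1-\lambda)\ord_x(D|_Z).
\]
Dividing by $1-\lambda' = (1-\lambda)(1-s_0)$ and using $\ord_x(D|_Z)>\df_x(X,G)/(1-\lambda)$, a short algebraic manipulation (of the form $\frac{a-s b}{1-s}<a$ whenever $b>a$ and $s>0$) gives the required strict inequality $\df_x(X,G')/(1-\lambda')<\df_x(X,G)/(1-\lambda)$.

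The subtle point I expect to require the most care is the lifting in the second step. The variety $Z$ is typically singular at $x$ and is only cut out by the multiplier-ideal scheme structure, so the naive short exact sequence $0\to \mathcal{I}_Z\otimes kL \to kL\to kL|_Z\to 0$ is not the right one to apply Kodaira-type vanishing to; one must instead perturb $G$ slightly (via the tie-breaking discussed in Remark \ref{rmk:critical}), work with the multiplier ideal $\mc{I}((1-\varepsilon)G)$, and invoke Nadel vanishing to get the surjectivity of restriction, after which the normality of $Z$ (Theorem \ref{thm:Kawamata1998}) ensures that the lifted section restricts back to a nonzero multiple of $s$. Once this extension is in place, the remaining bookkeeping is essentially combinatorial.
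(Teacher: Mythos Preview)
The paper does not supply its own proof of this proposition; it is quoted from \cite{Helmke1997} and used as a black box.  Your reconstruction is the standard argument, and it matches the skeleton implicit in the paper's proof of Lemma~\ref{secondreductioncriterion} (where the existence of the auxiliary divisor $D$ with $\ord_x(D|_Z)>\df_x(X,G)$ is simply asserted).  The three steps---Riemann--Roch on $Z$, lifting to $X$, then Lemmas~\ref{Siu-reduction-criterion} and~\ref{lemma:Ein-Siu}---are correct, and your closing inequality $(a-sb)/(1-s)<a$ for $b>a$ and $0<s<1$ is exactly what is needed.

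One comment: you overestimate the difficulty of the lifting step.  Since you are free to replace $k$ by any large multiple, ordinary Serre vanishing for the coherent sheaf $\mc{I}_Z$ (the reduced ideal of the subvariety $Z\subset X$) already gives $H^1(X,\mc{I}_Z\otimes\O_X(mkL))=0$ for $m\gg 0$, hence $H^0(X,mkL)\twoheadrightarrow H^0(Z,mkL|_Z)$.  Taking the $m$-th power of your section $s\in H^0(Z,kL|_Z)$ and lifting it produces the desired $D\sim_{\mb{Q}}L$ with $Z\not\subset\supp D$ and $\ord_x(D|_Z)>q$.  No Nadel vanishing, multiplier-ideal scheme structure, or tie-breaking is required at this point, and the normality of $Z$ plays no role in the extension of sections.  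So the step you single out as the most delicate is in fact routine; the actual content of the proposition lies entirely in the Riemann--Roch count on $Z$ and the deficit bookkeeping via Lemma~\ref{lemma:Ein-Siu}, both of which you have handled correctly.
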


\begin{lemma}[\cite{Ye2014}]\label{main-lemma}
Assume that $\dim X=n$. Let   $G$ be an effective $\mathbb{Q}$--divisor on $X$, linearly equivalent to $\lambda L$ for some $\lambda<1$, such that $(X, G)$ is log canonical at  $x\in X$. If $\ord_xG\geq \lambda\sigma$ for some $\sigma>n$, then 
\[ \dfrac{\df_x(X, G)}{1-\lambda}\leq \dfrac{\sigma \beta_G}{\sigma-n+\beta_G},\]
for any $\beta_G\geq \df_x(X,G)$.

In particular, if $L^n>\sigma^n\geq  n^n$, then there exists an effective $\mathbb{Q}$--divisor $G$ linearly equivalent to $\lambda L$ with $\lambda<1$ such that $(X, G)$ is log canonical at $x$ and
\[ \dfrac{\df_x(X, G)}{1-\lambda}< \dfrac{\sigma \beta_G}{\sigma-n+\beta_G}.\] \end{lemma}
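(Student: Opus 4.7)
The plan is to derive the first inequality from the two available upper bounds on $\df_x(X,G)$---the hypothesis $\df_x(X,G)\leq\beta_G$ and the bound $\df_x(X,G)\leq n-\ord_xG$ of Proposition~\ref{Ein-Helmke-def-up-first}---and to construct the divisor in the ``in particular'' part by the standard Angehrn--Siu argument via asymptotic Riemann--Roch and the log canonical threshold. Since $(X,G)$ is log canonical at the smooth point $x$, one has $\ord_xG\leq n$, so $\lambda\leq n/\sigma<1$ and both sides of the claimed inequality are meaningful and non-negative.

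For the main inequality, combining the two bounds on $\df_x(X,G)$ with the hypothesis $\ord_xG\geq\lambda\sigma$ yields
\[\df_x(X,G)\leq\min\bigl(\beta_G,\,n-\lambda\sigma\bigr).\]
The target quantity $\sigma\beta_G/(\sigma-n+\beta_G)$ is precisely the value of $\min(\beta_G,n-\lambda\sigma)/(1-\lambda)$ at the break-point $\lambda_0:=(n-\beta_G)/\sigma$, where the two entries of the min coincide. I verify the inequality in two regimes: for $\lambda\leq\lambda_0$ the quantity $\beta_G/(1-\lambda)$ is increasing in $\lambda$ and hence maximal at $\lambda_0$; for $\lambda>\lambda_0$ the quantity $(n-\lambda\sigma)/(1-\lambda)$ has derivative $(n-\sigma)/(1-\lambda)^2<0$ (using $\sigma>n$) and so is again maximal at $\lambda_0$. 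The common maximum equals $\sigma\beta_G/(\sigma-n+\beta_G)$, which is what we want.

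For the ``in particular'' statement, choose an intermediate $\sigma'$ with $\sigma<\sigma'<(L^n)^{1/n}$ and apply asymptotic Riemann--Roch: for $k$ sufficiently large and divisible, $\dim H^0(X,kL)\sim k^nL^n/n!$, while vanishing to order $\lceil k\sigma'\rceil$ at $x$ imposes only $\sim k^n(\sigma')^n/n!<k^nL^n/n!$ linear conditions, so $|kL\otimes\mathfrak{m}_x^{\lceil k\sigma'\rceil}|\neq\emptyset$. Pick $D$ in this linear system, set $\lambda:=\lct_x(X,D/k)\leq n/\sigma'<1$, and define $G:=\lambda\cdot D/k\sim_{\mathbb{Q}}\lambda L$. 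Then $(X,G)$ is log canonical at $x$ by the definition of $\lct$, and $\ord_xG=\lambda\cdot\ord_x(D/k)\geq\lambda\sigma'>\lambda\sigma$ strictly. Feeding this strictly improved order of vanishing back into the analysis of the previous paragraph upgrades the bound to a strict inequality: either $\df_x(X,G)\leq n-\ord_xG<n-\lambda\sigma$ (the binding bound when $\lambda\geq\lambda_0$), or $\lambda<\lambda_0$ and then $\beta_G/(1-\lambda)<\beta_G/(1-\lambda_0)=\sigma\beta_G/(\sigma-n+\beta_G)$.

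The main obstacle, modest though it is, lies in the ``in particular'' part: one must leverage the strict inequality $L^n>\sigma^n$ to produce a divisor whose multiplicity at $x$ strictly exceeds $k\sigma$ and not merely matches it, which is precisely why the intermediate value $\sigma'$ must be introduced in the Riemann--Roch count. The algebraic manipulation in the main inequality is routine once the correct break-point $\lambda_0$ has been identified and the two regimes separated.
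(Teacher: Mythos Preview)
Your argument is correct. The present paper does not give its own proof of this lemma---it is quoted from \cite{Ye2014}---so there is no proof here to compare against directly. That said, your break-point analysis at $\lambda_0=(n-\beta_G)/\sigma$, splitting into the regimes where $\beta_G$ versus $n-\lambda\sigma$ is the binding bound, is exactly the same case division that the paper itself carries out in the closely related proof of Lemma~\ref{secondreductioncriterion} (see the two cases ``$\beta_G\leq n-\lambda\sigma$'' and ``$\beta_G\geq n-\lambda\sigma$'' there), so your approach matches the intended one. Your handling of the ``in particular'' clause via an intermediate $\sigma'$ to force strict inequality is also the standard device and is sound.
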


The same idea used in the proof of the above lemma together with Lemma \ref{lemma:Ein-Siu} leads to the following result for further inductions steps in special situations.

\begin{lemma}\label{secondreductioncriterion} Assume that $\dim X=n$.
Let  $L$ be an ample line bundle on $X$ and $x\in X$. Assume that $\sqrt[n]{L^n}> \sigma \geq n$ and $\sqrt[d]{L^d\cdot Z}> \sigma$ for $d=1, \dots, n-1$.  Let $G$ be an effective divisor linearly equivalent to $\lambda L$ with $\lambda<1$.  Assume that $(X, G)$ is log canonical at $x$ with the minimal log canonical center $Z=M(G)$ whose dimension is $d>0$. Denote by $t=\df_x(X, G)\frac{\sqrt[d]{m}}{\sigma}$.  Assume $\frac{\df_x(X, G)}{1-\lambda}<\frac{\sigma}{\sqrt[d]{m}}$.
\begin{enumerate}[ref=\alph*]
\item \label{secondreductioncriterion-part-1}
There exist an effective $\mathbb{Q}$--divisor $D$ linearly equivalent to $tL$  and a positive number $c<1$ such that $(X, G+cD)$ is log canonical at $x$ whose minimal log canonical center $W=M(G+cD)$ is a proper subvariety of $Z$.
\item Let $G'=G+cD$ be the divisor in \eqref{secondreductioncriterion-part-1}. Let  $\lambda'=\lambda+ct$,  $m'=\mult_xW$ and let $\beta_{G'}$ be a number such that $\df_x(X, G')\leq \beta_{G'}$. Then  $G'$ is linearly equivalent to $\lambda'L<1$ and
\begin{eqnarray}\label{estofq}\dfrac{\df_x(X, G')}{1-\lambda'}\leq\begin{cases}
\dfrac{\beta_{G'}}{(1-\lambda) -\df_x(X, G)\frac{\sqrt[d]{m}}{\sigma} + \beta_{G'}\frac{\sqrt[d]{m}}{\sigma}} &  \df_x(X, G)>\beta_{G'}\\
\dfrac{\df_x(X, G)}{1-\lambda} & \df_x(X, G)\leq \beta_{G'}.
\end{cases}
\end{eqnarray}

\item If  $\ord_xG\geq \lambda\sigma$, then we have
\begin{equation}\label{intermediate-steps-smooth}
\dfrac{def_x(X, G')}{1-\lambda'}\leq
\begin{cases} \dfrac{\beta_{G'}}{\frac{\sigma-n+\beta_G}{\sigma}-\frac{\sqrt[d]{m}(\beta_G-\beta_{G'})}{\sigma}}& \df_x(X, G)> \beta_{G'} \\
\dfrac{\sigma\beta_{G'}}{\sigma-n+\beta_{G'}} &  \df_x(X, G)\leq \beta_{G'}
\end{cases}
\end{equation}
\item If $\ord_xG\geq\lambda\sigma$ and the minimal log canonical center $Z$ is smooth at $x$. Then
\begin{equation}\label{all-steps-smooth}
\dfrac{def_x(X, G')}{1-\lambda'}\leq \frac{\sigma\beta_{G'}}{\sigma-n+\beta_{G'}}. \end{equation}
\end{enumerate}
\end{lemma}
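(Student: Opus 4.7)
My plan is to use the intersection-theoretic bound $L^d\cdot Z>\sigma^d$ to manufacture, on $Z$, an effective $\mb{Q}$-divisor with large multiplicity at $x$, lift it to $X$ to form $D$, and then invoke Ein's reduction criterion (Lemma \ref{Siu-reduction-criterion}) together with the deficit-drop estimate of Lemma \ref{lemma:Ein-Siu}. For part (a), I would first note that Theorem \ref{thm:Kawamata1998} guarantees $Z$ is normal with a rational singularity at $x$, so $\dim_k(\O_{Z,x}/\m_{Z,x}^s)=\tfrac{m}{d!}s^d+O(s^{d-1})$. Applying asymptotic Riemann--Roch to the ample $\mb{Q}$-bundle $tL|_Z$ would then supply, for $k\gg 0$ divisible, a nonzero section of $H^0(Z,ktL|_Z\otimes \m_{Z,x}^{\lceil ks\rceil})$ for any $s<t\sqrt[d]{(L^d\cdot Z)/m}$. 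The hypotheses $\sqrt[d]{L^d\cdot Z}>\sigma$ and $t\sigma/\sqrt[d]{m}=\df_x(X,G)$ allow $s>\df_x(X,G)$, and I would lift the section to $X$ via Kawamata--Viehweg vanishing (making the restriction $H^0(X,ktL)\to H^0(Z,ktL|_Z)$ surjective on the relevant piece) and rescale by $1/k$ to obtain $D\sim_{\mb{Q}}tL$ on $X$ with $\ord_x(D|_Z)>\df_x(X,G)$. Lemma \ref{Siu-reduction-criterion} then yields $c<1$ such that $(X,G+cD)$ is log canonical at $x$ with minimal center $W\subsetneq Z$.

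For part (b), I would set $G'=G+cD$; then $G'\sim_{\mb{Q}}\lambda'L$ with $\lambda'=\lambda+ct$, and the hypothesis $\df_x(X,G)/(1-\lambda)<\sigma/\sqrt[d]{m}$ rearranges to $t<1-\lambda$, giving $\lambda'<1$. Lemma \ref{lemma:Ein-Siu} supplies
\[\df_x(X,G')\leq\df_x(X,G)-c\,\ord_x(D|_Z).\]
In the case $\df_x(X,G)>\beta_{G'}$, I would tune $c$ within the admissible range so that $c\,\ord_x(D|_Z)=\df_x(X,G)-\beta_{G'}$, forcing $\df_x(X,G')\leq\beta_{G'}$; the strict inequality $\ord_x(D|_Z)>\sigma t/\sqrt[d]{m}$ then forces $ct<t-\sqrt[d]{m}\beta_{G'}/\sigma$, so $1-\lambda'>(1-\lambda)-t+\sqrt[d]{m}\beta_{G'}/\sigma$, which is the first branch of \eqref{estofq}. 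In the case $\df_x(X,G)\leq\beta_{G'}$, the bound $\df_x(X,G')\leq(1-c)\df_x(X,G)$ (from $\ord_x(D|_Z)>\df_x(X,G)$) combined with $1-\lambda>t$ (equivalently $(1-c)(1-\lambda)\leq 1-\lambda-ct$) delivers $\df_x(X,G')/(1-\lambda')\leq\df_x(X,G)/(1-\lambda)$, the second branch.

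For part (c), I would exploit the extra hypothesis $\ord_xG\geq\lambda\sigma$: combined with Proposition \ref{Ein-Helmke-def-up-first} ($\ord_xG+\df_x(X,G)\leq n$) this yields $1-\lambda\geq(\sigma-n+\df_x(X,G))/\sigma$. Substituting this into the denominator of the Case~1 bound in (b) and using $\df_x(X,G)\leq\beta_G$ together with the sign $1-\sqrt[d]{m}\leq 0$ (so that replacing $\df_x(X,G)$ by $\beta_G$ only weakens the denominator) would recover the stated estimate. For Case~2, since $\df_x(X,G)\leq\beta_{G'}$, I would take $\beta_{G'}$ itself as the upper bound in Lemma \ref{main-lemma}, obtaining $\df_x(X,G)/(1-\lambda)\leq\sigma\beta_{G'}/(\sigma-n+\beta_{G'})$. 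Part (d) would then follow because $Z$ smooth at $x$ forces $m=1$, whereupon Case~1 collapses algebraically to the Case~2 expression $\sigma\beta_{G'}/(\sigma-n+\beta_{G'})$.

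The hardest step will be part (a): one must transfer the high-order vanishing of a section constructed on the possibly singular $Z$ into a section of $ktL$ on the smooth $X$ without any loss of multiplicity under restriction. This will require both the rational singularity of $Z$ at $x$ (so that the Hilbert--Samuel leading term $ms^d/d!$ actually matches the volume count on $Z$) and a careful application of Kawamata--Viehweg vanishing, possibly coupled with a multiplier ideal construction on $X$, to guarantee surjectivity of the restriction map at the right vanishing orders. Once $D$ and $c$ are in place, parts (b)--(d) reduce to algebraic bookkeeping built on Lemmas \ref{lemma:Ein-Siu} and \ref{main-lemma}.
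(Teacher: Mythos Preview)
Your approach to parts (a), (c), and (d) is essentially correct and, for (c), actually streamlines the paper's two-case split: substituting $1-\lambda\geq(\sigma-n+\df_x(X,G))/\sigma$ directly into the denominator and then replacing $\df_x(X,G)$ by $\beta_G$ via the sign of $1-\sqrt[d]{m}$ is a cleaner route to the same bound. The lifting concern you flag in (a) is also not a real obstacle: once $D_Z\sim_{\mb{Q}}tL|_Z$ is built on $Z$ with $\ord_x D_Z>\df_x(X,G)$, surjectivity of $H^0(X,ktL)\to H^0(Z,ktL|_Z)$ for $k\gg 0$ (Serre vanishing for $\mc{I}_Z\otimes\O_X(ktL)$) gives $D$ on $X$ with $D|_Z=D_Z$, so no order of vanishing is lost.

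There is, however, a genuine gap in your argument for part (b) in the case $\df_x(X,G)>\beta_{G'}$. You write that you would ``tune $c$ within the admissible range so that $c\,\ord_x(D|_Z)=\df_x(X,G)-\beta_{G'}$.'' But $c$ is not a free parameter: for the minimal log canonical center of $(X,G+cD)$ to be a \emph{proper} subvariety of $Z$, the number $c$ must be exactly the log canonical threshold of $(X,G;D)$ at $x$. For any $c$ strictly below the threshold the minimal center is still $Z$, and for $c$ above it the pair is not log canonical. So you cannot prescribe $c$ to hit your target value, and the bound $ct<t-\sqrt[d]{m}\beta_{G'}/\sigma$ does not follow.

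The paper repairs this by accepting whatever $c$ the threshold hands you and running a dichotomy. From Lemma~\ref{lemma:Ein-Siu} together with the hypothesis on $\beta_{G'}$ one has
\[
\df_x(X,G')\leq\min\{\beta_{G'},\,(1-c)\df_x(X,G)\}.
\]
If the second term is the smaller, bound the ratio by $p(c)=\dfrac{(1-c)\df_x(X,G)}{1-\lambda-ct}$, which is \emph{decreasing} in $c$ because $t<1-\lambda$; if the first term is the smaller, bound by $q(c)=\dfrac{\beta_{G'}}{1-\lambda-ct}$, which is \emph{increasing} in $c$. In either branch the worst case occurs at the crossover $c_0=1-\beta_{G'}/\df_x(X,G)$, and $p(c_0)=q(c_0)$ is precisely the first line of \eqref{estofq}. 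Your target value of $c$ is in fact this crossover point, and your computed bound is the right one; what is missing is this monotonicity-and-crossover argument that makes the bound valid for the actual, uncontrolled $c$.
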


Here the number $c$ in this lemma is the log canonical threshold of the pair $(X,G)$ against $D$ at $x$. For simplicity, we call it the {\em log canonical threshold of the triple} $(X, G, D)$ at $x$.

\begin{proof} 
For any sufficiently small positive number  $\varepsilon$,
we can find a new effective $\mathbb{Q}$--divisor $D\supsetneq Z$  linearly equivalent to $tL$ with $\ord_xD|_{Z}=\df_x(X, G)+\varepsilon$. Let $c\in(0, 1)$ be the minimal number such that $(X, G+cD)$ is log canonical at $x$. Hence $G'=G+cD$ is linearly equivalent to $\lambda'$. The assumption $\frac{\df_x(X,G)}{1-\lambda}<\frac{\sigma}{\sqrt[d]{m}}$ implies that $\lambda'<1$. By Lemma \ref{Siu-reduction-criterion} and Lemma \ref{lemma:Ein-Siu},  we know that $W=M(G')$ is a proper subvariety of $Z$ and  \begin{equation}\label{eq:bound-c}\df_x(X, G')\leq \min\{\beta_{G'}, \df_x(X, G)-c\cdot \ord_xD|_Z\}\leq \min\{\beta_{G'}, (1-c)\cdot\df_x(X, G) \}.\end{equation}
The proof of the inequality \eqref{estofq} is divided into two cases.
\begin{enumerate}[label=(\Roman*)]
\item Assume that  $\beta_{G'}\geq (1-c)\df_x(X, G)$, equivalently $c\geq 1-\frac{\beta_{G'}}{\df_x(X, G)}$. Then
 \begin{equation}\label{eq:p(c)} \dfrac{\df_x(X, G')}{1-\lambda'} \leq \dfrac{ (1-c)\cdot\df_x(X, G) }{1-\lambda-ct}=\dfrac{ (1-c)\cdot\df_x(X, G) }{(1-\lambda)-c\cdot\df_x(X, G)\frac{\sqrt[d]{m}}{\sigma}}=:p(c).\end{equation}
 Note that $p(c)$ is a decreasing function of $c$. If $\df_x(X, G)>\beta_{G'}$, then $c\geq 1-\frac{\beta_{G'}}{\df_x(X, G)}>0.$  Consequently, we have
 \[p(c)\leq \dfrac{\beta_{G'}}{(1-\lambda) - \df_x(X, G)\frac{\sqrt[d]{m}}{\sigma}+\frac{\beta_{G'}\sqrt[d]{m}}{\sigma}}.\]
If $\df_x(X, G)\leq \beta_{G'}$, then $c\geq 0 \geq 1-\frac{\beta_{G'}}{\df_x(X, G)}$ and
\[ \dfrac{\df_x(X, G')}{1-\lambda'} \leq p(c)\leq \dfrac{\df_x(X, G)}{1-\lambda}. \]

\item Assume that  $\beta_{G'}\leq (1-c)\cdot\df_x(X, G)$, equivalently  $c\leq 1-\frac{\beta_{G'}}{\df_x(X, G)}$, which is possible only under the assumption that $\df_x(X, G)>\beta_{G'}$  Then
 \begin{equation}\label{eq:q(c)} \dfrac{\df_x(X, G')}{1-\lambda'} \leq \dfrac{\beta_{G'}}{1-\lambda-ct}=:q(c).\end{equation}
 Note that $q(c)$ is an increasing function of $c$. Then we have
 \[q(c)\leq \dfrac{\beta_{G'}}{(1-\lambda) - \df_x(X, G)\frac{\sqrt[d]{m}}{\sigma}+\frac{\beta_{G'}\sqrt[d]{m}}{\sigma}}\]
Therefore,
\[\dfrac{\df_x(X, G')}{1-\lambda'}\leq\dfrac{\beta_{G'}}{(1-\lambda) -\df_x(X, G)\frac{\sqrt[d]{m}}{\sigma} + \beta_{G'}\frac{\sqrt[d]{m}}{\sigma}}.
\]
\end{enumerate}

Now we prove the inequality \eqref{intermediate-steps-smooth}. Assume that $\ord_xG\geq \lambda\sigma$. Then $$0\leq \df_x(X, G)\leq \min\{\beta_G, n-\lambda\sigma\}.$$ If $\df_x(X, G)\leq \beta_{G'}$, then the conclusion follows from the inequality \eqref{estofq} and Lemma \ref{main-lemma}.

We  assume that $\df_x(X, G)>\beta_{G'}$.  Write
\[r(\lambda, \df_x(X, G)):=1-\lambda -\df_x(X, G)\frac{\sqrt[d]{m}}{\sigma}.\]
If $\beta_G\leq n-\lambda\sigma$, equivalently $\lambda\leq \frac{n-\beta_G}{\sigma}$, then
\begin{eqnarray*}r(\lambda, \df_x(X, G))&\geq& 1-\lambda - \beta_G\frac{\sqrt[d]{m}}{\sigma}
\geq 1-\frac{n-\beta_G}{\sigma}-\beta_G\frac{\sqrt[d]{m}}{\sigma}\\
&=&\frac{\sigma-n+\beta_G}{\sigma}-\frac{\sqrt[d]{m}\beta_G}{\sigma}.
\end{eqnarray*}
If $\beta_G\geq n-\lambda\sigma$, equivalently $\lambda\geq \frac{n-\beta_G}{\sigma}$, then
\begin{eqnarray*}r(\lambda, \df_x(X, G))&\geq& 1-\lambda - (n-\lambda\sigma)\frac{\sqrt[d]{m}}{\sigma}=(1-n\frac{\sqrt[d]{m}}{\sigma})+(\sqrt[d]{m}-1)\lambda\\
&\geq&  (1-n\frac{\sqrt[d]{m}}{\sigma})+(\sqrt[d]{m}-1)\frac{n-\beta_G}{\sigma} \\ &=&\frac{\sigma-n+\beta_G}{\sigma}-\frac{\sqrt[d]{m}\beta_G}{\sigma}
\end{eqnarray*}  By the inequality \eqref{estofq}, we know that
\[\dfrac{\df_x(X, G')}{1-\lambda'}\leq  \dfrac{\beta_{G'}}{\frac{\sigma-n+\beta_G}{\sigma}-\frac{\sqrt[d]{m}(\beta_G-\beta_{G'})}{\sigma}}.\]

Assume in addition that $Z$ is smooth at $x$. The inequality \eqref{all-steps-smooth} follows from  the inequality \eqref{intermediate-steps-smooth} by plugging $m=1$ into it.  
\end{proof}

\section{Proof of the main theorem \label{sec:main-proof}}
Let $\sigma=5.9999$.  By Proposition \ref{Best-mu} and \ref{optimizingboundfordivisor}, there exists an effective $\mathbb{Q}$--divisor $G$ linearly equivalent to $\lambda L$ with $0<\lambda<1$ such that $(X, G)$ is log canonical at $x$ and $\ord_xG=\lambda q>\lambda\sigma$. Denote by $Z=M(G)$ the minimal log canonical center of $(X, G)$ at $x$.

We will prove the main theorem by descending induction on the dimension of log canonical centers. By Proposition \ref{Prop:Helmke'sInductionCriterion}, if there is a lc pair $(X, G)$ with the minimal log canonical center $Z$ satisfying the following property
\begin{equation}\label{eq:punch-line}
\dfrac{\df_x(X, G)}{1-\lambda}<\min\limits_{Z'\subseteq Z}\{\frac{6}{\sqrt[\dim Z']{\mult_xZ'}}\}
\end{equation} where $Z'$ is the minimal log canonical center of a lc pair $(X, P)$ at $x$ and $P$ is an effective $\mathbb{Q}$--divisor on $X$, then we can repeatedly apply Proposition \ref{Prop:Helmke'sInductionCriterion} and then Lemma \ref{cor:0-dimb-p-f} to complete the proof. To simplify the proof, we aim at showing that \eqref{eq:punch-line} holds. 

The proof of our main theorem will run though all possible dimensions and singularities of the minimal log canonical center $M(G)$. We divide the proof into three parts according to the complexity of arguments. In the first part, we consider easy cases: $\dim Z\leq 2$, or $\mult_xZ=1$, or $\dim Z=4$ and $\mult_xZ\geq 3$. In the second part, we consider the case that $\dim Z=4$ and $\mult_xZ=2$.  The most complicated case that $Z$ is singular of dimension $3$ will be proved in the last part. 

\begin{proof}[Proof of Main Theorem]\mbox{}

\subsection{Easy cases.}\mbox{}

\case{Assume that  $Z$ is a curve.}

Since $\mult_xZ=1$, then $\df_x(X, G)\leq\beta_G:=\dim Z=1$. By Lemma \ref{main-lemma}, we know that \[\dfrac{\df_x(X, G)}{1-\lambda}\leq \dfrac{\sigma}{\sigma-5+1}<6.\] 

\case{Assume that  $Z$ is a surface.} 

In this case, we know that $\mult_xZ\leq 4$ and
\[\df_x(X, G)\leq \begin{cases}2&\mult_xZ=1\\
1&\mult_xZ\geq 2.\end{cases}\]

Assume that $\mult_xZ\leq 3$. Apply Lemma \ref{main-lemma}, we get
 \[\dfrac{\df_x(X, G)}{1-\lambda}\leq \begin{cases}\dfrac{2\sigma}{\sigma-5+2}&\mult_xZ=1\\
 \dfrac{\sigma}{\sigma-5+1}&\mult_xZ=2, 3
 \end{cases}<\min\{\frac{6}{\sqrt{\mult_xZ}}, 6\}.\]

Assume that $\mult_xZ=4$. Then $\mld_xZ\leq \frac{1}{2}$ by Lemma \ref{mld-rational-surface}. By Proposition \ref{prop:Ein-Helmke}, we see that 
\[\df_x(X, G)\leq \df_x(Z,\diff_Z(G))\leq \mld_x(Z)\leq \frac{1}{2}=:\beta_G.\] Therefore,
\[\dfrac{\df_x(X, G)}{1-\lambda}\leq \dfrac{\frac{1}{2}\sigma}{\sigma-5+\frac{1}{2}}<2.1<\min\{\frac{6}{\sqrt{4}}, 6\}.\]

\case{Assume that  $Z$ is smooth and $\dim Z=3$.}

In this case, we know that $\beta_G\leq 3$. By Lemma \ref{main-lemma}  and Proposition \ref{Prop:Helmke'sInductionCriterion}, we can find a new divisor $G_1$ linearly equivalent to $\lambda_1L$ with  $\ord_xG_1\geq \lambda_1\sigma$ such that $(X, G_1)$ is log canonical at $x$ and the minimal log canonical center $Z_1=Z(G_1)$ is properly contained in $Z$. Moreover, we have
\[\frac{\df_x(X,G_1)}{1-\lambda_1}\leq\frac{\sigma \beta_{G_1}}{\sigma-n+\beta
_{G_1}}\]
by Lemma \ref{secondreductioncriterion}.

If $Z_1$ is a curve, we take $\beta_{G_1}=1$, then
\[\frac{\df_x(X,G_1)}{1-\lambda_1}\leq\frac{\sigma}{\sigma-5+1}<6.\]

If $Z_1$ is a surface, then the embedding dimension of $x\in Z_1$ is at most $3$ and $\mult_xZ_1\leq 2$. Take $\beta_{G_1}=\frac{2}{\mult_xZ_1}$ for $\mult_xZ_1=1, 2$, we see that 
 \[\frac{\df_x(X,G_1)}{1-\lambda_1}\leq \frac{\sigma \beta_{G_1}}{\sigma-5+\beta_{G_1}}<\min\{\dfrac{6}{\sqrt{\mult_xZ_1}},6\}.\]

\case{Assume that $Z$ is a smooth divisor.}

Similar to the case that $Z$ is a smooth threefold, 
By Lemma \ref{secondreductioncriterion}, we can find a new divisor $G_1$ linearly equivalent to $\lambda_1L$ with $\ord_xG_1\geq \lambda_1\sigma$ such that $(X, G_1)$ is log canonical at $x$ and the minimal log canonical center $Z_1=Z(G_1)\subsetneq Z$. Moreover, 
\[\frac{\df_x(X,G_1)}{1-\lambda_1}< \frac{\sigma\beta_{G_1}}{\sigma-5+\beta_{G_1}}.\]
The embedding dimension of $x\in Z_1$ is at most $4$, hence $m_1:=\mult_xZ_1\leq 3$. Let
\[
\beta_{G_1}=
\begin{cases}
3&\dim Z_1=3, m_1=1\\
2& \dim Z_1=3, m_1=2 \text{~or~} \dim Z_1=2, \mult_xZ_1=1\\
1& \dim Z_1=3, m_1=3 \text{~or~} \dim Z_1=2, \mult_xZ_1=2, 3 \text{~or~} \dim Z_1=1.
\end{cases}
\]
We have
\begin{equation}
\begin{split}\frac{\df_x(X,G_1)}{1-\lambda_1}&< \frac{\sigma\beta_{G_1}}{\sigma-5+\beta_{G_1}}\\
&\leq {\begin{cases}4.6& \dim Z_1=3, \mult_xZ_1=1\\
                   4.1& \dim Z_1=3, m_1=2 \text{~or~} \dim Z_1=2, \mult_xZ_1=1\\
                   3.1& \begin{split}
                              &\dim Z_1=3, m_1=3 \text{~or~} \dim Z_1=2, \mult_xZ_1=2, 3\\ 
                              &\text{~or~} \dim Z_1=1\end{split}
                           \end{cases} }\\
&<\min\{\dfrac{6}{\sqrt[\dim Z_1]{\mult_xZ_1}}, 6\}.\label{eq:smooth-div-step-1-ineq}
\end{split}\end{equation}
Then by Proposition \ref{Prop:Helmke'sInductionCriterion}, we can find an new effective divisor $G_2$ linearly equivalent to $\lambda_2L$ with  $0<\lambda_2<1$ such that $(X, G_2)$ is log canonical at $x$ and the minimal log canonical center $Z_2$ is properly contained in $Z_1$. By inequality \eqref{eq:smooth-div-step-1-ineq}, we see that only the case that $\dim Z_1=3$ and $\dim Z_2=2$ requires further arguments. 

 Assume that $\dim Z_1=3$, $\mult_xZ_1=1$ and $\dim Z_2=2$. Then $\mult_xZ_2\leq 2$.  If  $Z_2$ is also smooth, then we are done by inequality \eqref{eq:smooth-div-step-1-ineq}.  If $\mult_xZ_2=2$, then $\beta_{G_2}\leq 1$.  Apply Lemma \ref{secondreductioncriterion} to $(X, G_1)$, we have
 \[\frac{\df_x(X,G_2)}{1-\lambda_2}<\frac{\sigma\beta_{G_2}}{\sigma-5+\beta_{G_2}}<3.1<\dfrac{6}{\sqrt{2}}.\]
 
Assume that $\dim Z_1=3$, $\mult_x{Z_1}=2$, $\dim Z_2=2$ and $\mult_xZ_2=3$, then $\beta_{G_1}=2$ and $\beta_{G_2}\leq \frac{2}{3}$. Again we apply Lemma \ref{secondreductioncriterion} to $(X, G_1)$ and obtain that
\[\frac{\df_x(X, G_2)}{1-\lambda_2}\leq \dfrac{\frac{2}{3}}{\frac{\sigma-5+2}{\sigma}-\frac{\sqrt[3]{2}(2-\frac{2}{3})}{6}}<3.1<\dfrac{6}{\sqrt{3}}.\] 

Assume in the last that $\dim Z_1=3$ and $\mult_xZ_1=3$. We have \[\frac{\df_xG_2}{1-\lambda_2}<\frac{\df_xG_1}{1-\lambda_1}<3.1<\frac{6}{\sqrt{3}}.\]

\case{Assume that $Z$ is a divisor and $\mult_xZ\geq 3$.\label{sec:divisor-center-m>2}}

 In Proposition \ref{Best-mu}, taking $w=2, 1$ for $\mult_xZ=3, 4$ respectively and solving for $\mu(w)$, we get $\mu(2)<0.0044$ and $\mu(1)<0.0002$.  Apply Proposition \ref{optimizingboundfordivisor}, we have
\[\frac{\df_x(X, G)}{1-\lambda}<\frac{(5-m)+\mu(w)\sigma}{1+\mu(w)}<2.1< \min\{\frac{6}{\sqrt{4}}, \frac{6}{\sqrt[3]{6}},\frac{6}{\sqrt[4]{4}}\}.\]

\subsection{Divisor center with multiplicity $2$.\label{codimm=2}}\mbox{}

Assume that $\dim Z=4$ and $\mult_xZ=2$. Take $w=3$, we get $\mu(3)<0.0391$. By Proposition \ref{optimizingboundfordivisor}, we have 
\[\frac{\df_x(X, G)}{1-\lambda}<\frac{3+\mu(3)\sigma}{1+\mu(3)}<3.12<\dfrac{6}{\sqrt[4]{2}},\] 
 By Proposition  \ref{Prop:Helmke'sInductionCriterion}, we can find a new divisor $G_1$ such that $(X, G_1)$ is log canonical at $x$ and the minimal log canonical center $Z_1=M(G_1)$ is properly contained in $Z$.  
If $Z_1$ is smooth, then 
 \[\frac{\df_x(X, G_1)}{1-\lambda_1}<\frac{\df_x(X, G)}{1-\lambda}<3.12< \min\{6, \dfrac{6}{\sqrt{2}}\}.\]  
 Therefore, our induction can be carried on till we get a $0$-dimensional minimal log canonical center. 
 Now we assume that $Z_1$ is singular. It is possible that $Z_1$ is a surface with $\mult_xZ_1=4$ or that $Z_1$ is of dimension $3$ and contains a surface $Z_2$ with $\mult_xZ_2=4$ as a new minimal log canonical center.  We now show that \[\frac{\df_x(X, G_1)}{1-\lambda_1}<3=\min\{\dfrac{6}{\sqrt{4}}, \dfrac{6}{\sqrt[3]{6}}\}.\] Hence the induction can be carried on.

\begin{lemma}\label{5-folds-secondreduction:first=divisor}
If the minimal log canonical center $Z=M(G)$ is a divisor with $\mult_x(Z)=2$, then there exists a new divisor $G_1=\lambda_1L$ with $\lambda<\lambda_1<1$ such that $(X, G_1)$ is log canonical at $x$, the minimal log canonical center $Z_1=M(G_1)$ is properly contained in $Z$ and
\[\frac{\df_x(X, G_1)}{1-\lambda_1}<3\]
if $Z_1$ is singular.
\end{lemma}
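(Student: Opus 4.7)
The starting data established just above is $\beta_G=\alpha_{4,5}(2)=3\geq \df_x(X,G)$ (from Theorem~\ref{Thm:boundofmult}, since $Z$ has dimension $4$, multiplicity $2$ and embedding dimension $5$ at $x$) together with the volume estimate $\tfrac{\df_x(X,G)}{1-\lambda}<3.12<\tfrac{\sigma}{\sqrt[4]{2}}$. My plan is to apply Lemma~\ref{secondreductioncriterion} with $d=4$, $m=\mult_xZ=2$ and $\beta_G=3$ to obtain a cutting divisor $G_1=G+cD\sim \lambda_1 L$ whose minimal log canonical center $Z_1$ is properly contained in $Z$. After a small tie-breaking perturbation (Remark~\ref{rmk:critical}) to ensure $G_1$ is critical at $x$, part (c) of that lemma supplies the refined bound
\[
\dfrac{\df_x(X,G_1)}{1-\lambda_1}\;\leq\;\dfrac{\sigma\,\beta_{G_1}}{\sigma-5+\beta_G-\sqrt[4]{2}\,(\beta_G-\beta_{G_1})}\;=\;\dfrac{5.9999\,\beta_{G_1}}{0.4323+1.1892\,\beta_{G_1}},
\]
reducing the target inequality to a statement about $\beta_{G_1}$.

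Next I would case-split on the geometry of a singular $Z_1$. By Theorem~\ref{thm:Kawamata1998}, $Z_1$ is normal with at worst rational singularities, so either $\dim Z_1=2$ with $2\leq \mult_xZ_1\leq 4$, or $\dim Z_1=3$ with $\mult_xZ_1\geq 2$. In the surface subcases, Lemma~\ref{mld-rational-surface} together with Proposition~\ref{prop:Ein-Helmke} (the deficit-versus-mld bound and the adjunction bound to the minimal center) yields $\beta_{G_1}\leq 2/\mult_xZ_1$, and for $\mult_xZ_1=3$ with a $(-2)$-curve in the minimal resolution the sharper estimate $\beta_{G_1}\leq 3/5$. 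In the three-fold subcases, Example~\ref{lemma:dim=3} gives $\beta_{G_1}\leq 2$ when $\mult_xZ_1\in\{2,3\}$ and $\beta_{G_1}\leq 1$ otherwise. Substituting into the displayed formula disposes immediately of the extremal surface case $\mult_xZ_1=4$, where the ratio evaluates to $\approx 2.92<3$.

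The main obstacle is that the displayed formula does not by itself fall below $3$ in the remaining singular subcases, most seriously the singular three-fold $Z_1$ with $\mult_xZ_1=2$ (where $\beta_{G_1}=2$ gives the naive ratio $\approx 4.27$). My plan for these is to strengthen the choice of the cutting divisor $D$ by invoking the Weil-divisor volume machinery introduced in Section~\ref{sec:increasing-order} and developed in Appendix~\ref{sec:vol-Weil-div}. Working on the klt pair $(Z,\diff_Z(G))$, I would select a prime Weil divisor $S\subset Z$ adapted to the anticipated $Z_1$ (for instance $S=Z_1$ when $Z_1$ is a prime divisor in $Z$, or a prime component of $\supp\diff_Z(G)$ otherwise) and use the volume estimates for the graded system $\lsyst{\mathcal{O}_Z(kL|_Z)\otimes \mathfrak{m}_x^{kt}}$ together with the fixed-part function $\psi_S(t)$ to force $D|_Z$ to vanish to additional order along $S$. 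The relative-deficit inequality of Lemma~\ref{lemma:Ein-Siu} then converts this extra vanishing into a strict improvement of the numerator–denominator balance in Lemma~\ref{secondreductioncriterion}. The principal technical step will be to verify that the lower bound for $\psi_S(t)$ provided by Appendix~\ref{sec:vol-Weil-div} is sharp enough to push the resulting ratio uniformly below $3$ in every remaining singular subcase, and in particular in the problematic three-fold case where the naive loss from $\sqrt[4]{2}$ is largest.
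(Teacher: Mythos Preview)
Your proposal has a genuine gap. Applying Lemma~\ref{secondreductioncriterion}(c) with $\beta_G=3$ disposes only of the extremal surface subcase $\mult_xZ_1=4$; as you yourself compute, every other singular subcase fails (e.g.\ $\beta_{G_1}=2$ gives $\approx 4.27$, $\beta_{G_1}=1$ gives $\approx 3.70$, $\beta_{G_1}=2/3$ gives $\approx 3.27$). Your proposed rescue via Appendix~\ref{sec:vol-Weil-div} does not apply here: that machinery is built specifically for a \emph{three}-dimensional center (see the explicit hypotheses of Lemma~\ref{integral-tangent-cone} and Lemma~\ref{lengthdiv}), whereas in this lemma $Z$ is a $4$-fold. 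In addition, the choice $S=Z_1$ is circular, since $Z_1=M(G+cD)$ depends on the very cutting divisor $D$ you are trying to construct, and there is no reason $Z_1$ should appear as a component of $\supp\diff_Z(G)$.

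The paper's argument is organised in the opposite direction: instead of sharpening $\beta_{G_1}$, it sharpens $\beta_G$, exploiting the fact that $Z$ is a \emph{divisor} in the smooth variety $X$. The Cartier-divisor volume calculus of Section~\ref{sec:increasing-order} (Proposition~\ref{optimizingboundfordivisor} together with Corollary~\ref{lowerboundphi}) yields
\[
\df_x(X,G)<\min\bigl\{\,5-\lambda q,\ 3-\lambda(3-0.0391(q-\sigma)),\ 3-\lambda(3.9999-0.2884(q-\sigma))\,\bigr\},
\]
where $q$ is the vanishing-order parameter fixed at the beginning of Section~\ref{sec:main-proof}. One then takes the uniform value $\beta_{G_1}=2$ (valid for any singular $Z_1$ of dimension $2$ or $3$), plugs into the raw inequality~\eqref{estofq} of Lemma~\ref{secondreductioncriterion}, and runs a case analysis on the four regions determined by $q>10$ versus $q\leq 10$ and $\df_x(X,G)>2$ versus $\df_x(X,G)\leq 2$. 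Each region delivers $\tfrac{\df_x(X,G_1)}{1-\lambda_1}<3$. The Weil-divisor volume appendix is reserved for the genuinely harder situation in \ref{subsec:most-difficult}, where the center is already $3$-dimensional.
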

\begin{proof}
The existence of $Z_1$ is clear.  We focus on proving the inequality. Since $Z_1$ is singular at $x$ and of dimension $3$ or $2$, we can take $\beta_{G_1}=2$.  By the proof of \cite[Proposition 3.3]{Ye2014}, Corollary \ref{lowerboundphi} and Proposition \ref{Ein-Helmke-def-up-first}, we see that  \[\df_x(X, G)<\beta_{G}:=\min\{5-\lambda q, 3-\lambda(3-0.0391(q-\sigma)),3-\lambda(3.9999-0.2884(q-\sigma))\}.\]

We imitate the proof of Lemma \ref{secondreductioncriterion} with improvement on $\df_x(X, G)$.  Recall the following notation from the proof of Lemma \ref{secondreductioncriterion}: \[r(\lambda, \df_x(X, G))=1-\lambda-\df_x(X, G)\frac{\sqrt[d]{m}}{\sigma}.\] 

\case{Assume $q>10$ and $\df_x(X, G)>2$.} 

Since $q>10$, then by the proof of Corollary \ref{lowerboundphi} we see that \[\beta_{G}=\min\{5-\lambda q, 3-\lambda(3-0.0391(q-\sigma))\}.\]
\begin{enumerate}
\item
If $3-\lambda(3-0.0391(q-\sigma))\geq5-\lambda q>2$, then $\frac{3}{q}>\lambda\geq \frac{2}{1.0391q-3.23459609}$ and $\beta_G=5-\lambda q$. It follows that
\[\begin{split}r(\lambda, \df_x(X, G))&\geq 1-\lambda-(5-\lambda q)\frac{\sqrt[4]{2}}{\sigma}\\
&\geq \frac{\sigma-5\sqrt[4]{2}}{\sigma}+\frac{q\sqrt[4]{2}-\sigma}{\sigma}\lambda\\
&\geq \frac{\sigma-5\sqrt[4]{2}}{\sigma}+\frac{q\sqrt[4]{2}-\sigma}{\sigma}\frac{2}{1.0391q-3.23459609}\\
&> \frac{\sigma-5\sqrt[4]{2}}{\sigma}+\frac{10\sqrt[4]{2}-\sigma}{\sigma}\frac{2}{1.0391\cdot 10-3.23459609}\\
&>0.2834.
\end{split}\]
By inequality \eqref{estofq} in Lemma \ref{secondreductioncriterion}, we get $\frac{\df_x(X, G_1)}{1-\lambda_1}<\frac{2}{0.2834+\frac{2\sqrt[4]{2}}{\sigma}}<2.86$.

\item
If  $5-\lambda q\geq  3-\lambda(3- 0.0391(q-\sigma))>2$, then
\[\lambda\leq \frac{2}{1.0391q-3.23459609}\]
and $\beta_G=3-\lambda(3- 0.0391(q-\sigma))$. 
It follows that
\[\begin{split}r(\lambda, \df_x(X, G))
&\geq 1-\lambda-( 3-\lambda(3- 0.0391(q-\sigma)))\frac{\sqrt[4]{2}}{\sigma}\\
&\geq 1-\frac{3\sqrt[4]{2}}{\sigma}+\lambda((3- 0.0391(q-\sigma))\frac{\sqrt[4]{2}}{\sigma}-1)\\
&\geq 1-\frac{3\sqrt[4]{2}}{\sigma}+\frac{2((3- 0.0391(q-\sigma))\frac{\sqrt[4]{2}}{\sigma}-1)}{1.0391q-3.2339961}\\
&=(\frac{\sigma-5\sqrt[4]{2}}{\sigma})+\frac{q\sqrt[4]{2}-\sigma}{\sigma}\cdot\frac{2}{1.0391q-3.23459609}\\
&> (\frac{\sigma-5\sqrt[4]{2}}{\sigma})+\frac{10\sqrt[4]{2}-\sigma}{\sigma}\cdot\frac{2}{1.0391\cdot 10-3.23459609}\\
&>0.2834.
\end{split}\]
Therefore, $\frac{\df_x(X, G_1)}{1-\lambda_1}<\frac{2}{0.2834+\frac{2\sqrt[4]{2}}{\sigma}}<2.86$.
\end{enumerate}

\case{Assume that $q>10$ and $\df_x(X, G)\leq 2$.}

 We know that  \[\df_x(X, G)\leq \min\{2, 5-\lambda q\} .\]
\begin{enumerate}
\item 
If $5-\lambda q\geq 2$, then $\lambda\leq \frac{3}{q}\leq \frac{3}{10}$.
It follows that
\[\dfrac{\df_x(X, G_1)}{1-\lambda_1}<\dfrac{\df_x(X, G)}{1-\lambda}\leq \frac{2}{1-\lambda}\leq \frac{20}{7}.\]

\item
 If $5-\lambda q\leq  2$, then $\lambda\geq \frac{3}{q}$.
It follows that
\[\dfrac{\df_x(X, G_1)}{1-\lambda_1}<\dfrac{\df_x(X, G)}{1-\lambda}\leq \frac{5-\lambda q}{1-\lambda}\leq \frac{2}{1-\frac{3}{q}}\leq \frac{20}{7}.\]
\end{enumerate}

\case{Assume that $q\leq 10$ and $\df_x(X, G)>2$.}

By the proof of Corollary \ref{lowerboundphi}, we know that
\[\beta_{G}=\min\{5-\lambda q,,3-\lambda(3.9999-0.2884(q-\sigma))\}.\]
\begin{enumerate}
\item
If $ 3-\lambda(3.9999-0.2884(q-\sigma))\geq 5-\lambda q\geq 2$, then $\frac{3}{q}> \lambda\geq \frac{2}{1.2884q-5.73027116}$ which implies that $q> 9.2$.
Then
\[\begin{split}r(\lambda, \df_x(X, G))&\geq 1-\lambda-(5-\lambda q)\frac{\sqrt[4]{2}}{\sigma}\\
&\geq (\frac{\sigma-5\sqrt[4]{2}}{\sigma})+\frac{q\sqrt[4]{2}-\sigma}{\sigma}\lambda\\
&\geq (\frac{\sigma-5\sqrt[4]{2}}{\sigma})+\frac{q\sqrt[4]{2}-\sigma}{\sigma}\frac{2}{1.2884q-5.73027116}\\
&> (\frac{\sigma-5\sqrt[4]{2}}{\sigma})+\frac{9.2\sqrt[4]{2}-\sigma}{\sigma}\frac{2}{1.2884\cdot 9.2-5.73027116}\\
&>0.2779.
\end{split}\]
It follows that $\frac{\df_x(X, G_1)}{1-\lambda_1}<\frac{2}{0.2779+\frac{2\sqrt[4]{2}}{\sigma}}<2.97$.

\item 
If  $5-\lambda q\geq  3-\lambda(3.9999- 0.2884(q-\sigma))>2$, then 
\[\lambda\leq \min\{\frac{2}{1.2884q-5.73027116}, \frac{1}{5.73027116-0.2884q}\}.\] It follows that  \[\begin{split}r(\lambda, \df_x(X, G))
&\geq 1-\lambda-(3-\lambda(3.9999- 0.2884(q-\sigma)))\frac{\sqrt[4]{2}}{\sigma}\\
&= 1-\frac{3\sqrt[4]{2}}{\sigma}+\lambda((3.9999- 0.2884(q-\sigma))\frac{\sqrt[4]{2}}{\sigma}-1).\end{split}\]

\begin{enumerate}
\item[(1)] Assume that $\frac{2}{1.2884q-5.73027116}<\frac{1}{5.73027116-0.2884q}$. Then $q>9.2$ and
\[\begin{split}r(\lambda, \df_x(X, G))
&\geq 1-\frac{3\sqrt[4]{2}}{\sigma}+\frac{2((3.9999- 0.2884(q-\sigma))\frac{\sqrt[4]{2}}{\sigma}-1)}{1.2884q-5.73027116}\\
&=(\frac{\sigma-5\sqrt[4]{2}}{\sigma})+\frac{2}{\sigma}\cdot\frac{q\sqrt[4]{2}-\sigma}{1.2884q-5.73027116}\\
&> (\frac{\sigma-5\sqrt[4]{2}}{\sigma})+\frac{2}{\sigma}\cdot\frac{9.2\sqrt[4]{2}-\sigma}{1.2884\cdot 9.2-5.73027116}\\
&>0.2779.
\end{split}\] Consequently,  \[\frac{\df_x(X, G_1)}{1-\lambda_1}<\frac{2}{0.2779+\frac{2\sqrt[4]{2}}{\sigma}}<2.97.\]
\item[(2)] Assume that $\frac{2}{1.2884q-5.73027116}\geq \frac{1}{5.73027116-0.2884q}$. Then $q<9.22$ and
\[\begin{split}r(\lambda, \df_x(X, G))
&\geq 1-\frac{3\sqrt[4]{2}}{\sigma}+\frac{(3.9999- 0.2884(q-\sigma))\frac{\sqrt[4]{2}}{\sigma}-1}{5.73027116-0.2884q}\\
&\geq 1-\frac{2\sqrt[4]{2}}{\sigma}+ \frac{1}{0.2884q-5.73027116}\\
&>1-\frac{2\sqrt[4]{2}}{\sigma}+ \frac{1}{0.2884\cdot 9.22-5.73027116}.
\end{split}\]
Consequently, 
\[\frac{\df_x(X, G_1)}{1-\lambda_1}<\frac{2}{1+\frac{1}{0.2884\cdot 9.22-5.73027116}}<2.97.\]
\end{enumerate}
\end{enumerate}

\case{Assume that  $q<10$ and $\df_x(X, G)\leq 2$.}

By Lemma \ref{secondreductioncriterion}, we know that $\frac{\df_x(X, G_1)}{1-\lambda_1}<\frac{\df_x(X, G)}{1-\lambda}$.  

\begin{enumerate}
\item If $2\leq 5-\lambda q\leq  3-\lambda(3.9999- 0.2884(q-\sigma))$, then $\frac{3}{q}\geq \lambda\geq \frac{2}{1.2884q-5.73027116}$ which implies that $q> 9.2$. Thus,
\[\frac{\df_x(X, G)}{1-\lambda}\leq \frac{2}{1-\lambda}\leq \frac{2q}{q-3}<\frac{2\cdot 9.2}{9.2-3}<2.97.\]

\item If $2\leq 3-\lambda(3.9999- 0.288(q-\sigma))\leq 5-\lambda q$, then
$$\lambda\leq \min\{\frac{2}{1.2884q-5.73027116}, \frac{1}{5.73027116-0.2884q}\}.$$
\begin{enumerate}
\item[(1)] Assume that $\frac{2}{1.2884q-5.73027116}<\frac{1}{5.73027116-0.2884q}$. Then $q>9.2$ and \[\frac{\df_x(X, G)}{1-\lambda}\leq \frac{2}{1-\lambda}\leq \frac{2}{1-\frac{2}{1.2884q-5.73027116}}<\frac{2}{1-\frac{2}{1.2884\cdot 9.2-5.73027116}}<2.98.\]
\item[(2)]
Assume that $\frac{2}{1.2884q-5.73027116}\geq \frac{1}{5.73027116-0.2884q}$. Then $q<9.22$ and
\[\frac{\df_x(X, G)}{1-\lambda}\leq \frac{2}{1-\lambda}\leq \frac{2}{1-\frac{1}{5.73027116-0.2884q}}<\frac{2}{1-\frac{1}{5.73027116-0.2884\cdot 9.22}}<2.97.\]
\end{enumerate}

\item 
If $5-\lambda q\leq 2\leq  3-\lambda(3.9999- 0.2884(q-\sigma))$,  then $\frac{3}{q}\leq \lambda\leq \frac{1}{5.73027116-0.2884q}$ which implies that $q>9.2$. Therefore,
\[\frac{\df_x(X, G)}{1-\lambda}\leq \frac{5-\lambda q}{1-\lambda}\leq \frac{2q}{q-3}<\frac{2\cdot 9.2}{9.2-3}<2.97.\]

\item 
If $5-\lambda q\leq  3-\lambda(3.9999- 0.2884(q-\sigma))\leq 2$, then  $$\lambda\geq \max\{\frac{2}{1.2884q-5.73027116}, \frac{1}{5.73027116-0.2884q}\}.$$ If $\frac{2}{1.2884q-5.73027116}\leq \frac{1}{5.73027116-0.2884q}$, then $q>9.2$. Therefore,
\[\begin{split}\frac{\df_x(X, G)}{1-\lambda}&\leq \frac{5-\lambda q}{1-\lambda}\\ &\leq \frac{5-\frac{q}{5.73027116-0.2884q}}{1-\frac{1}{5.73027116-0.2884q}}\\ &= \frac{5\cdot 5.73027116-(1+5\cdot0.2884)\cdot 9.2}{4.73027116-0.2884\cdot9.2} <2.98.
\end{split}\]

\item 
If $ 3-\lambda(3.9999- 0.2884(q-\sigma))\leq 2\leq 5-\lambda q$,  then $\frac{1}{5.73027116-0.2884q}\leq  \lambda\leq \frac{3}{q}$ which implies that $q<9.22$.  Therefore,
\[\begin{split}\frac{\df_x(X, G)}{1-\lambda}&\leq \frac{3-\lambda(5.7278712-0.288q)}{1-\lambda}\\
&\leq \frac{3-\frac{1}{5.73027116-0.2884q}(5.73027116-0.2884q)}{1-\frac{1}{5.73027116-0.2884q}} \\
&\leq \frac{2}{1-\frac{1}{5.73027116-0.2884\cdot9.2}}<2.97
\end{split}\]

\item 
If $3-\lambda(3.9999- 0.2884(q-\sigma))\leq 5-\lambda q \leq 2$, then $ \frac{3}{q} \leq \lambda \leq \frac{2}{1.2884q-5.73027116}$ which implies that $q<9.22$.  Therefore,
\[\begin{split}\frac{\df_x(X, G)}{1-\lambda}&\leq \frac{3-\lambda(3.9999- 0.2884(q-\sigma))}{1-\lambda}\\
&\leq \frac{3-\frac{3}{q}(5.73027116-0.2884q)}{1-\frac{3}{q}} \\
&< \frac{3\cdot 9.22-3(5.73027116-0.2884\cdot 9.22)}{9.22- 3}<2.97.
\end{split}\]
\end{enumerate}

From the above case-by-case argument, we know that $\frac{\df_x(X, G_1)}{1-\lambda_1}<3$
if $Z_1$ is singular.
\end{proof}

\subsection{3-dimensional singular centers. \label{subsec:most-difficult}}\mbox{}

Assume that the minimal log canonical center $Z$ of $(X, G)$ at $x$ has dimension $3$ and $\mult_xZ\geq 2$.
\case{Assume that $\mult_xZ\geq 4$.\label{case:3-fold-m>=4}} 

 In this cace, $\df_x(X,G)\leq \alpha_{3, 5}(m)\leq 1=:\beta_G$.  By Lemma \ref{main-lemma}, we know that
\[\frac{\df_x(X,G)}{1-\lambda}< \frac{\sigma}{\sigma-5+1}< 3.1<\frac{6}{\sqrt[3]{6}}.\]
By Proposition \ref{Prop:Helmke'sInductionCriterion}, we can find a new divisor $G_1$ linearly equivalent to $\lambda_1L$ such that $(X, G_1)$ is log canonical at $x$ and the minimal log canonical center $Z_1=M(G_1)$ is properly contained in $Z$. Moreover,
\[\frac{\df_x(X,G_1)}{1-\lambda_1}< \frac{\df_xG}{1-\lambda}<3.1.\]
If $Z_1$ is a curve, then the induction can be easily proceeded. 

If  $Z_1$ a surface and $\mult_xZ_1\leq 3$, then \[\frac{\df_x(X,G_1)}{1-\lambda_1}<\dfrac{6}{\sqrt[\dim Z_1]{\mult_xZ_1}}.\]

If $Z_1$ is a surface and $\mult_xZ_1=4$, then  $\df_x(X, G_1)\leq \mld_x(Z_1)\leq \frac{1}{2}=:\beta_{G_1}$ by Lemma \ref{mld-rational-surface}. Apply Lemma \ref{secondreductioncriterion}, we see that
\[ \dfrac{\df_x(X, G_1)}{1-\lambda_1}\leq\dfrac{\frac{1}{2}}{\frac{\sigma-5+1}{\sigma}-\frac{\sqrt[3]{6}(1-\frac{1}{2})}{\sigma}}<2.75<\dfrac{6}{\sqrt[\dim Z_1]{\mult_xZ_1}}=3.\]

\case{Assume that  $\mult_xZ=2$.}

In this case, $\df_x(X, G)\leq \beta_G=2$. We have 
\[\frac{\df_x(X, G)}{1-\lambda}\leq \frac{2\sigma}{\sigma-5+2}\leq 4.1<\frac{6}{\sqrt[3]{2}}.\]
We can find a new divisor $G_1$  log canonical at $x$ such that the minimal log canonical center $Z_1=M(G_1)$ is properly contained in $Z$.  Then $Z_1$ is either a smooth curve or a surface with $m_1=\mult_xZ_1\leq 4$. Similar to the previous case, we only need to consider that $\dim Z_1=2$, $m_1\leq 4$. In this case, we know that $\mld_x(X, G_1)\leq \frac{2}{m_1}$. By Lemma \ref{secondreductioncriterion}, we see that for each $m_1\leq 4$,
\[ \dfrac{\df_x(X, G_1)}{1-\lambda_1}\leq\dfrac{\frac{2}{m_1}}{\frac{\sigma-5+2}{\sigma}-\frac{\sqrt[3]{2}(2-\frac{2}{m_1})}{\sigma}}<\dfrac{6}{\sqrt{m_1}}.\]

\case{\textbf{Assume that  $\mult_xZ=3$.}}

In this case, $\df_x(X, G)\leq \beta_G=2$. 
We have  \[\frac{\df_x(X, G)}{1-\lambda}\leq \frac{2\sigma}{\sigma-5+2}\leq 4.1<\frac{6}{\sqrt[3]{3}}.\]
 Then there is a new divisor $G_1$ log canonical at $x$ such that the minimal log canonical center $Z_1=M(G_1)$ is properly contained in $Z$.
 
  If $\df_x(X, G)\leq 1$, then we can apply the same argument as in the case \ref{case:3-fold-m>=4}. 
  
  We may and will assume that $\df_x(X, G)>1$. Let $H$ be a general hyperplane section through $x$. Then  $\df_x(H, G|_H)=\df_x(X, G)-1$ and the minimal log canonical center $Z\cap H$ of $G|_H$ is the same as $Z\cap H$ due to the fact that $\df_x(X, G)>1$.   Therefore, we get  $\mult_x(Z\cap H)=3$ which implies that  $\df_x(H, G|_H)\leq 2/3$ by Lemma \ref{mld-rational-surface}. Hence we may take $\beta_G=\frac{5}{3}$.  If $m_1=\mult_xZ_1\leq 3$, then
\[ \dfrac{\df_x(X, G_1)}{1-\lambda_1}\leq\dfrac{\frac{2}{m_1}}{\frac{\sigma-5+\frac{5}{3}}{\sigma}-\frac{\sqrt[3]{3}(\frac{5}{3}-\frac{2}{m_1})}{\sigma}}<\dfrac{6}{\sqrt[\dim Z_1]{\mult_xZ_1}}.\]
 
 Now we consider the case that $m=\mult_xZ=3$ and $Z_1$ is a surface with multiplicity $m_1=4$.
 We write $S=Z_1$ to remind us that $Z_1$ is a surface.  If a general hyperplane section of $Z$ at $x$ is not an ordinary triple point, i.e. the minimal resolution consists of only one exceptional curve whose self-intersection is  $-3$, then by Lemma \ref{mld-rational-surface} and Proposition \ref{hyperplane} we can take $\beta_G=\frac{8}{5}$ which implies that 
 \[ \dfrac{\df_x(X, G_1)}{1-\lambda_1}\leq\dfrac{\frac{2}{4}}{\frac{\sigma-5+\frac{8}{5}}{\sigma}-\frac{\sqrt[3]{3}(\frac{8}{5}-\frac{2}{4})}{\sigma}}<3.\]

 It only remains the case that a general hyperplane section of $Z$ has a ordinary triple point at $x$ and $\mult_xS=4$. 
  In this cases, the same computation won't give the desired inequality. In fact,
\[ \dfrac{\df_x(X, G_1)}{1-\lambda_1}\leq\dfrac{\frac{2}{4}}{\frac{\sigma-5+\frac{5}{3}}{\sigma}-\frac{\sqrt[3]{3}(\frac{5}{3}-\frac{2}{4})}{\sigma}}<3.05.\]
But we want $\frac{\df_x(X, G_1)}{1-\lambda_1}<3$.  We will show that there exists a certain effective divisor $G'$ such that $\frac{\df_x(X, G')}{1-\lambda'}<3$ and then our induction follows. 

Note that if the divisor $G_1$ already satisfies the inequality $\dfrac{\df_x(X, G_1)}{1-\lambda_1}<3$, we can simply take $G'$ to be $G$.

Assume in the contrary that \begin{equation}\label{bad}
\dfrac{\df_x(X, G_1)}{1-\lambda_1}\geq 3
\end{equation}
for the divisor $G_1=G+cD$ constructed in Lemma \ref{secondreductioncriterion}. 
Recall that $\ord_x(D|_Z)=\df_x(X,G)$ with $D$ linearly equivalent to  $\df_x(X,G)\frac{\sqrt[3]{3}}{\sigma}L$, and $c$ is the log canonical threshold of the triple $(X,G, D)$ at $x$. The inequality \eqref{bad} implies that $\df_x(X,G)$, $\lambda$, $c$ can only vary in a small region. 

\begin{lemma}\label{lemma:lambda-s-c}
Assume that 
\begin{equation*}
\dfrac{\df_x(X, G_1)}{1-\lambda_1}\geq 3
\end{equation*} 
for the general effective $\mathbb{Q}$-divisor $G_1=G+cD$ constructed in Lemma \ref{secondreductioncriterion}.
Then we have 
\begin{enumerate}[ref=\alph*]
\item \label{fail-3-conclusion-1} $1.63\leq \df_x(X,G)\leq \frac{5}{3}$.\\
\item \label{fail-3-conclusion-2} $\lambda \leq \frac{5-\df_x(X, G)}{\sigma}$.\\
\item \label{fail-3-conclusion-3} Let $s=\ord_S(\diff_Z(G))$, then $s\leq 5-3\df_x(X, G)\leq 0.11$.
\end{enumerate}
\end{lemma}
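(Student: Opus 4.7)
The plan is to handle (b) as an immediate consequence of the running setup, (a) as an algebraic unpacking of the hypothesis combined with (b), and (c) via adjunction on a general hyperplane section, which is the substantive step.

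\textbf{Parts (a) and (b).} Since $\ord_x G \geq \lambda \sigma$ by construction of $G$, Proposition \ref{Ein-Helmke-def-up-first} gives $\df_x(X, G) \leq 5 - \lambda\sigma$, which rearranges to $\lambda \leq (5 - \df_x(X, G))/\sigma$; this is (b). For (a), since $S = M(G_1)$ is a surface with $\mult_x S = 4$, Lemma \ref{mld-rational-surface} together with Propositions \ref{def-up-new} and \ref{def-pairs} lets us take $\beta_{G_1} = 1/2$. Since $\df_x(X, G) \leq 5/3 > \beta_{G_1}$, the estimate \eqref{estofq} of Lemma \ref{secondreductioncriterion} (with $d = 3$, $m = 3$) reads
\[
\frac{\df_x(X, G_1)}{1-\lambda_1} \leq \frac{1/2}{(1-\lambda) - (\df_x(X, G) - 1/2)\sqrt[3]{3}/\sigma}.
\]
Requiring the left side to be at least $3$ gives $1-\lambda \leq \tfrac{1}{6} + (\df_x(X, G) - \tfrac{1}{2})\sqrt[3]{3}/\sigma$. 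Combining with the lower bound $1-\lambda \geq (\sigma - 5 + \df_x(X, G))/\sigma$ from (b) and solving (using $\sigma = 5.9999$), elementary algebra forces $\df_x(X, G) \geq 1.63$.

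\textbf{Part (c).} Since $\df_x(X, G) > 1$, Proposition \ref{hyperplane} applied to a general hyperplane $H \ni x$ in $X$ shows that $(H, G|_H)$ is log canonical at $x$ with minimal log canonical center $Z \cap H$, a rational surface with an ordinary triple point, and $\df_x(H, G|_H) = \df_x(X, G) - 1$. Propositions \ref{def-up-new} and \ref{def-pairs} then give
\[
\df_x(X, G) - 1 \leq \mld_x(Z \cap H, \diff_{Z \cap H}(G|_H)).
\]
Let $\pi : \widetilde{Z \cap H} \to Z \cap H$ be the minimal resolution; its single exceptional divisor $E \cong \mathbb{P}^1$ satisfies $E^2 = -3$, and adjunction on $\widetilde{Z \cap H}$ yields $a(E; Z \cap H, 0) = 2/3$. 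Setting $T := S \cap H$, the compatibility of the different with restriction identifies the coefficient of $T$ in $\diff_{Z \cap H}(G|_H)$ as $s$; for general $H$ the strict transform $\tilde T$ meets $E$ transversely in a single point, so $\pi^* T = \tilde T + \tfrac{1}{3} E$ and $\ord_E \pi^* T = 1/3$. Therefore
\[
a(E; Z \cap H, \diff_{Z \cap H}(G|_H)) \leq \tfrac{2}{3} - \tfrac{s}{3},
\]
and chaining the inequalities gives $\df_x(X, G) - 1 \leq \tfrac{2}{3} - \tfrac{s}{3}$, i.e., $s \leq 5 - 3\df_x(X, G) \leq 5 - 3(1.63) = 0.11$.

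The main obstacle is Part (c), where two delicate points must be justified: (i) the identification of the coefficient of $T = S \cap H$ in $\diff_{Z \cap H}(G|_H)$ as $s$, which requires the compatibility of the different with the hyperplane restriction $Z \rightsquigarrow Z \cap H$; and (ii) the intersection-theoretic equality $\tilde T \cdot E = 1$ for general $H$, which relies on the ``ordinary'' structure of the triple point at $x$ (so that the projectivized tangent behavior of $T$ is generic with respect to $E$).
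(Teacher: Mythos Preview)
Your argument is correct and matches the paper's approach. For (a) and (b) the computations coincide (you unpack \eqref{estofq} directly while the paper invokes the derived form \eqref{intermediate-steps-smooth} with $\beta_G=1.63$, but these are equivalent).

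For (c), the paper resolves your two ``delicate points'' more cheaply than you anticipate, and you should adopt its route. Rather than identifying the coefficient of $S\cap H$ in $\diff_{Z\cap H}(G|_H)$ as \emph{exactly} $s$, the paper uses only the general inequality $\diff_{Z\cap H}(G|_H)\geq(\diff_Z(G))|_H$ for differents under hyperplane restriction, so the coefficient is $\geq s$, and this already suffices for the final bound. Rather than proving the intersection-theoretic claim $\tilde T\cdot E=1$, the paper appeals to Lipman's result that the local class group of the ordinary triple point has order dividing $3$, hence $3(S|_H)$ is Cartier at $x$ and $\ord_x(S|_H)\geq\tfrac13$; feeding this into $\df_x(Z\cap H,\,\cdot\,)\leq\mld_x(Z\cap H)-\ord_x(\,\cdot\,)$ gives the same $\tfrac{s}{3}$ contribution. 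Both of your stronger equality claims imply what is needed, but neither is required, and in particular $\tilde T\cdot E=1$ is not obvious given that $\mult_x(S\cap H)=4$. Replacing your two equalities by the corresponding inequalities closes the argument without further geometric input and brings it in line with the paper's proof.
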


\begin{proof}
If $ \df_x(X, G)<1.63$, we take $\beta_G=1.63$, then  $\frac{\df_x(X, G_1)}{1-\lambda_1}<3$  by Lemma \ref{secondreductioncriterion}, which contradicts with our assumption. So we assume that $1.63\leq \df_x(X, G)\leq \frac{5}{3}$. From the upper bound formula $\df_x(X, G)\leq 5-\lambda\sigma$, we know that $\lambda \leq \frac{5-\df_x(X, G)}{\sigma}$. This proves \eqref{fail-3-conclusion-1} and \eqref{fail-3-conclusion-2}. 

For \eqref{fail-3-conclusion-3}, take a general hyperplane section $H$ in $X$, by Proposition \ref{hyperplane}, we have $\df_x(X, G)=\df_x(H, G|_H)+1$ and $Z\cap H=M(G)\cap H=M(G|_H)$. Note that by definition of different, we know that $\diff_{Z\cap H}(G|_H)\geq (\diff_Z(G))|_H$. Assume that $\diff_Z(G)=sS+T$ where $S\not\subset\supp(T)$.  Then by the definition of deficit, and Proposition \ref{def-up-new} and \ref{def-pairs}, we have
\[\begin{split}\df_x(X, G)&=1+\df_x(H, G|_H) \leq 1+ \df_x(Z\cap H, \diff_{Z\cap H}(G|_H))\\
& \leq 1+\df_x(Z\cap H, (\diff_Z(G))|_H)\\
&\leq 1+ \mld_x(Z\cap H)-\ord_x((\diff_Z(G))|_H)\\&\leq 1+ \mld_x(Z\cap H)-s\cdot\ord_x(S|_H).\end{split}\]
Recall by Lemma \ref{mld-rational-surface}, we may assume that the minimal resolution of the rational surface singularity $(Z\cap H, x)$ consists of only one curve whose self-intersection $-3$. In this case, we know that $\mld_x(Z\cap H)=\frac{2}{3}$ and the index of any Weil divisor on $Z$ is at most $3$ (see \cite[Section 17 and 14]{Lipman1969}). Hence $\ord_x(S|_H)\geq \frac{1}{3}$ and $\df_x(X, G)\leq \frac{5}{3}-\frac{s}{3}$ which implies that $s\leq 5-3\df_x(X, G)$. Note that the inequality $\df_x(X, G)\leq 1.63$ implies that $s\leq 0.11$.
\end{proof}

 Let $\tau=\frac{6(1-\lambda)}{\sqrt[3]{3}}$ and \[L_{\lambda, k}(t)=\{D\mid D \text{~is~} \mb{Q}\text{--Cartier~ on} ~Z ~\text{and ~}kD\in \lsyst{k(1-\lambda)L|_Z\otimes \mf{m}_x^{\ceil{kt}}}\},\] where $k$ is a sufficiently large divisible integer and  $t\geq \df_x(X, G)$. By asymptotic Riemann-Roch theorem, we know that $L_{\lambda,k}(\tau)\neq \emptyset$.

\paragraph{\bf Stable-center property:} For any $t\in [\df_x(X,G), \tau]$ and sufficiently large divisible $k$, the minimal log canonical center of $(Z,\diff_Z(G)+c_tD_{k, t})$ at $x$ is the surface $S$, where $D_{k, t}\in L_{\lambda, k}(t)$ and $c_t$ is the log canonical threshold of the triple $(Z, \diff_Z(G), D_{k,t})$.

\begin{lemma} \label{twocenters}
 Assume that the stable-center property fails, 
then there is a number $\bar{t}\in (\df_x(X, G), \tau)$ and a divisor $\mathbb{R}$--Cariter divisor $D$ linearly equivalent to $(1-\lambda)L|_Z$ with $\ord_xD\geq t$ such that the minimal log canonical center of $(X,G+c_D\tilde{D})$ is a curve or a point, where $\tilde{D}$ is a lifting of $D$.
\end{lemma}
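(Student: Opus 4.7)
The plan is a continuity/limit argument in the parameter $t$, coupled with a section-lifting step from $Z$ to $X$. The hypothesis means that for some $t \in [\df_x(X,G), \tau]$, some large divisible $k$, and some $D_{k,t} \in L_{\lambda, k}(t)$, the minimal log canonical center of $(Z, \diff_Z(G) + c_t D_{k,t}/k)$ at $x$ is not $S$. The idea is to pin down the critical parameter $\bar t$, take an $\mathbb{R}$-Cartier limit divisor $D$ on $Z$, and lift it to $X$ via an extension/vanishing argument so that the corresponding lct-multiple $c_D \tilde D$ breaks the center of $(X, G)$ from the surface $S$ down to a curve or a point.

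First, for each large divisible $k$, define $\bar t_k$ to be the infimum of $t \in [\df_x(X,G), \tau]$ for which a general $D_{k,t}$ yields mlcc different from $S$. At $t = \df_x(X,G)$ the setup produces mlcc $S$, while the failure hypothesis gives $\bar t_k \leq \tau$, so $\bar t_k \in (\df_x(X,G), \tau)$. After passing to a convergent subsequence, set $\bar t = \lim \bar t_k$, and take a limit of the rescaled divisors $D_{k, \bar t_k}/k$ to obtain an $\mathbb{R}$-Cartier divisor $D$ on $Z$ linearly equivalent to $(1-\lambda) L|_Z$ with $\ord_x D \geq \bar t$. Let $c_D$ denote the lct of $(Z, \diff_Z(G), D)$ at $x$, so that $(Z, \diff_Z(G) + c_D D)$ is lc but not klt at $x$.

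Since $(Z, \diff_Z(G))$ is klt at $x$ by Theorem \ref{thm:Kawamata1998}, the new lc center arises purely from the addition of $c_D D$. By Lemma \ref{lemma:lambda-s-c} the coefficient $s$ of $S$ in $\diff_Z(G)$ is at most $0.11$, and the generality of the approximating $D_{k,t}$'s (hence of their limit $D$) rules out other divisors on $Z$ acquiring coefficient $1$; combined with the hypothesis that the new mlcc is not $S$, this forces the mlcc of $(Z, \diff_Z(G) + c_D D)$ at $x$ to have dimension at most one, i.e.\ a curve through $x$ or $x$ itself.

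Finally, $D$ is lifted to $X$ via a section-extension step. For large divisible $k$, the restriction map $H^0(X, k(1-\lambda) L \otimes \mathfrak{m}_x^{\lceil k \bar t \rceil}) \to H^0(Z, k(1-\lambda) L|_Z \otimes \mathfrak{m}_x^{\lceil k \bar t \rceil})$ is surjective, by Kawamata--Viehweg vanishing applied to a suitable multiplier-ideal-sheaf cutting out $Z$ near $x$ (obtained by a small perturbation of $G$ by an ample divisor). Taking the $\mathbb{R}$-Cartier limit of these lifts yields $\tilde D$ on $X$ with $\tilde D|_Z = D$, and Lemma \ref{Ambro-diff} gives $\diff_Z(G + c_D \tilde D) = \diff_Z(G) + c_D D$; hence the mlcc of $(X, G + c_D \tilde D)$ at $x$ matches that on $Z$, namely a curve or a point. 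The main technical obstacle is the interplay of the section lifting with the $\mathbb{R}$-Cartier limit: one must tune the vanishing input so that both the ideal-of-$Z$ structure and the $\mathfrak{m}_x^{\lceil k t \rceil}$-condition are simultaneously respected while controlling the limit, with the secondary subtlety of excluding an exotic new surface component being handled by Lemma \ref{lemma:lambda-s-c} together with the generality of $D$.
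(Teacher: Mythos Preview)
Your proposal has a genuine gap at the core step. You claim that ``the generality of the approximating $D_{k,t}$'s\ldots rules out other divisors on $Z$ acquiring coefficient $1$; combined with the hypothesis that the new mlcc is not $S$, this forces the mlcc\ldots to have dimension at most one.'' But this is exactly what fails. The stable-center property says the mlcc is the \emph{fixed} surface $S$; its negation allows the mlcc to be a \emph{different} surface $S'\subset Z$, and in fact this is the generic situation. The paper's proof begins by observing that one may as well assume every $(X,G+c_t\tilde D_{k,t})$ has a surface mlcc $S_{(t)}$ (else we are already done), and then shows via inversion of adjunction that $S_{(t)}$ must appear with coefficient $1$ in $\diff_Z(G)+c_tD_{k,t}$. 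So your generality claim is false: some surface component of $D_{k,t}$ \emph{always} reaches coefficient $1$ at the threshold, and your limit argument would just produce another divisor with a surface center.

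The missing idea is an explicit \emph{linear interpolation} rather than a parameter limit. The paper fixes two specific divisors $D_\tau$ and $D_t$ (with distinct surface mlcc's $S_1$ and $S_2$), forms the convex combination $D(u)=uD_\tau+(1-u)D_t$, and studies the linear functions $L(u;S_i)=\ord_{S_i}D(u)/(1-\ord_{S_i}\diff_Z(G))$. Since $L(0;S_2)$ dominates at $u=0$ and $L(1;S_1)$ dominates at $u=1$, there is a largest crossing $\bar u$ where two of these linear functions agree and both realize the threshold. At that $\bar u$ the pair $(Z,\diff_Z(G)+c(\bar u)D(\bar u))$ has two surface lc centers simultaneously, and their intersection is a curve or point lc center. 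Your continuity-in-$t$ limit does not produce this: even if the surface center jumps from $S$ to $S'$ as $t$ crosses $\bar t$, the limit of general $D_{k,t}$'s need not have both $S$ and $S'$ at coefficient exactly $1$, because the thresholds $c_t$ need not converge compatibly and the relevant quantity is not continuous in $t$ for a varying general divisor. The algebraic interpolation in $u$, with fixed endpoints, makes the crossover exact.
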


\begin{proof}
We may assume that for any $t\in[\df_x(X, G), \tau]$, $D_{k,t}\in L_{\lambda, k}(t)$ and $k$ sufficiently large, the minimal log canonical center of $(X,G+c_t\tilde{D}_{k,t})$ at $x$ is a surface $S_{(t)}$. 

For a number $t\in [\df_x(X, G), \tau]$ and a sufficiently large $k$, let $D$ be a divisor in $L_{\lambda, k}(t)$ and  $c$ is the log canonical threshold of the triple $(X, G, \tilde{D})$, where $\tilde{D}$ is a lifting of $D$. We claim that $cD=(1-s)S+T$, where $S\not\subset\supp(T_t)$ is a surface in $Z$ and $s=\ord_S(\diff_Z(G))$.

By Lemma \ref{Ambro-diff}, we note that $$(Z, \diff_Z(G+c\tilde{D}))=(Z, \diff_Z(G)+cD),$$ where $Z$ is the minimal log canonical center of $(X, G
)$ at $x$.  Note that $Z$ is also a log canonical center of $(X, G+c\tilde{D})$. 
By \cite[Theorem 1]{Hacon2014}, we know that $(X, G+c\tilde{D})$ is log canonical in a neighborhood of $Z$ if and only if $(Z, \diff_Z(G)+cD)$ is log canonical. Since $S$ is a log canonical center of $(X, G_1)$, then $S$ must be a log canonical center of $(Z, \diff_Z(G)+cD)$. Otherwise, $(Z, \diff_Z(G)+cD)$ is klt at $S$. Consequently, taking an arbitrary small ample divisor $F$ on $X$ containing $S$ but not $Z$, we see that $(Z, \diff_Z(G)+cD+F|_Z)$ is still klt at $S$, but $(X, G+c\tilde{D}+F)$ is no longer log canonical at $S$.  Therefore, $cD=(1-s)S+T$.

Fix a sufficiently large divisible $k_0$ such that for any $k\geq k_0$ the support of the fixed part of $L_{\lambda, k}(\tau)$ is the same as the fixed part of $L_{\lambda, k_0}(\tau)$. Let $D_\tau$ be a general element in $L_{\lambda, k_0}(\tau)$ and let $S_1$, $\ldots$, $S_r$ be the irreducible components of $D_\tau$. We may assume that $S_1$ is the minimal log canonical center of the pair $(X,G+c_\tau \tilde{D}_\tau)$ at $x$. 

Note that $L_{k,\lambda}(t_1)\subseteq L_{k, \lambda}(t_2)$ for any $\df_x(X, G)\leq t_2\leq t_1\leq \tau$ and any sufficiently large divisible $k$. Take a sufficiently large divisible $k\geq k_0$. For each $i=1, \dots, r$ and a general $D_{k, t}\in L_{\lambda, k}(t)$, we see that $\ord_{S_i}D_{k, t}\leq \ord_{S_i}D_{\tau}$ for any $t\leq \tau$.   Hence, the support of the fixed part of $L_{\lambda, k}(t)$ is contained in $\supp(D_{\tau})$.  Denote by $S_{(k, t)}$ is the minimal log canonical center of $(X, G+c_t\tilde{D}_t)$ at $x$. Then $S_{(k, t)}$ is the only log canonical center of $(Z,\diff_Z(G)+c_tD_{k, t})$.  By the generality of $D_{k, t}$, we know that $S_{(k,t)}$ is in the set $\{S_1, \dots, S_r\}$.  We thus obtain that 
$$c_t=\min\{\frac{1-\ord_{S_i}\diff_Z(G)}{\ord_{S_i}(D_{k,t})}\mid i=1, \dots, r\}.$$ 

By the assumption of the lemma, we may assume that $D_{k, t}$ is a divisor such that the minimal log canonical center $S_{(k, t)}$ of $(X, G+c_{t}\tilde{D}_{k,t})$ at $x$ is another surface, say $S_{(k, t)}=S_2$.  We will show there is  an $\mb{R}$--linear combination \[D(u):=uD_\tau+(1-u)D_{t}\in L_{k, \lambda}(\bar{t})\] for some $u\in [0, 1]$ and $\bar{t}\in [t, \tau]$ such that the minimal log canonical center of $(X, G+c(\bar{u})D(\bar{u}))$ at $x$ is not a curve or a point.
 
Consider the following linear functions of $u$
\[L(u; S_i)=\frac{\ord_{S_i}(D(r)|_Z)}{1-\ord_{S_i}\diff_Z(G)}=\frac{\ord_{S_i}D_\tau-\ord_{S_i}(D_t)}{1-\ord_{S_i}\diff_Z(G)}\cdot u+\frac{\ord_{S_i}(D_t)}{{1-\ord_{S_i}\diff_Z(G)}}.\]
Denote by $\mf{U}$ the finite set  $\{u\in [0,1]\mid L(u; S_1)=L(u; S_i), i=1, \dots, r\}.$
 Note that 
\[c_t=\frac{1-\ord_{S_{2}}\diff_Z(G)}{\ord_{S_{e}}(D_t|_Z)}<\frac{1-\ord_{S_i}\diff_Z(G)}{\ord_{S_i}(D_t|_Z)}\]  
for each $i\neq 2$
and 
\[c_\tau=\frac{1-\ord_{S_1}\diff_Z(G)}{\ord_{S_1}(D_\tau|_Z)}< \frac{1-\ord_{S_{i}}\diff_Z(G)}{\ord_{S_{i}}(D_\tau|_Z)}\]
for each $i\neq 1$.  Note that there is a number $u_{(t)}\in [0, 1]$ such that 
\[\frac{1}{L(u_0; S_1)}=\frac{1-\ord_{S_1}\diff_Z(G)}{\ord_{S_1}(D(r)|_Z)}=\frac{1-\ord_{S_{(t)}}\diff_Z(G)}{\ord_{S_{(t)}}(D(r)|_Z)}=\frac{1}{L(u_{(t)}; S_{(t)})}.\]
Then $\mf{U}$ is nonempty. 
Let $\bar{u}$ is the largest number in $\mf{U}$ such that $L(\bar{u}; S_1)=L(\bar{u}; S_i)$ for some $i=1, \dots, r$. Then the pair $(Z,\diff_Z(G)+c(\bar{u})D(\bar{u}))$ has exactly two log canonical centers $S_1$ and $S_i$ at $x$. Consequently, the intersection $S_1\cap S_{i}$ is a lower dimensional log canonical center of $(X,  G+c(\bar{u})\tilde{D}(\bar{u}))$ at $x$.
\end{proof}

If the stable-center property fails, then Lemma \ref{twocenters} shows that there exists a $G'$ such that the minimal log canonical center of $G'$ at $x$ is a curve or a point, and our induction will run through. 
 
If the stable-center property holds, we will show that there exists a divisor $G'$ satisfying the inequality $\frac{\df_x(X, G')}{1-\lambda'}<3$. The divisor $G'$ that we are looking for is a divisor $G+c_{k, \eta}\tilde{D}_{\eta}$, where $\tilde{D}_{k,\eta}$ is 
 a lift of  a divisor $D_{k, \eta}\in L_{\lambda, k}(\eta)$ to $X$,  $c_{k,\eta}$ is the log canonical threshold of the triple $(X, G, \tilde{D}_{k, \eta})$ and $\eta\geq \df_x(X, G)$. 

By Lemma \ref{twocenters}, we may assume that for any $t\in [\df_x(X, G), \tau]$ and $D_{k, t}\in L_{\lambda, k}(t)$, we have $\ord_S(D_{k, t})\geq 1-s$.

Assume $\df_x(X, G)$, $\lambda$, $c$, $s$ are in the region determined by inequalities in Lemma \ref{lemma:lambda-s-c}. By a volume calculation (Proposition \ref{weildiv} and Lemma \ref{lengthdiv}), we will show that there exists an effective $\mb{Q}$--Cartier divisor $D'$ linear equivalent to $(1-\lambda)L|_Z$ with a larger order of vanishing $\eta:=\ord_xD'|_Z$ so that the induction can be proceeded with the new divisor $G'=G+c'D'$ instead of $G_1$. 

Let $\eta$ be the largest number such that there is an effective $\mb{Q}$--Cartier divisor $D'$ linearly equivalent to $(1-\lambda)L|_Z$ with $\ord_xD'=\eta\geq \df_x(X, G)$ and let $c'$ be the minimal number such that $(X, G'=G+c'D')$ is log canonical at $x$ and the minimal log canonical center of $(X, G')$ is $S$. Since $\df_x(X,G')\leq \min\{\frac{1}{2}, \df_x(X, G)-c'\eta\}$, we know that
\begin{equation}\label{final-def}
\begin{split}
\frac{\df_x(X,G')}{1-\lambda'}&\leq\frac{\min\{\frac{1}{2}, \df_x(X, G)-c'\eta\}}{(1-\lambda)(1-c')}\\
&\leq \frac{\eta}{(1-\lambda)(2(\eta-\df_x(X, G))+1)}\\
&\leq \frac{\sigma\eta}{(\sigma-5+\df_x(X, G))(2(\eta-\df_x(X, G))+1)}
=:f(\eta,\df_x(X, G)).
\end{split}
\end{equation}

Take $A=(1-\lambda)L|_Z$, we have shown that $\psi_S(t)\geq 1-s$ for any $t\in [\df_x(G),\tau]$.  On $(Z,\diff_Z(G))$, we have seen that $\eta\geq\tau\geq \df_x(G)\geq 1.63>\frac{4}{3}(1-s)$. On the other hand, if $\eta\geq 4(1-s)\geq 3.56$,  then $f(\eta,\df_x(G))<3$. So we now assume that $\eta\in [\frac{4}{3}(1-s),4(1-s)]$. Apply Lemma \ref{lengthdiv} and Proposition \ref{weildiv}, we have an lower bound for $\vol(\gamma, A):$

\[\begin{split}\vol(\gamma, A)=&A^3-\vol(0,\gamma, A)\\
\geq & A^3- \left(\int_0^{\gamma}9t^2\d t - \int_{\df_x(G)}^{\gamma} h(t,1-s)\d t\right)\\
\geq &  A^3-3\gamma^3+\int_{\df_x(G)}^{\gamma} h(t,1-s)\d t\\
\geq & 
A^3-3\gamma^3+ \int_{\df_x(G)}^{\gamma} 4(1-s)^2-3(t-2(1-s))^2 \d t  \\
=&A^3-3\gamma^3 + 4(1-s)^2(\gamma-\df_x(X, G))\\
&-((\gamma-2(1-s))^3-(\df_x(X, G)-2(1-s))^3)\\
\end{split}
\]

The inequality $\lambda \leq \frac{5-\df_x(X,G)}{\sigma}$ implies that $A^3=(1-\lambda)^3(L|_Z)^3\geq (0.9999+\df_x(X, G))^3$. Let $\eta'$ be the largest real number such that the following inequality of $\gamma$ holds 
\begin{gather*}(0.9999+\df_x(G))^3-3\gamma^3 + 4(1-s)^2(\gamma-\df_x(X,G))\\ 
-((\gamma-2(1-s))^3-(\df_x(X,G)-2(1-s))^3)\geq 0.\end{gather*}
Consider $\eta'=\eta'(s,\df_x(X,G))$ as a function of $\df_x(X,G)$ and $s$. For a fixed $\df_x(X,G)$, when $s$ increases,  $\eta'$ decreases (since the integrand $4(1-s)^2-3(t-2(1-s))^2$ decreases) and $f(\eta', \df_x(X,G))$ increases.  Take $\eta=\eta'-\epsilon$ for sufficiently small $\epsilon$ and $G'$ constructed as before. We know that 
\begin{eqnarray*}
\frac{\df_x(X,G')}{1-\lambda'}&\leq & f(\eta, \df_x(X,G))\\
&\leq & f(\eta'(0.11,\df_x(X,G)), \df_x(X,G))=:g(\df_x(X,G))
\end{eqnarray*}

We can check that $g(\df_x(X,G))$ is an increasing function of $\df_x(X,G)$ for $\df_x(X,G)\in [1.63,\frac{5}{3}]$. Hence $\frac{\df_x(X,G')}{1-\lambda'}\leq g(5/3)\leq 2.98.$

This completes the proof of our main theorem.
\end{proof}


\appendix
 
\newpage
\section{Volumes on Weil divisors \label{sec:vol-Weil-div}}
In this section, we will calculate the volume of a grade linear system whose fixed part consists of a Weil divisor component.   

We denote by $Z$ a normal projective variety of dimension $n$, $x$ a closed point in $Z$ and $\mf{m}_x$ the maximal idea of $x$ in $Z$. Denote by $m=\mult_xZ$  the multiplicity of $Z$ at $x$.    Let $S$ be a prime Weil divisor on $Z$ with multiplicity $m'=\mult_xS$.  Let $\Delta$ be an effective $\mb{Q}$--divisor on $Z$ such that $(Z, \Delta)$ is a klt pair. For any function $f$ in $\O_Z$, we define the order of vanishing of $f$ at $x$ as 
\begin{equation}\ord_x(f) =\max\{k\mid f\in\mf{m}_x^k\}. \label{ord-function}\end{equation}
Denote by $\IN(f)= f \textup{~modulo~} \m_x^{\ord_x(f)+1}$ the initial form of a function $f\in \O_Z$.

\begin{lemma}\label{integral-tangent-cone}
Let $Z\subset \mb{P}^N$ be a normal variety of dimension 3 with klt singularity at a closed $x\in Z$ whose embedding dimension is $5$. Assume that a general hyperplane section $Z\cap H$ has an ordinary triple point singularity at $x$, i.e. the minimal resolution consists of only one exceptional curve whose self-intersection number is $-3$. Then the order of vanishing function $\ord_x$ on $Z$ defined by \eqref{ord-function} is a discrete valuation. 
\end{lemma}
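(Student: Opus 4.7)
The order function $\ord_x$ is a discrete valuation on $\O_{Z,x}$ if and only if it is multiplicative, i.e.\ $\ord_x(fg) = \ord_x(f) + \ord_x(g)$ for all nonzero $f,g$; the subadditivity $\ord_x(f+g)\geq \min\{\ord_x(f),\ord_x(g)\}$ is automatic from the definition. Multiplicativity is in turn equivalent to the associated graded ring
\[
\bar A \;:=\; \mathrm{gr}_{\m_x}\O_{Z,x} \;=\; \bigoplus_{k\geq 0}\m_x^k/\m_x^{k+1}
\]
being an integral domain, since failure of multiplicativity is exactly the existence of nonzero homogeneous $\IN(f),\IN(g)\in\bar A$ with $\IN(f)\cdot\IN(g)=0$. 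So my entire plan is to prove that $\bar A$ is a domain, using the hyperplane section and the known structure of the surface singularity.

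My first step is to observe that we are in the numerically lucky regime of \emph{minimal multiplicity}. Since $Z$ is klt in characteristic zero, $Z$ has rational singularities and hence $A:=\O_{Z,x}$ is Cohen--Macaulay of dimension $3$. The multiplicity of $Z$ at $x$ equals that of a generic hyperplane section (via a superficial element), so $\mult_xZ = \mult_x(Z\cap H)=3$. With embedding dimension $5$, the embedding codimension is $h=2$ and Abhyankar's inequality $e\geq h+1$ is saturated: $3=2+1$. By Sally's theorem on local Cohen--Macaulay rings of minimal multiplicity, $\bar A$ is itself Cohen--Macaulay.

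Next, choose a general linear form $\ell\in\bar A_1$ lifting to a general hyperplane section through $x$. Because $\bar A$ is three-dimensional Cohen--Macaulay, the general $\ell$ avoids every associated prime (there are only minimal ones, and each has dimension $3$), so $\ell$ is a non-zero-divisor on $\bar A$, and $\bar A/(\ell)$ is the tangent cone ring of $(Z\cap H,x)$. By the hypothesis on the minimal resolution, $(Z\cap H,x)$ is the cyclic quotient $\frac{1}{3}(1,1)=\mathrm{Spec}\,k[x,y]^{\mb Z_3}$; its coordinate ring is generated by $x^3,x^2y,xy^2,y^3$ with relations the $2\times 2$ minors of a generic Hankel matrix, so the tangent cone at the origin is the affine cone over the twisted cubic in $\mb P^3$. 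This is a projectively normal variety, hence its cone is a two-dimensional normal (in particular integral) domain.

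Finally, a standard graded descent argument promotes this to integrality of $\bar A$: if $fg=0$ in $\bar A$ with $f,g$ nonzero homogeneous, then $\bar f\cdot\bar g=0$ in the domain $\bar A/(\ell)$, so $\bar f=0$ say, and $f=\ell f_1$; regularity of $\ell$ on $\bar A$ gives $f_1 g=0$, and induction on $\deg f$ forces $f=0$ or $g=0$. Thus $\bar A$ is a domain, $\ord_x$ is multiplicative, and the lemma follows.

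The main obstacle is really the numerical coincidence that puts us in the minimal multiplicity regime; without it one cannot guarantee that $\bar A$ is Cohen--Macaulay, and then a generic linear form need not be regular on $\bar A$, which would break the descent step. The secondary technical point is identifying the tangent cone of the $\frac{1}{3}(1,1)$ singularity explicitly as the cone over the twisted cubic, but this is a direct invariant-theoretic computation.
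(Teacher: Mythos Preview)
Your proof is correct and follows the same approach as the paper: minimal multiplicity plus Sally's theorem gives that $\mathrm{gr}_{\m_x}\O_{Z,x}$ is Cohen--Macaulay, a general linear form is then a non-zero-divisor whose quotient is the (integral) tangent cone of the surface section, and integrality lifts back to the full tangent cone. The only cosmetic differences are that you identify the surface singularity explicitly as the cyclic quotient $\tfrac{1}{3}(1,1)$ with tangent cone the cone over the twisted cubic (the paper leaves this implicit), and you lift integrality by a direct graded non-zero-divisor descent rather than the paper's ``irreducible and generically reduced plus Cohen--Macaulay implies integral'' phrasing; your induction should strictly be on $\deg f+\deg g$ rather than $\deg f$, but this is a trivial fix.
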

\begin{proof}
Notice that $Z$ is a minimal multiplicity variety, i.e. $m=e-n+1$. By \cite[Theorem 2]{Sally1977}, the affine tangent cone of $Z$ at $x$, $C_xZ$ is Cohen-Macaulay. In particular, the depth of $C_xZ$ is $3$. Let $H$ be a general hyperplane section of $Z$, that is defined by an element $f\in \frak{m}/\frak{m}^2$. Let $H'$ be the hyperplane of tangent space defined by $\IN(f)$. Since $\IN(f)$ is not a zero divisor, we have $C_x(Z\cap H)=C_xZ\cap H'$ which is irreducible and reduced. The generality of $H$ implies that $C_xZ$ is irreducible and generically reduced. Since $C_xZ$ is Cohen-Macaulay, generically reduced implies reduced. Hence $C_xZ$ is an integral domain. Thus $\ord_x$ is a discrete valuation (see for example \cite[Theorem 6.7.8]{Huneke2006}). \end{proof}

\begin{lemma}\label{lengthCartierdiv} 
Let $D$ be a $\mathbb{Q}$--Cartier divisor on a normal projective variety $Z$ of dimension $n$ with multiplicity $m'=\mult_xD$ and order of vanishing $a=\ord_xD$.  Assume that $\ord_x$ is a discrete valuation on $Z$.  Then for any rational numbers $t, r\geq0$ and sufficiently large divisible $k$,  the function
\[l(t,r):=\length( \dfrac{\mf{m}_x^{kt}}{\mf{m}_x^{kt+1}+\mf{m}_x^{kt}\cap \O_X(-krD)})\]
is a polynomial of $k$ of degree $n-1$ whose leading coefficient is a function $h(t,r)$ satisfying 
\[h(t,r)\geq \frac{m'r}{(n-2)!}(t-ar)^{n-2}\quad \text{~whenever~}\quad t\geq ar.\] 
\end{lemma}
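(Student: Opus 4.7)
The plan is to identify $l(t,r)$ with a Hilbert-function difference in the associated graded ring $A := \mathrm{gr}_{\mf{m}_x}\O_{Z,x}$. Since $\ord_x$ is a valuation on $Z$, $A$ is a graded integral domain of Krull dimension $n$ whose Hilbert polynomial has degree $n-1$ and leading coefficient $m/(n-1)!$. Choose $k$ sufficiently divisible so that $kt$ and $kra$ are integers and $krD$ is Cartier, and let $f := f_{k,r}\in \O_{Z,x}$ be a local equation for $krD$; then $\ord_x(f)=kra$ and $\O_X(-krD)=(f)$ locally at $x$. Using that $\ord_x$ is a valuation, one obtains the key identity
\[\mf{m}_x^{kt}\cap (f) \;=\; f\cdot \mf{m}_x^{kt-kra} \qquad (t\geq ar),\]
so that passing to initial forms identifies the image $W_k$ of $\mf{m}_x^{kt}\cap (f)$ in $V_k := \mf{m}_x^{kt}/\mf{m}_x^{kt+1}$ with $\IN(f)\cdot A_{kt-kra}\subseteq A_{kt}$. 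Because $A$ is a domain, multiplication by the nonzero element $\IN(f)\in A_{kra}$ is injective, so $\dim_k W_k = H_A(kt-kra)$ (with the convention $H_A(j)=0$ for $j<0$, which handles $t<ar$).

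Consequently $l(t,r) = H_A(kt) - H_A(kt-kra)$, which for sufficiently large divisible $k$ is a polynomial in $k$ of degree $n-1$ with leading coefficient
\[h(t,r) \;=\; \frac{m}{(n-1)!}\bigl(t^{n-1}-(t-ar)^{n-1}\bigr).\]
Applying the mean value theorem to $x\mapsto x^{n-1}$ on the interval $[t-ar,t]$ gives $t^{n-1}-(t-ar)^{n-1}\geq (n-1)\cdot ar\cdot (t-ar)^{n-2}$, and hence
\[h(t,r)\;\geq\;\frac{m\cdot ar}{(n-2)!}(t-ar)^{n-2}.\]

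The remaining step—and the main point of the proof—is to establish $ma\geq m'$, which then replaces $ma$ by $m'$ on the right-hand side and completes the argument. There is a natural surjection of $(n-1)$-dimensional graded rings
\[A/(\IN f)\;\twoheadrightarrow\;\mathrm{gr}_{\bar{\mf{m}}_x}(\O_{krD,x}),\]
so the multiplicity of the source dominates that of the target: $\mult(A/(\IN f))\geq \mult(\mathrm{gr}\,\O_{krD,x}) = \mult_x(krD) = kr\cdot m'$. On the other hand, since $A$ is a domain and $\IN(f)$ is a nonzero homogeneous element of degree $kra$, multiplication by $\IN(f)$ on $A$ is injective and a direct Hilbert-polynomial computation gives $\mult(A/(\IN f)) = m\cdot kra$. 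Combining yields $m\cdot kra\geq kr\cdot m'$, i.e.\ $ma\geq m'$, as required. The valuation hypothesis is indispensable precisely at this step: without $A$ being a domain, $\IN(f)$ could be a zero-divisor in $A$ and the dimension count for $A/(\IN f)$ would break down.
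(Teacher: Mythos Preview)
Your argument is correct and in fact cleaner than the paper's, but it proceeds along a genuinely different line. The paper does not compute $l(t,r)$ exactly: it fixes a Cartier multiple $\tilde{D}=k_0D$ with local equation $f$, builds a filtration $I_{-1}\supset I_0\supset\cdots\supset I_{k/k_0-1}$ of $\mf{m}_x^{kt}/(\mf{m}_x^{kt+1}+\mf{m}_x^{kt}\cap(f)^{k/k_0})$ by the ideals $F_i=\O_Z(-i\tilde{D})\cdot\mf{m}_x^{kt-iak_0}$, and shows that each successive quotient $I_{i-1}/I_i$ receives an injection from the graded piece $\bar{\mf{m}}_x^{\,kt-iak_0}/\bar{\mf{m}}_x^{\,kt-iak_0+1}$ of $\mathrm{gr}\,\O_{\tilde{D},x}$; summing the resulting lower bounds (each of size roughly $\frac{k_0m'}{(n-2)!}(kt-iak_0)^{n-2}$) over $i$ gives the stated inequality directly in terms of $m'$, via a Riemann-sum estimate. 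By contrast, you pass entirely to the associated graded ring $A=\mathrm{gr}_{\mf{m}_x}\O_{Z,x}$, use the valuation hypothesis to identify $\mf{m}_x^{kt}\cap(f)=f\cdot\mf{m}_x^{kt-kra}$ and hence obtain the \emph{exact} formula $l(t,r)=H_A(kt)-H_A(kt-kra)$, and only at the end trade $m=\mult_xZ$ for $m'=\mult_xD$ via the multiplicity comparison $ma\geq m'$.

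Two remarks on what each route buys. Your exact formula $h(t,r)=\tfrac{m}{(n-1)!}\bigl(t^{n-1}-(t-ar)^{n-1}\bigr)$ is strictly sharper than the lemma's conclusion, and your final step is in fact an equality: under the valuation hypothesis one has $\IN((f))=(\IN f)$, so your surjection $A/(\IN f)\twoheadrightarrow \mathrm{gr}\,\O_{krD,x}$ is an isomorphism and $ma=m'$ on the nose. On the other hand, the paper's filtration argument is the template reused in the next lemma (Lemma~\ref{lengthdiv}) for a prime \emph{Weil} divisor $S$, where one no longer has a single local equation and cannot write $l(t,r)$ as a Hilbert-function difference of $A$; there the filtration-and-injection mechanism combined with Green's hyperplane restriction (Lemma~\ref{lemma:Green-Application}) is what survives, which is presumably why the authors set things up this way already in the Cartier case.
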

 
\begin{proof}Since the statement is local in nature, we assume that $Z$ is affine.
We first assume that $r=1$. We may and will assume that $t$ is a rational number. Let $k_0$ be the integer such that $\tilde{D}=k_0D$ is Cartier and $\ord_xD=\ord_x\tilde{D}/k_0$, i.e. $k_0$ computes the order of vanishing of $D$.  Since $k$ is sufficiently large divisible, we will assume that $r=\frac{k}{k_0}$, $kt$ and $ka$ are integers.  

Then $\O_Z(-kD)=(f)^{r}$, where $f$ is a defining equation of $k_0D$. Since $\ord_xD=a$, then $(f)\subset \mf{m}_x^{k_0a}$ but $(f)\not\subset\mf{m}_x^{k_0a+1}$.  

For each $0\leq i\leq ks/k_0-1$,  we let $F_i=\O_Z(-i\tilde{D})\cdot (\m_x^{kt-iak_0}\cap \O_Z(-\tilde{D}))$. Then $F_i=O_Z(-i\tilde{D})\cdot (\m_x^{kt-(i+1)ak_0}\cdot \O_Z(-\tilde{D})) =\O_Z(-(i+1)\tilde{D})\cdot \mf{m}_x^{kt-(i+1)ak_0}$.  Denote by 
\[I_i=F_i\cdot(\O_Z/(\mf{m}_x^{kt+1}+\mf{m}_x^{kt}\cap \O_X(-kD)))=\frac{F_i+\mf{m}_x^{kt+1}+\mf{m}_x^{k(t-a)}\cdot \O_X(-r\tilde{D})}{\mf{m}_x^{kt+1}+\mf{m}_x^{k(t-a)}\cdot \O_X(-r\tilde{D})}\] the image of $F_i$ in $\O_Z/(\mf{m}_x^{kt+1}+\mf{m}_x^{k(t-a)}\cdot \O_X(-r\tilde{D}))$.

Set $I_{-1}=\m_x^{kt}/(\mf{m}_x^{kt+1}+\mf{m}_x^{kt}\cap \O_X(-kD))$. We have the following filtration
\[I_{-1}\supset I_0\supset I_1\supset \cdots\supset I_{k/k_0-1}\supset (0).\]
Then \[l(t,1)\geq \sum\limits_{i=0}^{\frac{k}{k_0}-1}\length(I_{i-1}/I_{i}).\]

We note that 
\[\begin{split} I_{i-1}/I_i&=\frac{F_{i-1}+\mf{m}_x^{kt+1}+\mf{m}_x^{k(t-a)}\cdot \O_X(-r\tilde{D})}{F_i+\mf{m}_x^{kt+1}+\mf{m}_x^{k(t-a)}\cdot \O_X(-r\tilde{D})}\\
&=F_{i-1}/{D},
\end{split}
\]
where
\[\begin{split}
D&=F_{i}+F_{i-1}\cap(\mf{m}_x^{kt+1}+\mf{m}_x^{k(t-a)}\cdot \O_X(-r\tilde{D}))\\
&=\mf{m}_x^{kt-(i+1)ak_0}\cdot \O_Z(-(i+1)\tilde{D})+ \mf{m}_x^{kt-iak_0+1}\cdot\O_Z(-i\tilde{D}) +  \mf{m}_x^{k(t-a)}\cdot \O_X(-r\tilde{D})\\
&=\mf{m}_x^{kt-(i+1)ak_0}\cdot \O_Z(-(i+1)\tilde{D}) + \mf{m}_x^{kt-iak_0+1}\cdot\O_Z(-i\tilde{D})
.
\end{split}
\]

Denote by 
\[\bar{\mf{m}}_x=\frac{\mf{m}_x}{\O_Z(-{k_0}S)\cap\mf{m}_x}=\frac{\mf{m}_x}{\O_Z(-{k_0}S)}.\] Since the multiplicity of the scheme $\tilde{D}$  is $k_0\mult_0(D)=k_0m'$. 
When $i\leq k/k_0-1$, we see that $kt-iak_0\geq k(t-a)+ak_0$ is sufficiently large if $k$ is suffiently large.  Therefore, we have
\[\length(\frac{\bar{\m}_x^{kt-iak_0}}{\bar{\m}_x^{kt-iak_0+1}})\geq \frac{k_0m'}{(n-2)!}(kt-aik_0)^{n-2}.\] Note that 
\[ \begin{split}
\frac{\bar{\m}_x^{kt-iak_0}}{\bar{\m}_x^{kt-iak_0+1}}&=\frac{\m_x^{kt-iak_0}+\O_Z(-\tilde{D})}{\m_x^{kt-iak_0+1}+\O_Z(-\tilde{D})}\\
&=\frac{\m_x^{kt-iak_0}+\m_x^{kt-iak_0+1}+\O_Z(-\tilde{D})}{\m_x^{kt-iak_0+1}+\O_Z(-\tilde{D})}\\
&=\frac{\m_x^{kt-iak_0}}{\m_x^{kt-iak_0+1}+\O_Z(-\tilde{D})\cap \m_x^{kt-iak_0}}\\
&=\frac{\m_x^{kt-iak_0}}{\m_x^{kt-iak_0+1}+\m_x^{kt-(i+1)ak_0}\cdot \O_Z(-\tilde{D})}
\end{split}
\]

For each $i\geq 0$, we consider the map
\[ 
\begin{split}\phi_{i}: \frac{\m_x^{kt-iak_0}}{\m_x^{kt-iak_0+1}+ \m_x^{kt-(i+1)ak_0}\cdot \O_Z(-\tilde{D})}
&\to I_{i-1}/I_{i}\\
{\bar{g}}&\mapsto\bar{gf^{i}},
\end{split}
\]
where $g$ is an element in $\m_x^{kt-iak_0}$, $\bar{g}$ denotes the image of $x$ in $\frac{\bar{\m}_x^{kt-iak_0}}{\bar{\m}_x^{kt-iak_0+1}}$ and $\bar{gf^{i}}$ denotes the image of  $gf^{i}$ in $I_{i-1}/I_{i}$.
The construction of our filtration implies that $\phi_{i}$ is a well-defined morphism of $\kappa(x)$-modules, where $\kappa(x)$ is the residual field of $x$. 

We now claim that $\phi_i$ is an injective morphism for every $i\leq \frac{k}{k_0}-1$. Assume there is an element $g\in\m_x^{kt-iak_0}$ such that 
\[g\not\in \m_x^{kt-iak_0+1}+\m_x^{kt-(i+1)ak_0}\cdot \O_Z(-\tilde{D}),\] but $\bar{gf^{i}}=0$, i.e. 
\[gf^{i}\in D=\mf{m}_x^{kt-(i+1)ak_0}\cdot \O_Z(-(i+1)\tilde{D}) + \mf{m}_x^{kt-iak_0+1}\cdot\O_Z(-i\tilde{D}).\]  But multiplying $f^{-i}$, we will get $g\in \mf{m}_x^{kt-iak_0+1}+\mf{m}_x^{kt-(i+1)ak_0}\cdot \O_Z(-\tilde{D})$.  Therefore $\bar{g}=0$.

Therefore, 
\[\begin{split}l(t, 1)&= \sum_{i=0}^{\frac{k}{k_0}-1}\length(\frac{\bar{\m}_x^{kt-iak_0}}{\bar{\m}_x^{kt-iak_0+1}})\\
&\geq \sum_{i=0}^{\frac{k}{k_0}-1}\frac{k_0 m'}{(n-2)!}(kt-aik_0)^{n-2}\\
&>  \frac{k_0m'}{(n-2)!}\frac{k}{k_0}(kt-a(\frac{k}{k_0}-1)k_0)^{n-2}\\
&= \frac{m'}{(n-2)!}(t-a)k^{n-1}+ o(k).
\end{split}\]

For general $r$, we let $kr=k'$ and $t'=\frac{t}{r}$. Then 
\[\begin{split}\l(t,r)=l(t', 1)&\geq  \frac{m'}{(n-2)!}(t'-a)^{n-2}(k')^{n-1}+ o(k)\\
&= \frac{m'}{(n-2)!}(\frac{t}{r}-a)^{n-2}(kr)^{n-1}+ o(k)\\
&= \frac{m'r}{(n-2)!}(t-ar)^{n-2}k^{n-1}+ o(k).
\end{split}\]
Therefore, $h(t, r)\geq  \frac{m'r}{(n-2)!}(t-ar)^{n-2}$.
\end{proof}

Let $d$ be a positive integer. For any positive integer $\alpha$, there is a unique decreasing sequence of integers $c(d)>c(d-1)>\cdots>c(1)\geq 0$, called the $d$-th Macaulay coefficients of $\alpha$,  such that 
\[\alpha={c(d)\choose d}+{c(d-1)\choose d-1}+\cdots +{c(1)\choose 1}.\]
Denote
\[\alpha_{<d>}={c(d)-1\choose d}+{c(d-1)-1\choose d-1}+\cdots +{c(1)-1\choose 1}.\]
Here, we use the convention that ${i\choose j}=0$ for $i<j$.

Recall Green's hyperplane restriction theorem (see for example \cite[Theorem 􏰇􏰂􏰄􏰂􏰁􏰄4.2.12]{Bruns1993}).
\begin{thm}[Green] \label{thm:Green-hyperplaen-restriction}
Let $R$ be a  homogeneous $k$-algebra, where $k$ is a infinite field, and $n$ be an positive integer. For any general linear form $h\in R_1$, we have 
\[H(R/(h), n)\leq H(R, n)_{<n>},\]
where $H(\cdot, n)$ denotes the Hilbert function of a graded ring.
\end{thm}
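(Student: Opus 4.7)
The plan is to establish Green's theorem by reducing the general case to the case of a lex-segment ideal, where the inequality becomes a direct combinatorial count. Write $R = S/I$ with $S = k[x_1,\dots,x_N]$ the polynomial ring and $I$ a homogeneous ideal. The statement concerns a \emph{general} linear form $h$, so after a linear change of coordinates it suffices to analyze $H(S/(I+x_N),n)$, provided we verify that this value (for an appropriately generic coordinate system) bounds $H(S/(I+h),n)$ for generic $h$.

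The first main reduction is to pass from $I$ to its generic initial ideal $J = \mathrm{gin}_{\mathrm{revlex}}(I)$ with respect to the reverse lexicographic order. Then $J$ is a Borel-fixed monomial ideal with the same Hilbert function as $I$, and $x_N$ plays the role of the generic linear form for $J$. The classical semi-continuity statement (Galligo's theorem and the properties of revlex gin) gives $H(S/(I+h),n) \le H(S/(J+x_N),n)$ for generic $h$. For a Borel-fixed monomial ideal, quotienting by $x_N$ is transparent: $(S/(J+x_N))_n$ is spanned precisely by the degree-$n$ monomials not in $J$ that are not divisible by $x_N$.

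The second reduction is via Macaulay's theorem: there is a unique lex-segment ideal $L \subseteq S$ with $H(S/L,m) = H(S/I,m)$ for all $m$. Because a lex-segment ideal contains in each degree the lexicographically \emph{largest} monomials, and $x_N$ is the lex-smallest variable, the number of monomials outside $L$ in degree $n$ that are not divisible by $x_N$ is maximal among all Borel-fixed monomial ideals with the same Hilbert function; thus $H(S/(J+x_N),n) \le H(S/(L+x_N),n)$. Finally, for the lex-segment ideal itself, one verifies by a direct bookkeeping on the Macaulay expansion $\alpha = \binom{c(d)}{d}+\cdots+\binom{c(1)}{1}$ of $\alpha = H(S/L,n)$ that the monomials of degree $n$ outside $L$ not divisible by $x_N$ are counted by $\binom{c(d)-1}{d}+\cdots+\binom{c(1)-1}{1} = \alpha_{<n>}$: the lex-segment structure in $k[x_1,\dots,x_{N-1}]$ obtained by deleting $x_N$ precisely shifts each binomial index down by one.

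The main obstacle is the first reduction: controlling the generic linear form by means of the revlex gin. This step requires the fact that under the revlex order, the generic initial ideal commutes (in a suitable sense) with the operation "quotient by the last variable", together with the upper semi-continuity of Hilbert functions of hyperplane sections. Once this step is granted, the remaining arguments are combinatorial and amount to comparing monomial ideals with a common Hilbert function, where the lex-segment provides the extremal case.
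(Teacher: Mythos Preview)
The paper does not prove this theorem; it is quoted with a reference to \cite[Theorem~4.2.12]{Bruns1993}, so there is no argument on the paper's side to compare with.

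Your outline follows the standard architecture via $\mathrm{gin}_{\mathrm{revlex}}$, but the second reduction has a gap. You assert that among Borel-fixed ideals $J$ with $H(S/J,n)=\alpha$, the lex-segment ideal $L$ maximizes $H(S/(\,\cdot+x_N),n)$, on the grounds that $L_n$ consists of the lex-largest monomials while $x_N$ is lex-smallest. That reasoning does not work: divisibility by $x_N$ is not monotone for the lex order (for instance $x_1x_N>_{\mathrm{lex}}x_2^2$ in three variables), so there is no a~priori reason the lex-top block of a given size should contain the fewest $x_N$-free monomials. The extremality statement you want is in fact true, but establishing it is no easier than proving Green's inequality directly for Borel-fixed ideals. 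The standard arguments (Green's original, and the one in Bruns--Herzog) bypass this detour entirely: they show directly that any strongly stable $J$ satisfies $H(S/(J+x_N),n)\le H(S/J,n)_{<n>}$, typically by induction on the number of variables, using that $J\cap k[x_1,\dots,x_{N-1}]$ is again strongly stable. Your final bookkeeping identity $H(S/(L+x_N),n)=\alpha_{<n>}$ for the lex ideal is correct.

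A minor point on the first reduction: the inequality $H(S/(I+h),n)\le H(S/(\mathrm{gin}(I)+x_N),n)$ for generic $h$ is actually an equality (this is precisely how revlex gin interacts with generic hyperplane sections, per Bayer--Stillman/Galligo), though the direction you wrote is the one needed.
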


\begin{lemma}\label{lemma:Green-Application}
Under assumptions in Lemma \ref{integral-tangent-cone}, we let $S$ be a prime Weil Divisor on $Z$. Then for any integer $d\geq 1$, we have
\[H_k(d)_{<d>}\geq h_k(d),\]
where \[H_k(d):=\dim H^0(Z, \frac{\mf{m}_x^d}{\mf{m}_x^{d+1}+\mf{m}_x^d\cap\O_Z(-kS)}),\]
\[h_k(d):=\dim H^0(H\cap Z, \frac{\mf{m}_x^d}{\mf{m}_x^{d+1}+\mf{m}_x^d\cap\O_Z(-kS\cap H)}).\]
\end{lemma}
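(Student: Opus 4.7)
The plan is to apply Green's hyperplane restriction theorem (Theorem~\ref{thm:Green-hyperplaen-restriction}) to a suitable homogeneous quotient of the tangent cone algebra of $Z$ at $x$. Consider the associated graded ring $R = \bigoplus_{d\ge 0} \mf{m}_x^d/\mf{m}_x^{d+1}$, which is the coordinate ring of the tangent cone $C_xZ$ and is an integral domain by Lemma~\ref{integral-tangent-cone}. Let $J\subset R$ be the initial ideal of $\O_Z(-kS)$, whose degree $d$ component is
\[
J_d = \frac{\mf{m}_x^d \cap \O_Z(-kS) + \mf{m}_x^{d+1}}{\mf{m}_x^{d+1}},
\]
and put $\bar{R} := R/J$. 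Then $\bar{R}_d = \mf{m}_x^d/(\mf{m}_x^{d+1} + \mf{m}_x^d\cap\O_Z(-kS))$, so $\dim\bar{R}_d = H_k(d)$.

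Next, take $f \in \mf{m}_x$ to be a local equation for $H\cap Z$ at $x$. Since $H\subset \mb{P}^N$ is general, the initial form $\IN(f)\in R_1 = \mf{m}_x/\mf{m}_x^2$ is a general linear form, and so is its image $\overline{\IN(f)}$ in $\bar{R}_1$. Theorem~\ref{thm:Green-hyperplaen-restriction} then gives
\[
\dim\bigl(\bar{R}/(\overline{\IN(f)})\bigr)_d \le (\dim\bar{R}_d)_{<d>} = H_k(d)_{<d>}.
\]
Unwinding definitions yields
\[
\bigl(\bar{R}/(\overline{\IN(f)})\bigr)_d = \frac{\mf{m}_x^d}{\mf{m}_x^{d+1} + \mf{m}_x^d\cap\O_Z(-kS) + f\mf{m}_x^{d-1}}.
\]
On the other hand, identifying $\O_{Z\cap H, x} \cong \O_{Z,x}/(f)$ and using that for general $H$ the ideal sheaf $\O_{Z\cap H}(-kS\cap H)$ is obtained as the image of $\O_Z(-kS)$ in $\O_{Z\cap H}$, two applications of the second isomorphism theorem rewrite
\[
h_k(d) = \dim \frac{\mf{m}_x^d}{\mf{m}_x^{d+1} + \mf{m}_x^d\cap\bigl(\O_Z(-kS)+(f)\bigr)}.
\]

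To conclude, I observe that both $\mf{m}_x^d\cap\O_Z(-kS)$ and $f\mf{m}_x^{d-1}$ sit inside $\mf{m}_x^d\cap(\O_Z(-kS)+(f))$, so the denominator for $h_k(d)$ contains the denominator for $(\bar{R}/(\overline{\IN(f)}))_d$; consequently the latter quotient surjects onto the former, and
\[
h_k(d) \le \dim\bigl(\bar{R}/(\overline{\IN(f)})\bigr)_d \le H_k(d)_{<d>}.
\]
The main obstacle is the rewriting of $h_k(d)$ carried out above: one must verify that $\O_{Z\cap H}(-kS\cap H)$ genuinely coincides with the image of $\O_Z(-kS)$ for a general hyperplane $H\subset \mb{P}^N$ (where $S$ and $H\cap Z$ meet properly), and then execute the second-isomorphism-theorem bookkeeping cleanly enough to peel off the factor of $(f)$. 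The integral-domain property of $R$ supplied by Lemma~\ref{integral-tangent-cone} is not strictly needed for the final inclusion comparison, but it guarantees that $\IN(f)$ is a nonzerodivisor in $R$ and simplifies identities such as $\mf{m}_x^d\cap (f) = f\mf{m}_x^{d-1}$ that make the bookkeeping transparent.
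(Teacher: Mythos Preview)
Your approach is essentially the same as the paper's: apply Green's hyperplane restriction theorem to the homogeneous $k$-algebra $\bar R=\mathrm{gr}_{\mf m_x}(\O_{Z,x})/\IN(\O_Z(-kS))$, then compare the quotient by $\overline{\IN(f)}$ with the corresponding graded piece on $Z\cap H$.

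The one point that deserves correction is your claim that $\O_{Z\cap H}(-k(S\cap H))$ \emph{coincides} with the image of $\O_Z(-kS)$ in $\O_{Z\cap H}$. In general only the containment
\[
\frac{\mf p^{[k]}+(f)}{(f)}\ \subseteq\ \mf q^{[k]}
\]
holds (image $\subseteq$ reflexive hull), and this is exactly what the paper proves: away from the isolated singular point $x$ the divisor $S$ is Cartier, so locally $\mf p^{[k]}=(s^k)$ and the inclusion is obvious on $U\setminus\{x\}$; then one uses that sections of a reflexive sheaf are unchanged upon removing a codimension~$2$ subset. Fortunately, containment is all you need: it makes the \emph{actual} denominator defining $h_k(d)$ at least as large as the one you wrote down, so the actual $h_k(d)$ is at most your computed value, and your chain of inequalities still yields $h_k(d)\le H_k(d)_{<d>}$. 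You should therefore replace ``coincides with'' by ``contains'' and supply the short reflexive-sheaf argument above; with that adjustment your proof is complete and matches the paper's.
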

\begin{proof}Denote by $(R, \mf{m}_R)$ the local ring of $Z$ at $x$ and $(T, \mf{m}_T)$ the local ring of  $Z\cap H$  at $x$. Let $\mf{p}$ be the defining ideal of $S$ in $R$, $\mf{h}=(h)$ the defining ideal of $H\cap Z$ in $R$, and $\mf{q}:=\mf{p}T$ be the defining ideal of $H\cap S$ in $T$.  Note that $T=R/\mf{h}$ and $\mf{q}=(\mf{p}+\mf{h})/{\mf{h}}$.  Since locally $\mc{O}_{S,x}\cong R/\mf{p}$ is Cohen Macaulay,  then, for a general hyperplane ideal $\mf{h}$, the ring $R/{\mf{h}+\mf{p}}\cong T/\mf{q}$ is also Cohen Macaulay.  Hence $\mf{q}$ defines an effective Weil divisor on $Z\cap H$.
By our assumption, $\spec(T)$ is $\mb{Q}$--factorial, hence $Z(\mf{q})$ is $\mb{Q}$-Cartier. 
Denote by \[gr_{\mf{m}_R}(R)=\bigoplus\limits_{i=0}^{\infty}\mf{m}_R^i/\mf{m}_R^{i+1}\] and 
\[gr_{\mf{m}_T}(T)=\bigoplus\limits_{i=0}^{\infty}\mf{m}_T^i/\mf{m}_T^{i+1}\] the associated graded algebras.  By Lemma \ref{integral-tangent-cone}, we know that $gr_{\mf{m}_R}(R)$ is an integral domain.  Then $gr_{\mf{m}_T}(T)=gr_{\mf{m}_R/\mf{h}}(R/\mf{h})=gr_{\mf{m}_R}(R)/\IN(\mf{h})$, 
where $\IN(\mf{h})=(\IN(h))$ means the initial ideal of $\mf{h}$. Denote by $\mf{p}^{[k]}$ and $\mf{q}^{[k]}$ be the $k$-th reflexive powers of $\mf{p}$ and $\mf{q}$.   

Under the above setup, we see that 
\[H_k(d)=\length(\frac{\mf{m}_R^d}{\mf{m}_R^{d+1}+\mf{m}_R^d\cap \mf{p}^{[k]}})=H(gr_{\mf{m}_R}(R)/{\IN(\mf{p}^{[k]})}, d)\]
and 
\[h_k(d)=\length(\frac{\mf{m}_T^d}{\mf{m}_T^{d+1}+\mf{m}_T^d\cap \mf{q}^{[k]}})=H(gr_{\mf{m}_T}(T)/{\IN(\mf{q}^{[k]})}, d).\]

Apply Theorem \ref{thm:Green-hyperplaen-restriction} to $gr_{\mf{m}_R}(R)$, $\IN(h)$ and $\mf{j}$, we know that 
\[H_k(d)_{<d>}\geq \tilde{h}_k(d)=H(gr_{\mf{m}_R}(R)/(\IN(\mf{p}^{[k]})+\IN(h)), d).\] 

We claim that $\tilde{h}_k(d)\geq h_k(d)$. 
To prove the claim, it suffices to show that 
\[\frac{\IN(\mf{p}^{[k]})+(\IN(h))}{(\IN(h))}\to \IN(\mf{q}^{[k]})\]
is injective for any $k$. Since 
\[\frac{\IN(\mf{p}^{[k]})+(\IN(h))}{(\IN(h))}\subset \IN\left(\frac{\mf{p}^{[k]})+(h)}{(h)}\right)\]
and 
\[\frac{\mf{p}^{[k]}+(h)}{(h)}\subset \mf{q}^{[k]}=\left(\frac{\mf{p}+(h)}{(h)}\right)^{[k]},\]
we see that the claim holds.

The last inequality $\frac{\mf{p}^{[k]}+(h)}{(h)}\subset \mf{q}^{[k]}$ follows from the fact that $h$ is general. Let $Z_0$ be the singular locus of $Z$. Since $Z$ is normal, then $\dim Z_0\leq 1$. For a general hyperplane $H$ passing through $x$, it doesn't contain $Z_0$ or $S$. Hence $H\cap Z_0$  consists of only finitely many points. Let $U$ be an open neighborhood of $x$. After shrinking $U$, we may assume that on $U$, $H\cap Z_0=\{x\}$ and the only singular point of $S$ is $x$. Near any point $y\neq x$ in $H\cap S\cap U$, the divisor $S$ is Cartier and locally defined by an equation $s$. Therefore, locally at $y$, we have $\mf{p}^{[k]}=(s^k)$ and $\mf{q}=(\bar{s})$ where $\bar{s}$ the image of $s$ in $T$. This implies that the inequality $\frac{\mf{p}^{[k]}+(h)}{(h)}\subset \mf{q}^{[k]}$ holds on the open set $U\cap S\setminus \{x\}$. Recall that the for reflexive sheaves, the section doesn't change when removing a codimensional $2$ subset. We have $\mf{q}^{[k]}(U\cap S)=\mf{q}^{[k]}(U\cap S\setminus\{x\})$. We thus have $\frac{\mf{p}^{[k]}+(h)}{(h)}\subset \mf{q}^{[k]}$ in $T$.
\end{proof}

\begin{lemma}\label{lengthdiv}
Let $Z$ be a normal variety of dimension 3 and $S$ be a prime Weil divisor on $Z$ with multiplicity $m'=\mult_xS$. Assume that $\ord_x$ is a discrete valuation on $Z$.  Then for any rational numbers $t, r\geq0$ and sufficiently large divisible $k$,  the function
\[l(t,r):=\length( \dfrac{\mf{m}_x^{kt}}{\mf{m}_x^{kt+1}+\mf{m}_x^{kt}\cap \O_X(-krS)})\]
is a polynomial of $k$ of degree $2$ whose leading coefficient is a function $h(t,r)$ satisfying 
\begin{equation}\label{eq:h(s,t)}
h(t,r)\geq \begin{cases}
4tr-8r^2 & t\geq 4r\\
8r^2-(t-4r)^2 & 4r> t\geq 2r\\
4r^2-3(t-2r)^2 & 2r> t\geq \frac{4}{3}r\\
\frac{3}{2}t^2 & \frac{4}{3}r> t\geq 0
\end{cases}.
\end{equation}
\end{lemma}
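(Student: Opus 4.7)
The plan is to first verify the polynomial behavior of $l(t,r)$ in $k$ and then establish the piecewise lower bound on its leading coefficient $h(t,r)$ by splitting into the two regimes $t<\tfrac{4}{3}r$ and $t\geq\tfrac{4}{3}r$, which require essentially different techniques. That $l(t,r)$ is eventually a polynomial of degree $\dim Z-1=2$ in $k$ follows from viewing $l(t,r)$ as a graded piece of the Hilbert function of a finitely generated graded module over the tangent cone $gr_{\m_x}\O_Z$, which by Lemma \ref{integral-tangent-cone} is an integral domain of Krull dimension $3$.

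In the regime $0\leq t<\tfrac{4}{3}r$, I would argue that $\m_x^{kt}\cap\O_Z(-krS)\subset\m_x^{kt+1}$ for sufficiently large divisible $k$, reducing the problem to computing $\dim_k\m_x^{kt}/\m_x^{kt+1}$ alone. This containment amounts to showing that every section of $\O_Z(-krS)$ vanishes to order at least $kt+1$ at $x$, and the threshold $\tfrac{4}{3}$ arises from the structure of the hyperplane section $\bar Z=Z\cap H$: the local class group at $x$ is $\mathbb{Z}/3$, and a fundamental-cycle computation on the unique $(-3)$-exceptional curve of the minimal resolution of the ordinary triple point pins down the order of vanishing of sections of $\O_{\bar Z}(-krS|_H)$. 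Granting the containment, the minimal multiplicity assumption ($\mult_xZ=3$, embedding dimension $5$) gives the tangent cone Hilbert function leading term $\dim_k\m_x^n/\m_x^{n+1}=\tfrac{3}{2}n^2+O(n)$, hence $h(t,r)\geq\tfrac{3}{2}t^2$.

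In the regime $t\geq\tfrac{4}{3}r$ I would reduce to the surface $\bar Z$ via Lemma \ref{lemma:Green-Application}. Since $3(S\cap H)$ is Cartier on $\bar Z$ (index $3$), Lemma \ref{lengthCartierdiv} applied to $3(S\cap H)$ with coefficient $r/3$ gives an explicit asymptotic lower bound for the surface-level length function. Transferring back to $Z$ via the Macaulay-transform inequality of Lemma \ref{lemma:Green-Application}, accumulated across the relevant degree range, produces the three piecewise-quadratic bounds: the pure Cartier formula $4r(t-2r)$ for $t\geq 4r$, and the refined $8r^2-(t-4r)^2$ and $4r^2-3(t-2r)^2$ in the intermediate regimes, obtained by blending the tangent-cone contribution with the Cartier correction coming from the fixed component $3(S\cap H)$.

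The main obstacle is the surface-to-$3$-fold transfer: the pointwise Macaulay inequality $h_k(d)\leq H_k(d)_{<d>}$ is asymptotically weak in $k$ (the left side is linear in $k$, the right side quadratic), so recovering a quadratic lower bound on $h(t,r)$ requires carefully accumulating the surface estimate across degrees $d\sim kt$, essentially integrating against the natural weight from $\binom{d}{2}$. Carrying out this integration together with the explicit surface-level computation of the length function for reflexive powers of a Weil divisor of index $3$ on the ordinary triple point is the technical heart of the argument.
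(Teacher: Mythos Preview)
Your central claim about the Macaulay transfer is wrong, and this derails the plan for $t\geq\frac{4}{3}r$. With $d=kt$, the quantity $H_k(d)_{<d>}$ is \emph{linear} in $k$, not quadratic. Indeed, since $H_k(d)\leq\frac{3}{2}d^2$ forces $c(d)\leq d+2$, the Macaulay representation has at most three indices with $c(i)=i+2$ (call this number $x$) and some number $y$ with $c(i)=i+1$; then $H_k(d)\sim\frac{x}{2}d^2+yd-\frac{y^2}{2}$ while $H_k(d)_{<d>}\sim xd+y$. Both $h_k(d)$ and $H_k(d)_{<d>}$ are therefore $O(k)$, so the inequality from Lemma~\ref{lemma:Green-Application} is already sharp at leading order and no ``accumulation across degrees'' or ``integration against $\binom{d}{2}$'' is needed or even meaningful.

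The paper's argument is instead a single-degree combinatorial optimization. From Lemma~\ref{lengthCartierdiv} on the surface $Z\cap H$ (with $\mult_x(S\cap H)=4$) one gets $LT_k(h_k(d))\geq\min\{4k,3d\}$; Green's inequality then gives $xd+y\geq\min\{4k,3d\}$ at leading order, while the tangent-cone bound gives $x\leq 3$ and $x+y\leq d$. Minimizing $\frac{x}{2}d^2+yd-\frac{y^2}{2}$ under these constraints is elementary case analysis on $x\in\{0,1,2,3\}$, and the four pieces in the statement correspond exactly to the minimizing $x$ in each $t$-range. You have all the inputs (Lemmas~\ref{lengthCartierdiv} and~\ref{lemma:Green-Application}) but missed the mechanism that converts them into a lower bound on $H_k(d)$.

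For $t<\frac{4}{3}r$, your containment idea $\mf{m}_x^{kt}\cap\O_Z(-krS)\subset\mf{m}_x^{kt+1}$ is a different route; it may well be true, but your justification via the hyperplane section controls only $\ord_x$ on $\bar Z$, not on the $3$-fold $Z$, so a lifting step is missing. The paper avoids this by handling this regime with the same Macaulay analysis: when $3d\leq 4k$ the constraint $xd+y\geq 3d$ forces $x=3$ (or $x=2$, $y=d$), yielding $H_k(d)=\frac{3}{2}d^2$ at leading order.
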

\begin{proof}
Set $d=kt$ and $k'=kr$.  We first let $r=1$. Applying Lemma \ref{lengthCartierdiv} to  $Z\cap H$ and $S\cap H$,  we see that \[
LT_k(h_{k}(d))\geq \min\{4k, 3d\},
\]
where $LT_k$ means the leading term with respect to $k$.
Note that $LT_k(H_{k}(d))\leq  \frac{3}{2}d^2$. Write
\[H_{k}(d)={c(d)\choose d}+{c(d-1)\choose d-1}+\cdots + {c(1)\choose 1}.
\]
Then $c(d)\leq d+2$, otherwise $\deg LT_k(H_{k}(d))\geq 3$.  Let $x$ be the number of $c(i)$ such that $c(i)= i+2$ and $y$ be the number of $c(i)$ such that $c(i)=i+1$. Since  $LT_k(H_{k}(d))\leq \frac{3}{2}d^2$, then $x\leq 3$. 
By Lemma \ref{lemma:Green-Application},  we know that $H_{k}(d)_{<d>} \geq h_{k}(d)$. Hence $LT_k(H_{k}(d)_{<d>}) \geq \min\{4k, 3d\}$.
In particular, the following inequalities hold
\begin{gather*}
d\geq x+y, \quad\quad LT_k(H_{k}(d)_{<d>})=LT_k(xd+y)\geq \min\{4k, 3d\}\\
LT_k(H_{k}(d))=LT_k(x\frac{d^2}{2}+yd-\frac{y^2}{2})\geq \frac{3}{2}d^2
\end{gather*}

\begin{enumerate}
\item Suppose that $t\geq 4$. We see that $\min\{4k, 3d\}=4k$ and  $d \geq LT_k(y) \geq \max\{4k-xd, 0\}$. Note that $yd-\frac{y^2}{2}$ is increasing for $0\leq y\leq d$.  If $x\geq 1$, then $\max\{4k-xd, 0\}=0$ and \[LT_k(H_{k}(d))\geq \frac{d^2}{2}=\frac{t^2}{2}k^2.\] If $x=0$, then $\max\max\{4k-xd, 0\}=4k$ and  \[LT_k(H_{k}(d))\geq LT_k(4kd-\frac{(4k)^2}{2})=(4t-8)k^2.\]
Since $\frac{t^2}{2}\geq 4t-8$,  then  $h(t, 1)\geq 4t-8$.

\item Suppose that $4\geq t\geq 2$. We see that $\min\{4k, 3d\}=4k$ and  $d \geq LT_k(y) \geq \max\{4k-xd, 0\}$. Then $d\geq 4k-xd$  which implies that $x\geq 1$.  If $x\geq 2$, then $\max\{4k-xd, 0\}=0$ and \[LT_k(H_{k}(d))\geq d^2=t^2k^2.\] If $x=1$, then $\max\{4k-d, 0\}=4k-d$ and 
\[LT_k(H_{k}(d))\geq LT_k(\frac{d^2}{2}+ (4k-d)d-\frac{(4k-d)^2}{2})=(8-(t-4)^2)k^2\]
Since $t^2\geq 8-(t-4)^2$, then $h(t, 1)\geq 8-(t-4)^2$.

\item Suppose that $2\geq t\geq 4/3$. We see that $\min\{4k, 3d\}=4k$ and $x\geq 2$. If $x=3$, then $LT_k(H_{k}(d))=\frac{3}{2}d^2$. If $x=2$, then $LT_k(y)\geq \max\{4k-2d, 0\}=4k-2d$ and 
\[LT_k(H_{k}(d))\geq LT_k(\frac{d^2}{2}+ (4k-2d)d-\frac{(4k-2d)^2}{2})= (4-3(t-2)^2)k^2.\]
Since $\frac{3}{2}t^2\geq 4-3(t-2)^2$, then $h(t, 1)\geq 4-3(t-2)^2$.

\item Suppose that $\frac{4}{3}\geq t\geq 0$. We see that $\min\{4k, 3d\}=3d$ and $x\geq 2$. If $x=3$, then $LT_k(H_{k}(d))=\frac{3}{2}d^2$. If $x=2$, then $LT_k(y)=d$ and  $LT_k(H_{k}(d)_{<d>}=\frac{3}{2}t^2k^2$.
\end{enumerate}

Replacing $k$ by $k'=kr$ and $t$ by $t'=\frac{t}{r}$, we will obtain the lower bounds for $h(t, r)$ as given in \eqref{eq:h(s,t)}.

\end{proof}

\begin{prop}\label{weildiv} Let $A$ be an effective ample $\mb{Q}$--Cartier divisor and $S$ be a prime Weil divisor on $Z$. Assume that the $\ord_x$ is a discrete valuation on $Z$. Write $\Delta=sS+\Delta'$, where $S\not\subset\supp(\Delta')$. If there is a rational number $t_0>0$ such that $\psi_S(t_0)\geq 1-s$, 
then for any $\gamma\geq t_0$ such that ${\vol(\gamma,A)}\geq0$, we have
\[\vol(0, \gamma, A)\leq \int_0^{\gamma}mnt^{n-1}\d t - \int_{t_0}^{\gamma} h(t,1-s)\d t.\]
\end{prop}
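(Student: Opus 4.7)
The plan is to filter $H^0(Z,kA)$ by order of vanishing at $x$, combining a standard Hilbert--Samuel upper bound on the graded pieces with an improvement obtained from the forced vanishing along $S$. Concretely, I would write
\[\vol(0,\gamma,A)=\lim_k\frac{n!}{k^n}\sum_{d=0}^{\lceil k\gamma\rceil-1}a_{k,d},\qquad a_{k,d}:=\dim\bigl(H^0(Z,kA\otimes\m_x^d)/H^0(Z,kA\otimes\m_x^{d+1})\bigr),\]
where $a_{k,d}$ equals the dimension of the image of the evaluation map into $\m_x^d/\m_x^{d+1}$. The unconditional bound $a_{k,d}\leq\length(\m_x^d/\m_x^{d+1})\sim\frac{m}{(n-1)!}d^{n-1}$ integrates asymptotically to $\int_0^\gamma mnt^{n-1}\,\d t$, which is the first term on the right of the claim.

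The improvement enters for $d\geq kt_0$. Since $|kA\otimes\m_x^{\lceil kt\rceil}|$ shrinks as $t$ grows, $\psi_S(t)$ is nondecreasing in $t$, so $\psi_S(d/k)\geq\psi_S(t_0)\geq 1-s$. Fixing $\epsilon>0$ and taking $k$ large and divisible, each section in $H^0(Z,kA\otimes\m_x^d)$ factors through $\O_Z(-\lceil k(1-s-\epsilon)\rceil S)$, hence its image in $\m_x^d/\m_x^{d+1}$ lies in the subspace $(\m_x^d\cap\O_Z(-\lceil k(1-s-\epsilon)\rceil S))/\m_x^{d+1}$. Consequently
\[a_{k,d}\leq\length(\m_x^d/\m_x^{d+1})-\length\!\left(\frac{\m_x^d}{\m_x^{d+1}+\m_x^d\cap\O_Z(-\lceil k(1-s-\epsilon)\rceil S)}\right),\]
and Lemma \ref{lengthdiv} identifies the second length as $l(d/k,1-s-\epsilon)$, whose leading coefficient in $k$ is $h(d/k,1-s-\epsilon)$.

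Summing over $d=0,\dots,\lceil k\gamma\rceil-1$, multiplying by $n!/k^n$, and passing to the limit turns the Riemann sums into integrals, yielding
\[\vol(0,\gamma,A)\leq\int_0^\gamma mnt^{n-1}\,\d t-\int_{t_0}^\gamma h(t,1-s-\epsilon)\,\d t.\]
Letting $\epsilon\to 0$, using continuity of $h(t,r)$ in $r$ from its piecewise-polynomial description in Lemma \ref{lengthdiv}, gives the desired inequality.

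The main obstacle I anticipate is ensuring the asymptotic lower bound $\ord_S F_{k,d/k}(A)/k\geq 1-s-\epsilon$ holds \emph{uniformly} for $d/k$ in the whole interval $[t_0,\gamma]$ as $k\to\infty$, rather than just at a single value of $t$; a compactness/monotonicity argument based on the nondecreasing nature of $\psi_S$ and the defining limit should suffice, but must be set up with care because $d$ varies with $k$. The parallel technicality---passing from the Cartier case of Lemma \ref{lengthCartierdiv} to the Weil case via the Green hyperplane restriction in Lemma \ref{lemma:Green-Application}---is already folded into Lemma \ref{lengthdiv} and can be invoked as a black box.
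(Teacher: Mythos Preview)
Your proposal is correct and follows essentially the same route as the paper: filter by order of vanishing, bound each graded piece by the Hilbert--Samuel term minus the length $l(t,1-s)$ coming from the forced vanishing along $S$ (using monotonicity of $\psi_S$ to propagate the hypothesis from $t_0$ to all $t\geq t_0$), then pass to the Riemann integral. The only difference is cosmetic: by subadditivity of $k\mapsto\ord_S F_{k,t}(A)$ the limit defining $\psi_S(t)$ is in fact an infimum, so $\ord_S F_{k,t}(A)\geq k\psi_S(t)\geq k(1-s)$ holds for every $k$ and your $\epsilon$-hedge (and the uniformity worry you flag) is unnecessary, which is exactly how the paper proceeds.
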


\begin{proof}
Let $k$ be a sufficiently divisible integer and $t$ be a rational number. We may assume that $kt$ is an integer, denoted by $j$. Since $F_{k,t}(A)$ is the fixed part of $|\O_Z(kA)\otimes \mathfrak{m}_x^{j}|$, then  \[H^0(Z, \O_Z(kA)\otimes \mathfrak{m}_x^{j})=H^0(Z, \O_Z(kA)\otimes (\O_Z(-F_{k,t}(A))\cap\mathfrak{m}_x^{j}))\] by definition of fixed part. Recall that the coefficient of $S$ in $F_{k,t}(A)$ is greater or equal to $k\cdot\psi_S(t)$. We have 
\[ H^0(Z, \O_Z(kA)\otimes (\O_Z(-k\cdot\psi_S(t)\cdot S)\cap\mathfrak{m}_x^{j}))=H^0(Z, \O_Z(kA)\otimes \mathfrak{m}_x^{j})\] 
Then we have 

\[\begin{split}
& h^0(Z, \O_Z(kA)\otimes\mathfrak{m}_x^j)-h^0(Z, \O_Z(kA)\otimes\mathfrak{m}_x^{j+1})\\
\leq &\dim \im\left( H^0(Z, \O_Z(kA)\otimes\mathfrak{m}_x^j)\to H^0(Z, \O_Z(kA)\otimes \frac{\mathfrak{m}_x^j}{\mathfrak{m}_x^{j+1}}) \right)\\
=&\dim \im\left(H^0(Z, \O_Z(kA)\otimes (\O_Z(-k\cdot\psi(t)\cdot S)\cap\mathfrak{m}_x^{j}))\phantom{\frac{\mathfrak{m}_x^j}{\mathfrak{m}_x^{j+1}}}\right.\\&\hcell{\hspace{\fill}\left. \to  H^0(Z, \O_Z(kA)\otimes \frac{\mathfrak{m}_x^j}{\mathfrak{m}_x^{j+1}})\right)}\\
\leq & h^0(Z, \O_Z(kA)\otimes \frac{\mathfrak{m}_x^j}{\mathfrak{m}_x^{j+1}})-h^0(Z, \O_Z(kA)\otimes \frac{\mathfrak{m}_x^j}{\mathfrak{m}_x^{j+1}+\O_Z(-k\cdot\psi(t)\cdot S)\cap\mathfrak{m}_x^{j}})\\ =&h^0(Z, \frac{\mathfrak{m}_x^j}{\mathfrak{m}_x^{j+1}})-h^0(Z, \frac{\mathfrak{m}_x^j}{\mathfrak{m}_x^{j+1}+\O_Z(-k\cdot\psi(t)\cdot S)\cap\mathfrak{m}_x^{j}}).\\
\end{split}
\]

Recall that $j=kt$ for sufficiently large $k$. Then $h^0(Z, \mathfrak{m}_x^j/\mathfrak{m}_x^{j+1})$ is a polynomial in $k$ with leading term $\frac{m}{(n-1)!}(kt)^{n-1}$. Since $\psi_S(t)\geq \psi_S(t_0)\geq 1-s$ for $t\geq t_0$, then we have
\[h^0(Z, \frac{\mathfrak{m}_x^j}{\mathfrak{m}_x^{j+1}+\O_Z(-k\cdot\psi(t)\cdot S)\cap\mathfrak{m}_x^{j}})\geq h^0(Z, \frac{\mathfrak{m}_x^j}{\mathfrak{m}_x^{j+1}+\O_Z(-k\cdot(1-s)\cdot S)\cap\mathfrak{m}_x^{j}})\]
for $t\geq t_0$.

 By Lemma \ref{lengthdiv}, we know that \[h^0(Z, \frac{\mathfrak{m}_x^j}{\mathfrak{m}_x^{j+1}+\O_Z(-k\cdot(1-s)\cdot Z)\cap\mathfrak{m}_x^{j}})\geq h(t,1-s)k^{n-1}+o(k).\]
 
 Therefore, by taking Riemann sum, we get
\[\begin{split}\vol(0,\gamma, A) &\leq \lim_{k\to\infty}
\dfrac{\sum\limits_{j=0}^{k\gamma}\frac{mk^{n-1}}{(n-1)!}(\frac{j}{k})^{n-1} -
\sum\limits_{j=kt_0}^{k\gamma} h(\frac{j}{k},1-s)k^{n-1} -o(k) }{\frac{k^n}{n!}}\\
& = \int_0^{\gamma}mnt^{n-1}\d t - \int_{t_0}^{\gamma} h(t,1-s)\d t.
\end{split}
\]
\end{proof}

\newpage

\section{Mld of a rational surface singularity \label{sec:mld-def-surfaces}}
\centerline{by Jun Lu}
\begin{center}{\small\itshape Department of Mathematics\\ East China Normal University\\Shanghai, China\\
Email: jlu@math.ecnu.edu.cn
}
\end{center}

\begin{thm}
Let $(S, o)$ be an rational surface singularities with multiplicity $\mult_oS=m$.  Then the minimal log discrepancy of $S$ at $o$ is at most $\frac{2}{m}$.
\end{thm}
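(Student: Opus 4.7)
My plan is to work on the minimal resolution $\pi : \tilde S \to S$ and extract the desired bound from the intersection numerics of the fundamental cycle. Let $E_1, \ldots, E_r$ be the exceptional components. Since $(S,o)$ is a rational singularity, by Artin these are smooth rational curves arranged in a tree, and the fundamental cycle $Z = \sum z_i E_i$ satisfies $z_i \geq 1$, $Z \cdot E_i \leq 0$, $p_a(Z)=0$, and $-Z^2 = m$. Writing $K_{\tilde S} = \pi^* K_S + \sum d_i E_i$, the log discrepancies of the $E_i$ with respect to $(S,0)$ are $a_i := 1 + d_i$; since $\mld_o(S) \leq a_j$ for every $j$, it is enough to produce one index $j$ with $a_j \leq 2/m$.

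The pivotal numerical input I extract is $\sum_i \ell_i \leq m$, where $\ell_i := -Z \cdot E_i \geq 0$; this falls out of $m = -Z^2 = \sum_i z_i \ell_i$ together with $z_i \geq 1$. The second ingredient is the identity $\sum_j a_j \ell_j = \sum_j \ell_j - m + 2$, which I obtain by computing $K_{\tilde S} \cdot Z$ in two ways: adjunction plus $p_a(Z) = 0$ gives $K_{\tilde S} \cdot Z = -2 - Z^2 = m - 2$, while the projection formula (together with $\pi^* K_S \cdot E_j = 0$) gives $K_{\tilde S} \cdot Z = \sum_j d_j (E_j \cdot Z) = -\sum_j d_j \ell_j$; substituting $d_j = a_j - 1$ yields the identity.

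Combining the two inputs, for $m \geq 2$ a one-line rearrangement turns $\sum_j a_j \ell_j = \sum_j \ell_j - m + 2$ and $\sum_j \ell_j \leq m$ into $\sum_j a_j \ell_j \leq \tfrac{2}{m} \sum_j \ell_j$; since $-Z^2 = m \geq 1$ forces some $\ell_j > 0$, this weighted-average inequality produces at least one $a_j \leq 2/m$, which is what we want. The boundary cases are handled by inspection: $m=1$ means $o$ is a smooth point with $\mld_o(S)=2$, and $m=2$ is precisely the equality case of the estimate.

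I do not foresee a serious obstacle, since every step rests on standard fundamental-cycle facts due to Artin. The most delicate point is ensuring $z_i \geq 1$ holds for \emph{every} exceptional component (so that $\sum \ell_i \leq \sum z_i \ell_i = m$), but this is automatic from connectedness of the exceptional set and the defining property of $Z$; beyond that, the computation is just a double evaluation of $K_{\tilde S} \cdot Z$ and a weighted-average argument.
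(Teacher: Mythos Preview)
Your proof is correct and takes a genuinely different route from the paper's. Both arguments work on the minimal resolution and use the same two Artin facts ($p_a(Z)=0$, hence $K\cdot Z=m-2$, and $-Z^2=m$), but they diverge from there. The paper argues by contradiction: assuming every log discrepancy exceeds $2/m$, it forms the auxiliary divisor $H=K_{\tilde S/S}+(1-\tfrac{2}{m})E$, shows $H\cdot Z=0$ via the identity $(K+(1-\tfrac{2}{m})Z)\cdot Z=0$, deduces $Z=E$ from negative definiteness, and then derives a contradiction from nefness of $H$. Your argument is direct: from $m=\sum z_i\ell_i\geq\sum\ell_i$ and $\sum a_i\ell_i=\sum\ell_i-(m-2)$ you get a weighted-average bound that immediately produces some $a_j\leq 2/m$. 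Your route is shorter and avoids the case split and the auxiliary divisor $H$; the paper's route, on the other hand, yields the structural byproduct that equality (or near-equality) forces $Z=E$, which could be of independent interest but is not needed for the bare inequality.
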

\begin{proof}
We may assume that $S$ is affine. 
Let $f: \tilde{S}\to S$ be the minimal resolution, $Z=\sum_{i=1}^ra_iE_i$ be the fundamental cycle, $K=K_{\tilde{S}/S}=\sum_{i=1}^rb_iE_i$ be the relative canonical divisor on $\tilde{S}$, $E=\sum_{i=1}^rE_i$. 

If $m=2$, then it is easy to check that $\mld\leq 1$.

 For $m\geq 3$, we will prove the theorem by contradiction. 
 
 Assume in the contrary that $\mld> \frac{2}{m}$. Then $b_i>\frac{2}{m}-1$ for each $i=1, \dots, r$ which implies that 
\[K+(1-\frac{2}{m})E>0.\]  
Set $$H=K +\left(1-\frac{2}{m}\right)E>0.$$
Since $o$ is a rational singularity and $Z$ is the fundamental cycle which is anti-nef, then we see that $HZ\leq 0$ and $KZ=-2+m$. However, 
\[\begin{split}HZ&=\left(K+\left(1-\frac{2}{m} \right)Z\right)Z-\left(\left(1-\frac{2}{m} \right)(Z-E)\right)Z\\&=- \left(1-\frac{2}{m}\right) (Z-E)Z\geq 0.\end{split}\]
Therefore, $(Z-E)Z=0$ which implies that $EZ=-m$.

We claim that $Z-E=0$. Otherwise, if $Z-E>0$ then $(Z-E)^2<0$ by negative definiteness, then $m+E^2< 0$ equivalently $1+m+E^2\leq 0$. Then $KE=-2-E^2\geq m-1$.  But $m-2=ZK\geq EK\geq m-1$, a contradiction!  This proves that  $Z=E$ under the assumption that $\mld>\frac{2}{m}$.

Now since $E=Z$, $HE=0$, then for any irreducible component $\Gamma$ in $\mathrm{Supp}(H)$ we have $\Gamma E=0$. Hence, 
\[H\Gamma=(K +\left(1-\frac{2}{m}\right)E)\Gamma=K\Gamma=-\Gamma^2-2\geq 0\] which implies
that $H$ is nef, equivalently, $-H$ is anti-nef. Hence $-H\geq 0$ by negativity lemma. But it contradicts to the assumption that $H>0$. 
\end{proof}



\begin{thebibliography}{Amb99}

\bibitem[Amb99]{Ambro1999a}
Florin Ambro.
\newblock The adjunction conjecture and its applications.
\newblock {\em arXiv: 9903060}, 1999.

\bibitem[AS95]{Angehrn1995}
Urban Angehrn and Yum~Tong Siu.
\newblock Effective freeness and point separation for adjoint bundles.
\newblock {\em Invent. Math.}, 122(2):291--308, 1995.

\bibitem[BH93]{Bruns1993}
Winfried Bruns and J{\"u}rgen Herzog.
\newblock {\em Cohen-{M}acaulay rings}, volume~39 of {\em Cambridge Studies in
  Advanced Mathematics}.
\newblock Cambridge University Press, Cambridge, 1993.

\bibitem[Dem93]{Demailly1993}
Jean-Pierre Demailly.
\newblock A numerical criterion for very ample line bundles.
\newblock {\em J. Differential Geom.}, 37(2):323--374, 1993.

\bibitem[Ein97]{Ein1997}
Lawrence Ein.
\newblock Multiplier ideals, vanishing theorems and applications.
\newblock In {\em Algebraic geometry---{S}anta {C}ruz 1995}, volume~62 of {\em
  Proc. Sympos. Pure Math.}, pages 203--219. Amer. Math. Soc., Providence, RI,
  1997.

\bibitem[EL93]{Ein1993a}
Lawrence Ein and Robert Lazarsfeld.
\newblock Global generation of pluricanonical and adjoint linear series on
  smooth projective threefolds.
\newblock {\em J. Amer. Math. Soc.}, 6(4):875--903, 1993.

\bibitem[FG12]{Fujino2012}
Osamu Fujino and Yoshinori Gongyo.
\newblock On canonical bundle formulas and subadjunctions.
\newblock {\em Michigan Math. J.}, 61(2):255--264, 2012.

\bibitem[Fuj88]{Fujita1988}
Takao Fujita.
\newblock {\em Contribution to Birational Geometry of Algebraic Varieties: Open
  Problems; the 23rd International Symposium, Division of Mathematics, the
  Taniguchi Foundation; August 22-27, 1988, Katata}.
\newblock 1988.

\bibitem[Fuj93]{Fujita1993}
Takao Fujita.
\newblock {Remarks on Ein-Lazarsfeld criterion of spannedness of adjoint
  bundles of polarized threefolds}.
\newblock {\em arXiv: 9311013}, alg-geom, November 1993.

\bibitem[Hac14]{Hacon2014}
Christopher~D. Hacon.
\newblock On the log canonical inversion of adjunction.
\newblock {\em Proc. Edinb. Math. Soc. (2)}, 57(1):139--143, 2014.

\bibitem[Hei02]{Heier2002}
Gordon Heier.
\newblock Effective freeness of adjoint line bundles.
\newblock {\em Doc. Math.}, 7:31--42 (electronic), 2002.

\bibitem[Hel97]{Helmke1997}
Stefan Helmke.
\newblock On {F}ujita's conjecture.
\newblock {\em Duke Math. J.}, 88(2):201--216, 1997.

\bibitem[Hel99]{Helmke1999}
Stefan Helmke.
\newblock On global generation of adjoint linear systems.
\newblock {\em Math. Ann.}, 313(4):635--652, 1999.

\bibitem[HS06]{Huneke2006}
Craig Huneke and Irena Swanson.
\newblock {\em Integral closure of ideals, rings, and modules}, volume 336 of
  {\em London Mathematical Society Lecture Note Series}.
\newblock Cambridge University Press, Cambridge, 2006.

\bibitem[Kak00]{Kakimi2000}
Nobuyuki Kakimi.
\newblock {Freeness of adjoint linear systems on threefolds with terminal
  Gorenstein singularities or some quotient singularities}.
\newblock {\em The University of Tokyo. Journal of Mathematical Sciences},
  7(3):347--368, 2000.

\bibitem[Kaw97]{Kawamata1997}
Yujiro Kawamata.
\newblock On {F}ujita's freeness conjecture for {$3$}-folds and {$4$}-folds.
\newblock {\em Math. Ann.}, 308(3):491--505, 1997.

\bibitem[Kol93]{Kollar1993}
J{\'a}nos Koll{\'a}r.
\newblock Effective base point freeness.
\newblock {\em Math. Ann.}, 296(4):595--605, 1993.

\bibitem[Kol97]{Kollar1997}
J{\'a}nos Koll{\'a}r.
\newblock Singularities of pairs.
\newblock In {\em Algebraic geometry---{S}anta {C}ruz 1995}, volume~62 of {\em
  Proc. Sympos. Pure Math.}, pages 221--287, Providence, RI, 1997. Amer. Math.
  Soc.

\bibitem[Kol13]{Kollar2013}
J{\'a}nos Koll{\'a}r.
\newblock {\em Singularities of the minimal model program}, volume 200 of {\em
  Cambridge Tracts in Mathematics}.
\newblock Cambridge University Press, Cambridge, 2013.
\newblock With a collaboration of S{\'a}ndor Kov{\'a}cs.

\bibitem[Lee99]{Lee1999}
Seunghun Lee.
\newblock Remarks on the pluricanonical and the adjoint linear series on
  projective threefolds.
\newblock {\em Comm. Algebra}, 27(9):4459--4476, 1999.

\bibitem[Lip69]{Lipman1969}
Joseph Lipman.
\newblock Rational singularities, with applications to algebraic surfaces and
  unique factorization.
\newblock {\em Inst. Hautes \'Etudes Sci. Publ. Math.}, (36):195--279, 1969.

\bibitem[Rei88]{Reider1988}
Igor Reider.
\newblock Vector bundles of rank {$2$} and linear systems on algebraic
  surfaces.
\newblock {\em Ann. of Math. (2)}, 127(2):309--316, 1988.

\bibitem[Sal77]{Sally1977}
Judith~D. Sally.
\newblock On the associated graded ring of a local {C}ohen-{M}acaulay ring.
\newblock {\em J. Math. Kyoto Univ.}, 17(1):19--21, 1977.

\bibitem[YZ14]{Ye2014}
Fei Ye and Zhixian Zhu.
\newblock Global generation of adjoint line bundles on projective $5 $-folds.
\newblock {\em arXiv:1405.0678}, 2014.

\end{thebibliography}

\end{document}